\theoremstyle{plain}
\renewcommand{\theequation}{\arabic{section}.\arabic{equation}}
\renewcommand\thefigure{\thesection.\@arabic\c@figure}
\renewcommand{\thefigure}{\arabic{section}.\arabic{figure}}
\newtheorem{thm}{\bf Theorem}
\newenvironment{theorem}{\begin{thm}} {\end{thm}}
\newtheorem{cor}{\bf Corollary}
\newenvironment{corollary}{\begin{cor}} {\end{cor}}
\newtheorem{lmm}{\bf Lemma}
\theoremstyle{remark}
\newtheorem{rem}{\bf Remark}[section]
\theoremstyle{definition}
\numberwithin{table}{section}
\def \af {\alpha}
\def \bt {\beta}
\newcommand{\bs}[1]{\boldsymbol{#1}}
\renewcommand \wedge \times
\begin{document}
\bibliographystyle{plain}
\baselineskip 13pt

\title[Fractional Collocation Methods] {Well-Conditioned Fractional   Collocation Methods Using Fractional Birkhoff Interpolation Basis 
}
\author[Y. Jiao, \;  L. Wang \; $\&$ \; C. Huang]{Yujian Jiao${}^1$, \;\; Li-Lian Wang${}^{2*}$ \;\; and \;\; Can Huang${}^3$}

 \thanks{${}^1$Department of Mathematics, Shanghai Normal University, Shanghai 200234, P. R. China, and  Scientific Computing Key
Laboratory of Shanghai Universities. This author is supported in part by NSFC grants (No. 11171227 and No.  11371123), Natural Science Foundation of Shanghai 
(No.13ZR1429800), and the State Scholarship Fund of China  (No. 201308310188). \\
\indent ${}^{2*}$({\it Corresponding author: }lilian@ntu.edu.sg) Division of Mathematical Sciences, School of Physical
and Mathematical Sciences, Nanyang Technological University,
637371, Singapore. The research of this author is partially supported by Singapore MOE AcRF Tier 1 Grant (RG 15/12), Singapore MOE AcRF Tier 2 Grant (MOE 2013-T2-1-095, ARC 44/13) and Singapore A$^\ast$STAR-SERC-PSF Grant (122-PSF-007). \\
\indent ${}^3$School of Mathematical Sciences  and Fujian Provincial Key Laboratory on Mathematical Modeling \& High Performance Scientific Computing, Xiamen University, Fujian 361005, China. The research of this author is supported by National Natural Science Foundation of China under Grant 11401500.  \\
\indent The first and last authors thank the hospitality of the Division of Mathematical Sciences, School of Physical
and Mathematical Sciences, Nanyang Technological University, Singapore,  for hosting their visit. 
}
\begin{abstract} The purpose of this paper is twofold.  Firstly, we provide explicit and compact formulas for computing  both Caputo and (modified)  Riemann-Liouville (RL) fractional pseudospectral differentiation matrices (F-PSDMs) of any order at general Jacobi-Gauss-Lobatto (JGL) points.  
We show that in the Caputo case, it suffices to compute   F-PSDM of order $\mu\in (0,1)$ to compute  that of  any order $k+\mu$ with  integer $k\ge 0,$  while in the modified RL case, it is only  necessary to evaluate a fractional integral matrix of order $\mu\in (0,1).$ 
Secondly,  we introduce suitable fractional JGL  Birkhoff interpolation problems  leading to new  interpolation polynomial basis functions  with remarkable properties: (i)  the matrix generated from the new  basis yields the exact inverse of  F-PSDM at  ``interior" JGL points;  
(ii)  the matrix  of the highest fractional derivative in a collocation scheme under the new basis is diagonal;    and
(iii)  the resulted linear system is well-conditioned in the Caputo case, while in the modified RL case, the eigenvalues of the coefficient matrix are highly concentrated. In both cases,   the linear systems of the collocation schemes using the new basis   can solved by an iterative solver within  a few iterations. 
Notably, the inverse can be computed in a very stable manner,  so  this offers optimal preconditioners   for usual fractional collocation methods for fractional differential equations (FDEs). It is also noteworthy that the choice of certain special JGL points with parameters related to the order of the equations can ease the implementation.            
We highlight that the use of  the  Bateman's fractional integral formulas and fast  transforms between Jacobi polynomials with different parameters, are  essential for our algorithm development.       
\end{abstract}
\keywords{Fractional differential equations, Caputo fractional derivative, (modified) Riemann-Liouville fractional derivative, fractional Birkhoff interpolation, interpolation basis polynomials, well-conditioned collocation methods}
 \subjclass[2000]{65N35, 65E05, 65M70,  41A05, 41A10, 41A25}

\maketitle
 
\vspace*{-12pt} 
\section{Introduction}
 Fractional differential equations have been found   more realistic  in modelling a variety of  
physical phenomena, engineering processes, biological systems and financial products, such as  anomalous diffusion and non-exponential relaxation patterns,  viscoelastic   materials and among others. 
Typically, such scenarios involve long-range temporal cumulative memory  effects and/or long-range spatial interactions that 
can be more accurately described by  fractional-order  models (see, e.g.,  \cite{Pod99,Met.K00,Kil.S06,Diet10,DGLZ13b} and the references therein).

One  challenge  in numerical solutions  of FDEs resides  in that the underlying  fractional integral and derivative operators are global in nature. Indeed, it is not surprising to see the
finite difference/finite element methods based on ``local operations" leads to full and dense matrices (cf. \cite{Mee.T04,LAT04,Sun.W06,MST06,EHR07,Erv.R07,Tad.M07,JLZ13} and the references therein), which are expensive to compute  and invert.  It is therefore of importance to 
construct fast solvers by carefully analysing the structures of the matrices  (see, e.g., \cite{Wan.B12,lin2014preconditioned}).  This should be  in marked contrast with the situations when they are  applied to differential equations of integer order derivatives.  
In this aspect, the spectral method using global basis functions  appears to be  well-suited for non-local problems.  However, only limited efforts have been devoted  to  this very promising approach 
 (see, e.g.,  \cite{Li.X09,Li.X10,LZL12,zayernouri2013fractional,XuH14,CSW2014}), when compared with a large volume of literature on finite difference and finite element methods.

Another more distinctive challenge in solving FDEs  lies in that the intrinsic singular kernels  of  the fractional integral and derivative operators induce singular solutions and/or data.
Just to mention a simple FDE involving RL fractional derivatives order $\mu\in (0,1)$: 
$ \big(~\, {^{R}}\hspace*{-12pt}{}_{-1}D_{\!x}^\mu\, u\big)(x)=1$ for $x\in (-1,1),$ such that $u(-1)=0,$ whose  solution behaves like  $u(x)\sim (1+x)^{\mu}.$ Accordingly,  it only has a limited regularity in a usual Sobolev space, so the naive polynomial approximation  has  a poor convergence rate. 
 Zayernouri and Karniadakis \cite{zayernouri2013fractional} proposed to approximate such singular solutions by  Jacobi poly-fractonomials (JPFs), which were derived from  eigenfunctions of a fractional Sturm-Liouville operator. Chen, Shen and Wang \cite{CSW2014}  modified the generalised Jacobi functions (GJFs) introduced earlier in Guo, Shen and Wang \cite{Guo.SW09},  and  rigorously derived the approximation results in weighted Sobolev spaces involving  fractional derivatives.
 The JPFs turned out to be special cases of GJFs, and the GJF Petrov-spectral-Galerkin methods could achieve   truly spectral convergence for some prototypical FDEs. We also refer to \cite{wang2015high} for  interesting attempts to characterise the regularity of solutions to some special FDEs by Besov spaces.   It is also noteworthy that the analysis of  spectral-Galerkin approximation in \cite{Li.X09,Li.X10} was under the function spaces and notion in   \cite{Erv.R07}, and in \cite{JLZ13},  the finite-element method was analyzed for  the case with smooth source term but  singular solution. 

 It is known that by pre-computing the pseudospectral differentiation matrices (PSDMs),  
  the  collocation method enjoys  a  ``plug-and-play" function  with simply replacing derivatives by PSDMs, so it has remarkable advantages in dealing with variable coefficients and nonlinear PDEs . However, the practicers are usually plagued with the dense, ill-conditioned  linear systems, when compared with properly designed spectral-Galerkin approaches  (see, e.g.,  \cite{CHQZ06,ShenTangWang2011}).    The ``local" finite-element  preconditioners  (see, e.g., \cite{Kim.P97}) and  ``global" integration preconditioners   (see, e.g., \cite{coutsias1996integration,Greengard91,Hesthaven98,Elbarbary06,Wan.SZ14,WangZhang2}) were developed to overcome the ill-conditioning of the linear systems.  When it comes to FDEs, it is advantageous to use collocation methods, as the Galerkin approaches usually lead to full dense matrices as well.  
 Recently, the development of collocation methods for FDEs has attracted much attention (see, e.g., \cite{LZL12,zayernouri2014fractional,tian2014polynomial,fatone2014optimal}).  It was numerically testified in \cite{LZL12,zayernouri2014fractional} that for both Lagrange polynomial-based and JPF-based collocation methods, the condition number of the Caputo F-PSDM of order $\mu$ behaves like $O(N^{2\mu})$ which  is consistent with the integer-order  case.  However, it seems very difficult to construct preconditioners from finite difference and finite elements as they own involve full and dense matrices and suffer from ill-conditioning.   

The main purpose of this paper is to construct integration preconditioners and new basis functions for well-conditioned fractional collocation methods  from  some suitably defined  {\em fractional Birkhoff polynomial interpolation problems.}  In \cite{Wan.SZ14},  optimal integration preconditioners 
were  devised for PSDMs of integer order, which allows for stable implementation of collocation schemes even for thousands  of collocation points.  
 Following the spirit of  \cite{Wan.SZ14}, we introduce suitable  fractional Birkhoff interpolation problems at general JGL points  with respect to both Caputo  and (modified) Riemann-Liouville fractional derivatives (note: the RL fractional derivative is  modified by removing the singular factor so that it is  well defined at every collocation point).  As we will see, the extension is nontrivial and much more involved than the integer-order derivative case.  Here, we restrict our attention to the polynomial approximation, though the ideas and techniques can be extended to JPF- and GJF-type basis functions.   On the other hand, using a  suitable mapping, we can transform the FDE (e.g., the aforementioned example) and approximate the smooth solution of the transformed equation, which is alternative to the direct use of JPF or GJF approximation to achieve spectral accuracy for certain special FDEs.   
 
 We highlight the main contributions of this paper  in order. 
 \begin{itemize}
 \item From the fractional Birkhoff interpolation,  we derive  new interpolation basis polynomials with  remarkable properties:
 \begin{itemize}
 \item[(i)]  It provides a stable way to compute the exact inverse of Caputo and (modified) Riemann-Liouville fractional PSDMs associated with ``interior" JGL points.  This offers integral preconditioners for fractional collocation schemes using Lagrange interpolation basis polynomials. 
 \item[(ii)] Using the new basis, the matrix of the highest fractional derivative in a collocation scheme is identity,  and the F-PSDMs are not involved.   More importantly, 
 the resulted linear systems can be solved by an iterative method converging within a few iterations even for a very large number of collocation points. 
 \end{itemize}
 \item We propose a compact and systematic way to compute  Caputo and (modified) Riemann-Liouville F-PSDMs of any order at JGL points. In fact,  we can show that  the computation of F-PSDM of order $k+\mu$ with $k\in {\mathbb N}$ and $\mu\in (0,1)$  boils down to evaluating  (i)   F-PSDM of order $\mu$  in the Caputo case, and (ii) a modified fractional integral matrix of order $\mu$  in the Riemann-Liouville case.     
  Using the Bateman's fractional integral formulas and the connection problem, i.e.,  the transform between Jacobi polynomials with different parameters, we obtain the explicit formulas of these matrices. 
 \end{itemize}

The rest of the  paper is organised as follows. The next section is for some preparations.
 In Section \ref{sect:FPSDM}, we present algorithms for computing  Caputo and (modified) Riemann-Louville F-PSDMs.   In Sections \ref{sect:caputoinverse}-\ref{sect:mRL}, we introduce fractional Birkhoff polynomial interpolation and compute new basis functions. Then we are able to stably compute the inverse of F-PSDMs at ``interior" JGL points and construct well-conditioned collocation schemes.  The final section is for numerical results and concluding remarks.

\section{Preliminaries}\label{sect:prem}
In this section, we make necessary preparations for subsequent discussions.  More precisely,  we first recall the  definitions of fractional integrals and derivatives.
We then collect  some important properties of  Jacobi polynomials and the related Jacobi-Gauss-Lobatto  interpolation.  
We also highlight in this section  the  transform between Jacobi polynomials with different parameters, which is related to   the so-called {\em connection problem}. 

\subsection{Fractional integrals and derivatives}  
Let $\mathbb N$ and $\mathbb R$ be the sets of positive integers and real numbers, respectively, and denote by 
 \begin{equation}\label{mathNR}
{\mathbb N}_0:=\{0\}\cup {\mathbb N}, \quad {\mathbb R}^+:= \big\{a\in {\mathbb R}: a> 0\big\}, \quad {\mathbb R}^+_0:=\{0\}\cup {\mathbb R}^+.
\end{equation}
The  definitions  of  fractional integrals and  fractional derivatives in the Caputo and Riemann-Liouville sense can be found from  many resources
(see,  e.g., \cite{Pod99,Diet10}): {\em  For  $\rho\in {\mathbb R}^+,$  the left-sided and right-sided   fractional integrals of order $\rho$ are defined  by 
 \begin{equation}\label{leftintRL}
 \begin{split}
   &({}_a I_{x}^\rho u)(x)=\frac 1 {\Gamma(\rho)}\int_{a}^x \frac{u(y)}{(x-y)^{1-\rho}} dy, \quad 
   ({}_x I_{b}^\rho u)(x)=\frac 1 {\Gamma(\rho)}\int_{x}^b \frac{u(y)}{(y-x)^{1-\rho}} dy,
   \end{split}
\end{equation}
for $x\in (a,b),$ respectively,  where $\Gamma(\cdot)$ is the Gamma function.}   
 
Denote the ordinary derivative by  $D^k=d^k/dx^k$ (with  $k\in {\mathbb N}$).    In general,  the fractional integral and  ordinary derivative operators are not  commutable, leading to  two types of fractional derivatives:     {\em For $\mu\in (k-1,k)$ with $k\in {\mathbb N},$ the 
left-sided Caputo fractional derivative  of order $\mu$ is  defined by
\begin{equation}\label{left-fra-der-c}
    \big({^C}\hspace*{-7pt}{}_aD_{\!x}^\mu\, u\big)(x)={}_a I_{x}^{k-\mu}\, \big(D^k u\big)(x)
    =\dfrac{1}{\Gamma(k-\mu)}\displaystyle\int_{a}^x\dfrac{u^{(k)}(y)}
    {(x-y)^{\mu-k+1}}dy,
\end{equation}
and the left-sided  Riemann-Liouville fractional derivative  of order $\mu$   defined by
\begin{equation}\label{left-fra-der-rl}
    \big({^{R}}\hspace*{-7pt}{}_aD_{\!x}^\mu\, u\big)(x)=D^k\big({}_a I_{x}^{k-\mu}\, u\big)(x)
   =\dfrac{1}{\Gamma(k-\mu)}\dfrac{d^k}{dx^k} \displaystyle\int_{a}^x\dfrac{u(y)}{(x-y)^{
    \mu-k+1}}dy.
\end{equation}}
Note that  if $\mu=k\in {\mathbb N},$  we have  ${^C}\hspace*{-7pt}{}_aD_{\!x}^k={^R}\hspace*{-7pt}{}_aD_{\!x}^k=D^k.$
\begin{rem}\label{rmkOpt} Similarly, one can define the right-sided Caputo and Riemann-Liouville derivatives: 
\begin{equation}\label{rightones}
\big({^C}\hspace*{-7pt}{}_xD_{\!b}^\mu u\big)(x)=(-1)^k{}_x I_{b}^{k-\mu}\,
\big(D^k u\big)(x),\quad \big({^R}\hspace*{-7pt}{}_xD_{\!b}^\mu u\big)(x)
= (-1)^k\,D^k\big({}_x I_{b}^{k-\mu}u\big)(x).
 \end{equation}
 With a change of variables:  $$x=b+a-t,\;\;\; u(x)=v(a+b-x),\quad x,t\in (a,b),$$   one finds 
 \begin{equation}\label{rightintRL1}
 \begin{split}
   &({}_t I_{b}^\rho v)(t)= ({}_a I_{x}^\rho u)(x),\quad x,t\in (a,b),
   \end{split}
\end{equation}
and likewise for the fractional derivatives.  In view of this, we restrict our discussions to the left-sided fractional integrals and derivatives. \qed
\end{rem}

Recall that  for $\mu\in (k-1,k)$ with $k\in {\mathbb N},$
\begin{equation}\label{DSc}
 \big({^{R}}\hspace*{-7pt}{}_aD_{\!x}^\mu\, u\big)(x) = \big({^C}\hspace*{-7pt}{}_aD_{\!x}^\mu\, u\big)(x)+ \sum_{j=0}^{k-1} \frac{u^{(j)}(a)}{\Gamma(1+j-\mu)}(x-a)^{j-\mu} 
\end{equation}
(see, e.g., \cite{Pod99,Diet10}),  which implies
\begin{equation}\label{deveqcase}
\big({^{R}}\hspace*{-7pt}{}_aD_{\!x}^\mu\, u\big)(x) =\big({^C}\hspace*{-7pt}{}_aD_{\!x}^\mu\, u\big)(x),\;\;\;   {\rm if}\;\; u^{(j)}(a)=0,\;\;\; j=0,\cdots,k-1.
\end{equation}
Moreover, there holds (see, e.g., \cite[Thm. 2.14]{Diet10}):
\begin{equation}\label{rulesa}
   {^{R}}\hspace*{-7pt}{}_a D_x^\mu\,{}_a I_{x}^{\mu}\, u(x) = u(x)\quad \text{a.e. in} \;\; (a,b),\;\;  \mu\in {\mathbb R}^+.
 \end{equation}
In addition, we have  the  explicit formulas  (see, e.g.,  \cite[P. 49]{Diet10}):   for  real $\eta>-1$ and $\mu\in{\mathbb R}^+,$
\begin{equation}\label{intformu}
{}_a I_{x}^\mu \, (x-a)^\eta=  \dfrac{\Gamma(\eta+1)}{\Gamma(\eta+\mu+1)} (x-a)^{\eta+\mu},
\end{equation}
and for $\mu \in (k-1,k)$ with $k\in {\mathbb N},$
\begin{equation}\label{propCaputo1}
{^C}\hspace*{-7pt}{}_aD_{\!x}^\mu\, (x-a)^\eta=
\begin{cases}0,\quad & {\rm if}\;\;  \eta \in \{0,1,\cdots, k-1\},\\[6pt]
 \dfrac{\Gamma(\eta+1)}{\Gamma(\eta-\mu+1)} (x-a)^{\eta-\mu},\;\;   & {\rm if}\;\;\;\eta>k-1, \;\; \eta\in {\mathbb R}.
\end{cases}
\end{equation}
Similarly,    for  $\mu \in (k-1,k)$ with $k\in {\mathbb N},$ and real $\eta>-1,$ we have  (cf. \cite[P. 72]{Pod99})
\begin{equation}\label{rlpower}
{^{R}}\hspace*{-7pt}{}_aD_{\!x}^\mu\, (x-a)^\eta=\frac{\Gamma(\eta+1)}{\Gamma(\eta-\mu+1)} (x-a)^{\eta-\mu}.
\end{equation}

Hereafter,  we restrict our attention to the interval $\Lambda:=(-1,1),$ and simply denote
\begin{equation}\label{simplifynota}
I_{-}^\mu:={}_{-1} I_{x}^\mu,\quad  {^C}\hspace*{-3pt}D_-^\mu:={}_{-1}\hspace*{-5pt}{^C}\hspace*{-2pt}D_{\!x}^\mu,\quad  {^R}\hspace*{-2pt} D_-^\mu={}_{-1}\hspace*{-6pt}{^{R}}\hspace*{-2pt}D_{\!x}^\mu\,,\quad x\in \Lambda.
\end{equation}
Apparently,  the formulas and results can be   extended to the general interval $(a,b)$ straightforwardly.

\subsection{Jacobi polynomials and Jacobi-Gauss-Lobatto interpolation}\label{subsect:Jcbi} 
 Throughout this paper,  the notation and normalization of Jacobi polynomials are  in accordance with  Szeg\"o  \cite{szeg75}.
 
 For $\alpha,\beta\in {\mathbb R},$ the Jacobi polynomials  are defined  by the hypergeometric function (cf. Szeg\"o \cite[(4.21.2)]{szeg75}):
\begin{equation}\label{Jacobidefn0}
\begin{split}
P_n^{(\alpha,\beta)}(x)&=\frac{\Gamma(n+\alpha+1)}{n!\Gamma(\alpha+1)}{}_2F_1\Big(-n, n+\alpha+\beta+1;\alpha+1;\frac{1-x} 2\Big),\;\;\; x\in \Lambda, \;\;  n\in {\mathbb N},\\
\end{split}
\end{equation}
and  $P_0^{(\alpha,\beta)}(x)\equiv 1.$ Note that
{\em $P_n^{(\alpha,\beta)}(x)$  is always a polynomial in $x$ for all  $\alpha,\beta\in {\mathbb R},$ but not always of degree $n.$}   A reduction of the degree of $P_n^{(\alpha,\beta)}(x)$ occurs   if and only if 
\begin{equation}\label{reduction}
m:=-(n+\alpha+\beta)\in {\mathbb N} \;\;\; {\rm and}\;\;\; 1\le m\le n
 \end{equation}
(cf. \cite[P. 64]{szeg75} and \cite{Cagliero2014}).
Note that for $\alpha,\beta\in {\mathbb R},$ there hold
 \begin{equation}\label{parity}
 P_n^{(\alpha,\beta)}(x)=(-1)^n P_n^{(\beta,\alpha)}(-x); \quad P_n^{(\alpha,\beta)}(1)=\frac{\Gamma(n+\alpha+1)}{n!\Gamma(\alpha+1)}.
 \end{equation}

For  $\alpha,\beta>-1,$   the classical  Jacobi polynomials  are orthogonal with respect to the Jacobi weight function:  $\omega^{(\alpha,\beta)}(x) = (1-x)^{\alpha}(1+x)^{\beta},$ namely,
\begin{equation}\label{jcbiorth}
    \int_{-1}^1 {P}_n^{(\alpha,\beta)}(x) {P}_{n'}^{(\alpha,\beta)}(x) \omega^{(\alpha,\beta)}(x) \, dx= \gamma _n^{(\alpha,\beta)} \delta_{nn'},
\end{equation}
where $\delta_{nn'}$ is the Dirac Delta symbol, and
\begin{equation}\label{co-gamma}
\gamma _n^{(\alpha,\beta)} =\frac{2^{\alpha+\beta+1}\Gamma(n+\alpha+1)\Gamma(n+\beta+1)}{(2n+\alpha+\beta+1) n!\,\Gamma(n+\alpha+\beta+1)}.
\end{equation}
However, the orthogonality 
does not carry over to the  general case with $\alpha$ or $\beta\le -1$ (see, e.g., 
 \cite{kuijlaars2005orthogonality} and \cite[Ch. 3]{koekoek2010hypergeometric}).


The following formulas derived from Bateman fractional integral formulas of Jacobi polynomials \cite{Bateman1909} (also see \cite[P.  313]{Andrews99},  \cite[P. 96]{szeg75} and \cite{CSW2014})  are dispensable for the algorithm development.
\begin{theorem}\label{JacobiForm2} Let  $\rho, s\in  {\mathbb R}^+, \; n\in {\mathbb N}_0$ and $x\in \Lambda.$ Then for     $ \alpha\in {\mathbb R}$ and
$\beta>-1,$ we have
 \begin{equation}\label{newbatemanam}
I_{-}^\rho\big\{(1+x)^\beta P_n^{(\alpha,\beta)}(x)\big\}=\frac{\Gamma(n+\beta+1)}{\Gamma(n+\beta+\rho+1)}
(1+x)^{\beta+\rho} P_n^{(\alpha-\rho,\beta+\rho)}(x),
\end{equation}
and
\begin{equation}\label{newbatemanam3s}
{}^R\hspace*{-2pt}D_-^s\big\{(1+x)^{\beta+s} P_n^{(\alpha-s,\beta+s)}(x)\big\}
=\frac{\Gamma(n+\beta+s+1)} {\Gamma(n+\beta+1)}(1+x)^\beta P_n^{(\alpha,\beta)}(x).
\end{equation}
\end{theorem}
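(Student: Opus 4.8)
The plan is to derive \eqref{newbatemanam} from the classical Bateman fractional integral formula for Jacobi polynomials, and then obtain \eqref{newbatemanam3s} from \eqref{newbatemanam} by applying a suitable Riemann--Liouville fractional derivative and invoking the inversion rule \eqref{rulesa}. For the first identity, I would start from the hypergeometric representation \eqref{Jacobidefn0} of $P_n^{(\alpha,\beta)}(x)$: writing $t=(1+x)/2\in(0,1)$, the product $(1+x)^\beta P_n^{(\alpha,\beta)}(x)$ becomes, up to a constant, $t^\beta$ times a terminating ${}_2F_1$ in $t$, i.e. a finite linear combination of the monomials $(1+x)^{\beta+j}$ for $j=0,\dots,n$. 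Then I would apply $I_{-}^{\rho}$ termwise using the power rule \eqref{intformu} with $a=-1$ and $\eta=\beta+j$ (legitimate since $\beta>-1$ ensures $\eta>-1$), which multiplies the $j$-th term by $\Gamma(\beta+j+1)/\Gamma(\beta+j+\rho+1)$ and raises the power to $(1+x)^{\beta+\rho+j}$. The resulting series must then be recognized, after factoring out $(1+x)^{\beta+\rho}$, as the hypergeometric series defining $P_n^{(\alpha-\rho,\beta+\rho)}(x)$ times the stated constant $\Gamma(n+\beta+1)/\Gamma(n+\beta+\rho+1)$.

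The key combinatorial step is therefore the coefficient identification: one needs
\[
\frac{\Gamma(\beta+j+1)}{\Gamma(\beta+j+\rho+1)}\cdot\big[\text{coeff of }t^j\text{ in }{}_2F_1(-n,n+\alpha+\beta+1;\beta+1;t)\big]
= \frac{\Gamma(n+\beta+1)}{\Gamma(n+\beta+\rho+1)}\cdot\big[\text{coeff of }t^j\text{ in }{}_2F_1(-n,n+\alpha+\beta+1;\beta+\rho+1;t)\big]
\]
after accounting for the Jacobi normalization constants from \eqref{Jacobidefn0}. Expanding both ${}_2F_1$'s via Pochhammer symbols, the ratio of the $j$-th coefficients on the two sides reduces to $(\beta+1)_j/(\beta+\rho+1)_j$ matched against $\Gamma(\beta+j+1)\Gamma(\beta+\rho+1)/(\Gamma(\beta+j+\rho+1)\Gamma(\beta+1))$, which is an elementary Gamma-function identity. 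I expect this bookkeeping with the Pochhammer symbols and the two normalization factors to be the main (though routine) obstacle; alternatively, one may simply cite the Bateman formula as stated in \cite{Bateman1909,Andrews99,szeg75,CSW2014} and note that \eqref{newbatemanam} is a renormalized form of it valid for $\alpha\in{\mathbb R}$ by analytic continuation in $\alpha$ (the formula being polynomial-rational in $\alpha$ on both sides).

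For the second identity \eqref{newbatemanam3s}, the plan is to set $\rho=s$ in \eqref{newbatemanam}, which gives
\[
I_{-}^s\big\{(1+x)^\beta P_n^{(\alpha,\beta)}(x)\big\}=\frac{\Gamma(n+\beta+1)}{\Gamma(n+\beta+s+1)}(1+x)^{\beta+s} P_n^{(\alpha-s,\beta+s)}(x).
\]
Applying ${}^R\hspace*{-2pt}D_-^s$ to both sides and using \eqref{rulesa}, namely ${}^R\hspace*{-2pt}D_-^s\, I_{-}^s\, u = u$ a.e.\ on $\Lambda$ (with $u(x)=(1+x)^\beta P_n^{(\alpha,\beta)}(x)$, which is continuous, hence the identity holds everywhere on $(-1,1)$), the left-hand side collapses to $(1+x)^\beta P_n^{(\alpha,\beta)}(x)$, and rearranging the constant $\Gamma(n+\beta+s+1)/\Gamma(n+\beta+1)$ to the other side yields exactly \eqref{newbatemanam3s}. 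The only point requiring a word of care is that \eqref{newbatemanam} needs $\beta+s>-1$ for the right-hand side expression to be meaningful in the same sense, which is automatic here since $\beta>-1$ and $s>0$; and that the parameter in the first slot, $\alpha-s$, is allowed to be an arbitrary real, which is fine because Theorem statement only restricts the second (the $\beta$-type) parameter. This completes the proof.
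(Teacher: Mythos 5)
Your proposal is correct, but it is worth noting that the paper does not actually prove Theorem \ref{JacobiForm2}: it simply quotes the identities as consequences of Bateman's fractional integral formula, with pointers to \cite{Bateman1909}, Andrews--Askey--Roy, Szeg\H{o} and \cite{CSW2014}. What you supply is therefore a self-contained derivation rather than a re-tracing of the paper's argument. Your route is sound: using \eqref{parity} to pass to the representation $P_n^{(\alpha,\beta)}(x)=(-1)^n\frac{\Gamma(n+\beta+1)}{n!\,\Gamma(\beta+1)}\,{}_2F_1\bigl(-n,\,n+\alpha+\beta+1;\,\beta+1;\,\tfrac{1+x}{2}\bigr)$ (legitimate since $\beta>-1$, and the only one compatible with the power rule \eqref{intformu}, which acts on powers of $1+x$, not $1-x$), applying $I_-^{\rho}$ termwise to the finite sum, and matching Pochhammer symbols via $\Gamma(\beta+j+1)/\Gamma(\beta+j+\rho+1)=\Gamma(\beta+1)\big/\bigl(\Gamma(\beta+\rho+1)(\beta+\rho+1)_j\bigr)$ does reproduce exactly the prefactor $\Gamma(n+\beta+1)/\Gamma(n+\beta+\rho+1)$ and the series for $P_n^{(\alpha-\rho,\beta+\rho)}$, the key structural point being that the upper parameter $n+\alpha+\beta+1$ is invariant under $(\alpha,\beta)\mapsto(\alpha-\rho,\beta+\rho)$; your analytic-continuation remark also handles the degenerate values of $\alpha$ in \eqref{reduction}. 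For \eqref{newbatemanam3s}, deducing it from the $\rho=s$ case of \eqref{newbatemanam} and the inversion rule \eqref{rulesa} is fine; the only loose stitch is your claim that continuity of $u$ alone upgrades the a.e.\ identity to all of $(-1,1)$ --- you also need the left-hand side ${}^R\hspace*{-2pt}D_-^s\{(1+x)^{\beta+s}P_n^{(\alpha-s,\beta+s)}\}$ to be continuous, which follows at once by expanding it in powers $(1+x)^{\beta+s+j}$ and applying \eqref{rlpower} termwise (and that same termwise computation would in fact prove \eqref{newbatemanam3s} directly, bypassing \eqref{rulesa} altogether). Compared with the paper's citation, your argument buys a verifiable proof inside the paper's own framework at the cost of a page of Pochhammer bookkeeping; the citation buys brevity but leaves the reader to reconcile normalizations across sources.
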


As direct consequences of  Theorem \ref{JacobiForm2},  we have the following  important special cases. 
\begin{corollary}\label{twospecial} For $ \alpha\in {\mathbb R}, \rho\in {\mathbb R}^+,
 n\in {\mathbb N}_0$ and $x\in \Lambda,$
 \begin{align}
& I_{-}^\rho\big\{P_n^{(\alpha,0)}(x)\big\}=\frac{n!}{\Gamma(n+\rho+1)}
(1+x)^{\rho} P_n^{(\alpha-\rho,\rho)}(x); \label{specaseA}\\
&{}^R\hspace*{-2pt}D_-^\rho\big\{(1+x)^\rho P_n^{(\alpha,\rho)}(x)\big\}
=\frac{\Gamma(n+\rho+1)} {n!}P_n^{(\alpha+\rho,0)}(x). \label{specaseB}
\end{align}
In particular,  for $\rho\in {\mathbb R}^+,  n\in {\mathbb N}_0$ and $x\in \Lambda,$
 \begin{align}
& I_{-}^\rho\big\{P_n(x)\big\}=\frac{n!}{\Gamma(n+\rho+1)}
(1+x)^{\rho} P_n^{(-\rho,\rho)}(x); \label{specaseAB}\\
&{}^R\hspace*{-2pt}D_-^\rho\big\{(1+x)^\rho P_n^{(-\rho,\rho)}(x)\big\}
=\frac{\Gamma(n+\rho+1)} {n!}P_n(x). \label{specaseAB2}
\end{align}
\end{corollary}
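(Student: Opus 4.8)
\emph{Plan of proof.}\; All four identities are immediate specializations of the Bateman formulas in Theorem~\ref{JacobiForm2}, so the strategy is merely to make the correct choice of parameters and to verify that the hypotheses of that theorem remain satisfied under each substitution. First I would obtain \eqref{specaseA} by setting $\beta=0$ in the integral formula \eqref{newbatemanam}: the requirement $\beta>-1$ is met, $\alpha\in\mathbb R$ is arbitrary, and since $(1+x)^{0}\equiv 1$ and $\Gamma(n+1)=n!$, the right-hand side collapses to $\tfrac{n!}{\Gamma(n+\rho+1)}(1+x)^{\rho}P_n^{(\alpha-\rho,\rho)}(x)$, which is exactly \eqref{specaseA}. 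Identity \eqref{specaseAB} is then the special case $\alpha=0$ of \eqref{specaseA}, on recalling the normalization $P_n=P_n^{(0,0)}$ used throughout.

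Next, for \eqref{specaseB} I would specialize the Bateman derivative formula \eqref{newbatemanam3s} by taking $\beta=0$ and $s=\rho$, which yields
\[
{}^R\hspace*{-2pt}D_-^\rho\big\{(1+x)^{\rho}P_n^{(\alpha-\rho,\rho)}(x)\big\}=\frac{\Gamma(n+\rho+1)}{n!}\,P_n^{(\alpha,0)}(x).
\]
Since the only constraint on $\alpha$ in Theorem~\ref{JacobiForm2} is $\alpha\in\mathbb R$, I may relabel $\alpha\mapsto\alpha+\rho$, which turns this into \eqref{specaseB}; taking $\alpha=-\rho$ there (and again using $P_n=P_n^{(0,0)}$) gives \eqref{specaseAB2}. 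As an independent consistency check, and an alternative derivation, one can instead apply ${}^R\hspace*{-2pt}D_-^\rho$ to both sides of \eqref{specaseA} and invoke the left-inverse relation \eqref{rulesa}, i.e. ${}^R\hspace*{-2pt}D_-^\rho\,I_-^\rho\,u=u$: the left side becomes $\tfrac{n!}{\Gamma(n+\rho+1)}\,{}^R\hspace*{-2pt}D_-^\rho\{(1+x)^{\rho}P_n^{(\alpha-\rho,\rho)}(x)\}$ and the right side is $P_n^{(\alpha,0)}(x)$, reproducing the displayed identity after relabeling.

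I do not anticipate any real obstacle here: the substance lies entirely in Theorem~\ref{JacobiForm2}, and the remaining work is bookkeeping of Gamma-function arguments together with the trivial checks that $\beta=0>-1$ and that shifting $\alpha$ preserves the hypotheses. The only point deserving a word of care is the legitimacy of the relabelings $\alpha\mapsto\alpha+\rho$, $\alpha\mapsto 0$, $\alpha\mapsto-\rho$ in the presence of possible degree reductions of $P_n^{(\alpha,\beta)}$ as in \eqref{reduction}; but since \eqref{newbatemanam}--\eqref{newbatemanam3s} are polynomial identities valid for all admissible parameters, these specializations present no difficulty.
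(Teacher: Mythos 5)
Your proposal is correct and follows exactly the route the paper intends: the corollary is stated as a direct consequence of Theorem~\ref{JacobiForm2}, obtained by setting $\beta=0$ (and relabeling $\alpha$) in \eqref{newbatemanam} and \eqref{newbatemanam3s}, with \eqref{specaseAB}--\eqref{specaseAB2} the further specializations $\alpha=0$ and $\alpha=-\rho$. The parameter substitutions you check are all admissible, so nothing more is needed.
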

\begin{rem}\label{imprmk} Remarkably, the formulas \eqref{specaseAB}-\eqref{specaseAB2} link up the Legendre polynomials with the non-polynomials  $(1+x)^\rho P_n^{(-\rho,\rho)}(x).$  They
 are referred to as the generalised Jacobi functions \cite{Guo.SW09,CSW2014}, and as the Jacobi poly-fractonomials \cite{zayernouri2013fractional} when $0<\rho<1$. \qed 
 \end{rem}
 

For $\alpha,\beta>-1,$ let $\big\{x_j:=x_{N,j}^{(\alpha,\beta)},\omega_j:=\omega_{N,j}^{(\alpha,\beta)}\big\}_{j=0}^N$   be the set of  Jacobi-Gauss-Lobatto  (JGL) quadrature nodes and weights,  where the nodes are zeros of $(1-x^2) D P_N^{(\alpha,\beta)}(x).$   Hereafter,  we assume that $\{x_j\}$ are  arranged in ascending order so that  $x_0=-1$ and $x_N=1.$  Moreover,  to alleviate the burden of heavy notation, we sometimes drop the parameters $\alpha, \beta $ in the notation, whenever it is clear from the context.


The JGL quadrature  enjoys the exactness  (see, e.g.,  \cite[Ch. 3]{ShenTangWang2011}):  
\begin{equation}\label{newquad}
\int_{-1}^1 \phi(x) \omega^{(\alpha,\beta)}(x)\,dx=\sum_{j=0}^N \phi(x_j)\omega_j,\quad \forall\, \phi\in {\mathcal  P}_{2N-1},
\end{equation}
where ${\mathcal P_N}$ is the set of all  polynomials of degree at most $N.$  Let  ${\mathcal I}_N u$ be the JGL Lagrange polynomial interplant of $u\in C(\bar\Lambda)$ defined by 
\begin{equation}\label{Inuexp}
\big({\mathcal I}_Nu\big)(x)=\sum_{j=0}^N u(x_j) h_j(x)\in {\mathcal P}_N, 
\end{equation} 
where the interpolating basis polynomials $\{h_j\}_{j=0}^N$  can be expressed by 
\begin{equation}\label{jacbasisfun}
h_j(x)=\sum_{n=0}^N  t_{nj} P_n^{(\af,\bt)}(x),\quad  0\le j\le N,\;\;\; {\rm where}\;\;\;  t_{nj}:= \frac{\omega_j}{\tilde \gamma_n^{(\af,\bt)}} P_n^{(\af,\bt)}(x_j),
\end{equation}
with 
\begin{equation}\label{tildgamma}
\tilde \gamma_n^{(\af,\bt)}=\gamma_n^{(\af,\bt)}, \;\; 0\le n\le N-1; \;\;\; \tilde \gamma_N^{(\af,\bt)}=\Big( 2+\frac {\af+\bt+1} N \Big)\gamma_N^{(\af,\bt)}.
\end{equation}

\subsection{Transform between Jacobi polynomials with different parameters}\label{connprob} 
 Our efficient computation of fractional differentiation matrices and their inverses,  relies on 
the transform between Jacobi expansions with different parameters.  It is evident that for  $ \af,\bt, a,b>-1,$ 
$$
{\mathcal P}_N={\rm span}\big\{P_n^{(\af,\bt)}\,:\, 0\le n\le N\big\}={\rm span}\big\{P_l^{(a,b)}\,:\, 0\le l\le N\big\}. 
$$
  {\em  
Given the Jacobi expansion coefficients $\{\hat u_n^{(\af,\bt)}\}$ of $u\in {\mathcal P}_N$, find the coefficients $\{\hat u_l^{(a,b)}\}$ such that}
\begin{equation}\label{uluk}
u(x)=\sum_{n=0}^N \hat u_n^{(\af,\bt)} P_n^{(\af,\bt)}(x)=\sum_{l=0}^N \hat u_l^{(a,b)} P_l^{(a,b)}(x).  
\end{equation}
This  defines a {\em connection problem} (cf. \cite{Askey75}) resolved by the transform:
\begin{equation}\label{conntrans}
\bs {\hat u}^{(a,b)}= {}^{(\af,\bt)\!}\bs C^{(a,b)}\, \bs {\hat u}^{(\af,\bt)},
\end{equation}
where $\bs {\hat u}^{(\af,\bt)}$ and $\bs {\hat u}^{(a,b)}$ are column-$(N+1)$ vectors of the coefficients, and ${}^{(\af,\bt)\!}\bs C^{(a,b)}$ is the connection matrix  of the transform from $\big\{P_n^{(\af,\bt)}\big\}$ to 
 $\big\{P_l^{(a,b)}\big\}.$ One finds from  the orthogonality  \eqref{jcbiorth} and \eqref{uluk}  that the entries of  ${}^{(\af,\bt)\!}\bs C^{(a,b)}$, i.e.,  the connection coefficients, are given by 
\begin{equation}\label{constAB}
{}^{(\af,\bt)\!}\bs C_{ln}^{(a,b)}:=\frac 1 {\gamma_l^{(a,b)}} 
\int_{-1}^1P_l^{(a,b)}(x) P_n^{(\af,\bt)}(x)\, \omega^{(a,b)}(x) dx.
\end{equation}
Some remarks are in order.
\begin{itemize}

\item By the orthogonality  \eqref{jcbiorth},  we have  ${}^{(\af,\bt)\!}\bs C_{ln}^{(a,b)}=0$ 
for  $ n<l,$ so the connection matrix 
is an upper triangular matrix. Therefore, \eqref{conntrans} yields 
\begin{equation}\label{ucoefrela}
\hat u_l^{(a,b)}
=\sum_{n=l}^N {}^{(\af,\bt)\!}\bs C_{ln}^{(a,b)}\,   \hat u_n^{(\af,\bt)},\;\;\;\; 0\le l\le N.
\end{equation}

\item In fact, we have the explicit  formula of the connection coefficient (cf. \cite[P. 357]{Andrews99}) 
\begin{equation}\label{cocoefcnk}
\begin{split}
{}^{(\af,\bt)\!}\bs C_{ln}^{(a,b)}&=
(2l+a+b+1) \frac{\Gamma(n+\af+1)}{\Gamma(n+\af+\bt+1)} \frac{\Gamma(l+a+b+1)}{\Gamma(l+a+1)}\times\\
&\quad  \sum_{m=0}^{n-l}\frac{(-1)^m \Gamma(n+l+m+\af+\bt+1) \Gamma(m+l+a+1)}{m!(n-l-m)!
\Gamma(l+m+\af+1)\Gamma(m+2l+a+b+2)}.
\end{split}
\end{equation}
 This  exact formula is less useful in computation,   as  even in the Chebyshev-to-Legendre case, significant effort has to be made to analyze their behaviors  and  take care of the cancellations, when $N$ is large  (cf. \cite{Alpert.R91,Boyd.14}).    One can actually compute the connection coefficients by using the Jacobi-Gauss quadrature  with $(N+1)$ nodes  and with respect to the weight function $\omega^{(a,b)}.$

\item In general,  it requires $O(N^2)$ operations to carry out the matrix-vector product in \eqref{conntrans}.  In practice, several techniques have been proposed to speed up the transforms 
(see, e.g., \cite{Alpert.R91,AkHes12,Bel.R14,HSX2015} and the monograph \cite{keiner2011fast} and the references therein). In particular,  through exploiting the remarkable property that  the columns of  the connection matrix are eigenvectors of a certain structured quasi-separable  matrix,  fast and stable algorithms  can be developed  (cf. \cite{keiner2011fast,Bel.R14} and the references therein). The interesting work \cite{HSX2015} fully used the low-rank property of the connection matrix, and proposed fast algorithms based on  rank structured matrix approximation. 


\end{itemize}

%
%
%
%
%


\vskip 10pt

\section{Fractional  pseudospectral differentiation}\label{sect:FPSDM}
\setcounter{equation}{0}
\setcounter{lmm}{0}
\setcounter{thm}{0}

In this section, we extend the pseudospectral differentiation (PSD) process of integer order derivatives to the fractional context, and present efficient algorithms  for computing the fractional pseudospectral differentiation matrix (F-PSDM). We show that
\begin{itemize}
\item[(i)] in the Caputo case,  it suffices to evaluate Caputo  F-PSDM of order $\mu\in (0,1)$ to compute F-PSDM of any order (see Theorem \ref{Thm:recurvere}); 
\item[(ii)] in the Riemann-Liouville case, it is necessary to modify the fractional derivative operator in order to absorb the singular fractional factor (see \eqref{fpsdmod}), and the computation of the modified F-PSDM of any order boils down to computing a modified  fractional  integral matrix of order $\mu\in (0,1)$   (see Theorem \ref{Thm:recurvereRL}).  
\end{itemize}

\subsection{Fractional pseudospectral differentiation process}
It is known that the pseudospectral differentiation process   is the heart of a collocation/pseudospectral method for PDEs (see, e.g., \cite{CHQZ06,ShenTangWang2011}). 
Typically, for any $u\in {\mathcal P}_N,$ the differentiation  $D^k u$ is carried out  via  \eqref{Inuexp} in an exact manner, that is, 
\begin{equation}\label{DIn}
D^k u(x)=\sum_{j=0}^N u(x_j) D^kh_j(x), \quad k\in {\mathbb N}.
\end{equation}
It is straightforward to extend this to the fractional pseudospectral differentiation.  More precisely,   for any $u\in {\mathcal P_N},$  
\begin{equation}\label{fpsd}
(D^\mu u)(x)=\sum_{j=0}^N u(x_j) D^\mu h_j(x), \quad D^\mu:={^C}\hspace*{-3pt}D_-^\mu, {^R}\hspace*{-3pt}D_-^\mu,\;\;\; \mu\in {\mathbb R}^+.
\end{equation}
However, in distinct contrast to \eqref{DIn},  we have  $D^\mu u,D^\mu h_j \not \in \mathcal P_N,$ if  $\mu\not \in {\mathbb N}.$  To  provide some  insights into this, we introduce 
the space: 
\begin{equation}\label{fspsp}
{\mathcal F}_N^{(\nu)}:=\big\{ (1+x)^\nu \phi\;:\;\forall\, \phi\in{\mathcal  P}_N \big\},\quad \nu\in {\mathbb R},
\end{equation}
and show the following properties. 
\begin{lmm}\label{newsolut} For $\mu\in (k-1,k)$ with $k\in {\mathbb N},$ and for  any $u\in {\mathcal P}_N,$ we have  
\begin{equation}\label{Dcrprop}
{^C}\hspace*{-3pt}D_-^\mu u\in {\mathcal F}_{N-k}^{(k-\mu)},\quad  {^R}\hspace*{-2pt}D_-^\mu u\in {\mathcal F}_{N}^{(-\mu)},
\end{equation}
and 
\begin{equation}\label{singular}
{^C}\hspace*{-3pt}D_-^\mu u\to 0,\quad {^R}\hspace*{-3pt}D_-^\mu u\to \infty\quad {\rm as}\;\;  x\to -1.
\end{equation}
\end{lmm}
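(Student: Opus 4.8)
The plan is to reduce everything to the action of the two operators on the monomials $(1+x)^j$, $0\le j\le N$, for which the explicit formulas \eqref{propCaputo1} and \eqref{rlpower} (on $\Lambda$, with $a=-1$ and $\eta=j$) are at hand. Every $u\in\mathcal P_N$ has the finite Taylor expansion about $x=-1$,
\begin{equation*}
u(x)=\sum_{j=0}^N c_j\,(1+x)^j,\qquad c_j=\frac{u^{(j)}(-1)}{j!},
\end{equation*}
so by linearity it suffices to compute $\,{}^C\!D_-^\mu(1+x)^j\,$ and $\,{}^R\!D_-^\mu(1+x)^j\,$ and reassemble.

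First I would settle the membership statements \eqref{Dcrprop}. By \eqref{propCaputo1}, $\,{}^C\!D_-^\mu(1+x)^j=0$ for $0\le j\le k-1$, while $\,{}^C\!D_-^\mu(1+x)^j=\frac{\Gamma(j+1)}{\Gamma(j-\mu+1)}(1+x)^{j-\mu}$ for $j\ge k$; summing,
\begin{equation*}
\big({}^C\!D_-^\mu u\big)(x)=(1+x)^{k-\mu}\sum_{j=k}^N \frac{j!\,c_j}{\Gamma(j-\mu+1)}\,(1+x)^{j-k},
\end{equation*}
whose second factor lies in $\mathcal P_{N-k}$, i.e. $\,{}^C\!D_-^\mu u\in\mathcal F_{N-k}^{(k-\mu)}$. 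For Riemann-Liouville, \eqref{rlpower} shows that no monomial is annihilated, and the same reassembly gives
\begin{equation*}
\big({}^R\!D_-^\mu u\big)(x)=(1+x)^{-\mu}\sum_{j=0}^N \frac{j!\,c_j}{\Gamma(j-\mu+1)}\,(1+x)^{j}\in\mathcal F_N^{(-\mu)}.
\end{equation*}
(Equivalently, the Riemann-Liouville membership follows from the Caputo one via \eqref{DSc}, using $\mathcal F_{N-k}^{(k-\mu)}=(1+x)^{k-\mu}\mathcal P_{N-k}\subset(1+x)^{-\mu}\mathcal P_N=\mathcal F_N^{(-\mu)}$ and $(1+x)^{j-\mu}\in\mathcal F_N^{(-\mu)}$ for $0\le j\le k-1$; or one may work in the Jacobi basis, expanding $D^ku$ in $\{P_n^{(0,0)}\}$ and applying \eqref{specaseA} with $\rho=k-\mu$.)

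For the boundary behavior \eqref{singular}, note that $\mu\in(k-1,k)$ forces $k-\mu\in(0,1)$. In the Caputo formula the polynomial factor is continuous at $x=-1$, so $(1+x)^{k-\mu}\to 0^+$ gives $\,{}^C\!D_-^\mu u\to 0$ as $x\to-1$, for every $u\in\mathcal P_N$. For Riemann-Liouville the polynomial factor above is continuous at $-1$ with value $c_0/\Gamma(1-\mu)=u(-1)/\Gamma(1-\mu)$, while $(1+x)^{-\mu}\to+\infty$; more sharply, if $j^\ast:=\min\{\,j:\ 0\le j\le k-1,\ u^{(j)}(-1)\neq 0\,\}$ exists then $j^\ast-\mu<0$ (since $j^\ast\le k-1<\mu$) and the coefficient $j^\ast!/\Gamma(j^\ast-\mu+1)$ is finite and nonzero because $\mu\notin\mathbb Z$, whence $\,{}^R\!D_-^\mu u$ blows up like $(1+x)^{j^\ast-\mu}$. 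In the exceptional case $u^{(j)}(-1)=0$ for all $0\le j\le k-1$, \eqref{deveqcase} gives $\,{}^R\!D_-^\mu u=\,{}^C\!D_-^\mu u\to 0$, so \eqref{singular} should be read as holding for all $u$ with $u^{(j)}(-1)\neq 0$ for some $0\le j\le k-1$ (in particular for generic $u$).

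I expect no serious obstacle: once the right basis is chosen the argument is bookkeeping. The only point needing care is the divergence claim in \eqref{singular}, namely pinning down the most singular surviving term $(1+x)^{j^\ast-\mu}$ and verifying that its coefficient cannot accidentally vanish (which rests on $1/\Gamma$ having zeros only at the non-positive integers and on $\mu$ being non-integral), together with the caveat that the claim is void precisely when $u$ vanishes to order $k$ at $x=-1$.
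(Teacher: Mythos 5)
Your proof is correct, but it follows a different route from the paper. The paper expands $D^k u$ (respectively $u$) in Legendre polynomials and invokes the Bateman-type formula \eqref{specaseAB} with $\rho=k-\mu$, obtaining ${^C}\hspace*{-3pt}D_-^\mu u=I_-^{k-\mu}(D^k u)=(1+x)^{k-\mu}\phi$ with $\phi\in\mathcal P_{N-k}$ and ${^R}\hspace*{-2pt}D_-^\mu u=D^k(I_-^{k-\mu}u)=(1+x)^{-\mu}\psi$ with $\psi\in\mathcal P_N$, which is consistent with the Jacobi/Legendre machinery used throughout the rest of the paper; you instead expand $u$ in the monomial basis $(1+x)^j$ about $x=-1$ and apply the power formulas \eqref{propCaputo1} and \eqref{rlpower} termwise, which is more elementary and needs no orthogonal-polynomial input. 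Your treatment of \eqref{singular} is in fact more careful than the paper's: the paper asserts that \eqref{singular} ``follows immediately'' from \eqref{Dcrprop}, but the blow-up of ${^R}\hspace*{-2pt}D_-^\mu u$ can fail when $u^{(j)}(-1)=0$ for all $0\le j\le k-1$ (e.g.\ $u=(1+x)^k$, where by \eqref{deveqcase} the RL and Caputo derivatives coincide and tend to $0$); you correctly isolate the leading singular term $(1+x)^{j^\ast-\mu}$, verify its coefficient is nonzero since $\mu\notin\mathbb Z$, and state the needed caveat, so your argument both recovers the lemma and pins down exactly when the divergence claim holds.
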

\begin{proof} It is clear that 
$$D^k u\in {\mathcal P}_{N-k}={\rm span}\big\{P_n\;:\; 0\le n\le N-k\big\}.$$
Thus,  we derive from the definition \eqref{left-fra-der-c} and \eqref{specaseAB} with $\rho=k-\mu$ that  
\begin{equation}\label{casesA}
{^C}\hspace*{-3pt}D_-^\mu u=I_-^{k-\mu} (D^ku) =(1+x)^{k-\mu}\phi,\quad \text{for some}\;\; \phi\in {\mathcal P}_{N-k}.
\end{equation}
Similarly, in the Riemann-Liouville  case,   we deduce from \eqref{specaseAB} that   $I_-^{k-\mu}u\in {\mathcal F_N^{(k-\mu)}}.$ Then  by the definition  \eqref{left-fra-der-rl}, we obtain from  a direct calculation that 
\begin{equation}\label{casesB}
{^R}\hspace*{-3pt}D_-^\mu u=D^k(I_-^{k-\mu} u)=(1+x)^{-\mu} \psi, \quad \text{for some}\;\; \psi\in {\mathcal P}_{N}.
\end{equation}
Thus,  \eqref{Dcrprop} is verified, from which \eqref{singular} follows immediately. 
\end{proof}

\begin{rem}\label{singularrmk} This implication of  Lemma \ref{newsolut} is that 
\begin{itemize}
\item[(i)]  if a FDE has a smooth solution, the  source term might have a singular behaviour; 
 \item[(ii)] 
conversely, for a  FDE with smooth inputs,  the solution might possess  singularity. 
\end{itemize}
To achieve  spectrally accurate approximation for some prototype FDEs pertaining to  the latter case,  the recent works \cite{zayernouri2013fractional,CSW2014}  proposed to approximate the singular solutions by  using Jacobi polyfractonomials and general Jacobi functions, i.e., the basis of ${\mathcal F}_N^{(\nu)}.$  \qed
\end{rem}

Observe from \eqref{singular} that  the Riemann-Liouville fractional derivative  of any polynomial tends to infinity as  $x_0\to -1.$  
This brings about  some inconvenience for the computation of the related F-PSDM and  implementation of the collocation scheme.  
This inspires us to multiply both sides of \eqref{fpsd}  by the singular factor $(1+x)^\mu,$ leading to the modified   Riemann-Liouville fractional  pseudospectral differentiation:
\begin{equation}\label{fpsdmod}
\big({^R}\hspace*{-2pt} {\widehat D}^{\mu}_-u\big)(x)=\sum_{j=0}^N u(x_j) \big({^R}\hspace*{-2pt} {\widehat D}^{\mu}_- h_j\big)(x)\;\;\;\; {\rm where}\;\;\;  
{^R}\hspace*{-2pt} {\widehat D}^{\mu}_-:= (1+x)^\mu {^R}\hspace*{-2pt}{D}^{\mu}.
\end{equation}
With such a modification, we can recover the Riemann-Liouville fractional derivative values at $x_i\not=-1$ by
\begin{equation}\label{addresul2}
\big({^R}\hspace*{-2pt} {D}^{\mu}_- u\big)(x_i)=(1+x_i)^{-\mu}\big({^R}\hspace*{-2pt} {\widehat D}^{\mu}_-u\big)(x_i),\quad 1\le i\le N. 
 \end{equation}
 Correspondingly,  we can  define the modified factional integral and state some important properties as follows. 
 \begin{lmm}\label{Imodify}   Let $u\in{\mathcal P}_N$ and $\{h_j\}$ be the Lagrange interpolating basis polynomials at JGL points as before, and define 
 \begin{equation}\label{eqnasd}
 \hat I_-^\mu =(1+x)^{-\mu} I_-^\mu,\quad {^R}\hspace*{-2pt} {\widehat D}^{\mu}_-:= (1+x)^\mu\, {^R}\hspace*{-2pt}{D}^{\mu}_-,\quad\forall\, \mu\in {\mathbb R}^+.
 \end{equation} 
Then we have 
 \begin{equation}\label{newpropC}
 \hat I_-^\mu u, {^R}\hspace*{-2pt} {\widehat D}^{\mu}_-u\in {\mathcal P}_N,\quad {}_0{\mathcal P}_N={\rm span}\big\{\hat I_-^\mu h_j\,:\, 1\le j\le N \big\},
 \end{equation}
 where ${}_0{\mathcal P}_N=\{\phi\in {\mathcal P}_N\,:\, \phi(-1)=0\}.$
 \end{lmm}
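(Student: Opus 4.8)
The plan is to establish the three assertions of Lemma \ref{Imodify} by directly invoking the Bateman-type formulas of Corollary \ref{twospecial}, since the whole point is that applying $I_-^\mu$ to a polynomial produces $(1+x)^\mu$ times a polynomial, and the modification by $(1+x)^{-\mu}$ exactly strips off that singular factor. First I would take $u\in{\mathcal P}_N$ and expand it in the Legendre basis, $u=\sum_{n=0}^N \hat u_n P_n$. Applying $I_-^\mu$ term by term and using \eqref{specaseAB}, I get
\begin{equation}\label{proofImod1}
I_-^\mu u = \sum_{n=0}^N \hat u_n \frac{n!}{\Gamma(n+\mu+1)}(1+x)^\mu P_n^{(-\mu,\mu)}(x) = (1+x)^\mu\, \phi(x),
\end{equation}
where $\phi=\sum_{n=0}^N \hat u_n \frac{n!}{\Gamma(n+\mu+1)} P_n^{(-\mu,\mu)}\in{\mathcal P}_N$ (each $P_n^{(-\mu,\mu)}$ is genuinely of degree $n$, since the degree-reduction condition \eqref{reduction} would require $-(n-\mu+\mu)=-n\in{\mathbb N}$ with $1\le -n\le n$, which is impossible). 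Hence $\hat I_-^\mu u=(1+x)^{-\mu}I_-^\mu u=\phi\in{\mathcal P}_N$, giving the first half of \eqref{newpropC}. For the ${^R}\hspace*{-2pt}{\widehat D}^{\mu}_-$ half, I would note that by \eqref{casesB} in the proof of Lemma \ref{newsolut} (or directly from \eqref{specaseAB2} applied after writing $I_-^{k-\mu}u$ in the $(1+x)^{k-\mu}P_n^{(-(k-\mu),k-\mu)}$ basis and differentiating $k$ times), ${^R}\hspace*{-2pt}D_-^\mu u=(1+x)^{-\mu}\psi$ for some $\psi\in{\mathcal P}_N$; multiplying by $(1+x)^\mu$ gives ${^R}\hspace*{-2pt}{\widehat D}^{\mu}_-u=\psi\in{\mathcal P}_N$.

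For the second statement of \eqref{newpropC}, I would argue as follows. Each $\hat I_-^\mu h_j$ lies in ${\mathcal P}_N$ by the first part, and since $h_j\in C(\bar\Lambda)$, evaluating \eqref{proofImod1} at $x=-1$ shows $(I_-^\mu h_j)(-1)=0$, so $\hat I_-^\mu h_j=(1+x)^{-\mu}I_-^\mu h_j$ vanishes at $x=-1$ as well — wait, this needs a little care: $\hat I_-^\mu h_j=\phi_j$ is the polynomial in \eqref{proofImod1}, and whether $\phi_j(-1)=0$ is not automatic. The cleaner route is: the map $h_j\mapsto \hat I_-^\mu h_j$ is, on ${\mathcal P}_N$, the linear map induced by ${^R}\hspace*{-2pt}\widehat D^\mu_-\circ(\,\cdot\,)$'s inverse; more concretely, by \eqref{rulesa} we have ${^R}\hspace*{-2pt}D_-^\mu I_-^\mu v=v$, hence ${^R}\hspace*{-2pt}{\widehat D}^{\mu}_-\hat I_-^\mu v=(1+x)^\mu{^R}\hspace*{-2pt}D_-^\mu\big((1+x)^{-\mu}(1+x)^\mu I_-^\mu v\cdot(1+x)^{-\mu}\big)$ — let me instead just say ${^R}\hspace*{-2pt}{\widehat D}^{\mu}_-\hat I_-^\mu v = (1+x)^\mu {^R}\hspace*{-2pt}D_-^\mu I_-^\mu v=(1+x)^\mu v$ is wrong by a factor; the correct identity from \eqref{specaseAB}--\eqref{specaseAB2} is that $v=\sum\hat v_n P_n \mapsto \hat I_-^\mu v=\sum \hat v_n\frac{n!}{\Gamma(n+\mu+1)}P_n^{(-\mu,\mu)}\mapsto {^R}\hspace*{-2pt}\widehat D^\mu_-$ of that $=v$. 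So $\hat I_-^\mu$ and ${^R}\hspace*{-2pt}{\widehat D}^\mu_-$ are mutually inverse linear bijections of ${\mathcal P}_N$ onto ${\mathcal P}_N$. Then $\{\hat I_-^\mu h_j\}_{j=0}^N$ is a basis of ${\mathcal P}_N$; to get the span over $j=1,\dots,N$ only, I would identify the missing direction: since $\sum_{j=0}^N h_j\equiv 1$ (partition of unity) and $\hat I_-^\mu 1 = \frac{1}{\Gamma(\mu+1)}\cdot 0! \cdot P_0^{(-\mu,\mu)}=\frac{1}{\Gamma(\mu+1)}$ is a nonzero constant, dropping $j=0$ removes exactly one basis vector from a basis of ${\mathcal P}_N$, so $\mathrm{span}\{\hat I_-^\mu h_j:1\le j\le N\}$ is an $N$-dimensional subspace of ${\mathcal P}_N$. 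It remains to check this subspace is exactly ${}_0{\mathcal P}_N$: by \eqref{proofImod1} applied to $h_j$, the value $(\hat I_-^\mu h_j)(-1)$ equals $\hat{(h_j)}_0\cdot\frac{1}{\Gamma(\mu+1)}P_0^{(-\mu,\mu)}(-1)=\frac{1}{\Gamma(\mu+1)}\hat{(h_j)}_0$ where $\hat{(h_j)}_0=\frac12\int_{-1}^1 h_j\,dx=\frac12\omega_j$ (wait — the Legendre coefficient $\hat{(h_j)}_0=\frac{1}{2}\int_{-1}^1 h_j\,dx$, which is $\frac12\omega_j\ge$ something nonzero for JGL weights, so actually $(\hat I_-^\mu h_j)(-1)\neq0$ individually). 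The point is rather that a \emph{combination} $\sum_{j=1}^N c_j\hat I_-^\mu h_j = \hat I_-^\mu\big(\sum_{j=1}^N c_j h_j\big)$ vanishes at $-1$ iff $\sum_{j=1}^N c_j h_j(-1)=0$; since $h_j(-1)=\delta_{j0}$, every such combination with $j\ge1$ has $\sum c_j h_j(-1)=0$ automatically. So the $N$-dimensional space $\mathrm{span}\{\hat I_-^\mu h_j:1\le j\le N\}$ is contained in the $N$-dimensional space ${}_0{\mathcal P}_N$, forcing equality.

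\textbf{Main obstacle.} The routine verifications of \eqref{newpropC}'s first part are a direct consequence of Corollary \ref{twospecial}; the genuinely delicate point is the second part — pinning down \emph{why} discarding the single index $j=0$ yields precisely ${}_0{\mathcal P}_N$ and nothing smaller. The clean argument, as sketched above, is: (a) $\hat I_-^\mu$ is a linear \emph{bijection} of ${\mathcal P}_N$ (invertibility follows because each coefficient $\frac{n!}{\Gamma(n+\mu+1)}$ is nonzero and $\{P_n^{(-\mu,\mu)}\}_{n=0}^N$ is a basis of ${\mathcal P}_N$), so $\{\hat I_-^\mu h_j\}_{j=0}^N$ is automatically a basis of ${\mathcal P}_N$ (as $\{h_j\}$ is); (b) for $j\ge 1$ one has $h_j(-1)=0$, hence by continuity $(I_-^\mu h_j)(-1)=0$ and in fact $\hat I_-^\mu h_j\in{}_0{\mathcal P}_N$ — here I must double-check that $(1+x)^{-\mu}I_-^\mu h_j$ still vanishes at $-1$, i.e. that the polynomial $\phi_j$ in \eqref{proofImod1} has $\phi_j(-1)=0$; this uses $h_j(-1)=0\Rightarrow \hat{(h_j)}$ expanded so that... actually the correct and simplest justification is $\hat I_-^\mu h_j = {^R}\hspace*{-2pt}({\widehat D}^\mu_-)^{-1}$-image and the defining relation $\sum_{j\ge1}c_j\hat I_-^\mu h_j=\hat I_-^\mu(w)$ with $w(-1)=0$; evaluating via \eqref{proofImod1}, $(\hat I_-^\mu w)(-1)$ is a fixed nonzero multiple of the $0$-th Legendre coefficient of $w$, which need not vanish just because $w(-1)=0$. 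So the honest obstacle is to show the constant term drops out correctly; the resolution the authors presumably intend is that $I_-^\mu h_j$ vanishes at $-1$ \emph{to order $\mu$} (it equals $(1+x)^\mu\phi_j$), and then $\hat I_-^\mu h_j=\phi_j\in{\mathcal P}_N$ while the condition $\phi_j\in{}_0{\mathcal P}_N$ comes from a more careful look: one should instead directly verify that the $N$ functions $\hat I_-^\mu h_j$, $j=1,\dots,N$, are linearly independent (immediate from bijectivity of $\hat I_-^\mu$ and independence of $h_1,\dots,h_N$) and each lies in ${}_0{\mathcal P}_N$ — the latter being the real content, which I would prove by writing $h_j = \ell_j\cdot\frac{1+x}{2}$-type factorization for $j\ge1$ (possible since $h_j(-1)=0$) and invoking \eqref{newbatemanam} with $\beta\ge$ an integer shift to see $I_-^\mu$ of $(1+x)\times(\text{poly})$ is $(1+x)^{1+\mu}\times(\text{poly})$, so $\hat I_-^\mu h_j$ carries a factor $(1+x)$ and hence vanishes at $-1$. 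That factorization step is the crux; everything else is bookkeeping with Corollary \ref{twospecial}.
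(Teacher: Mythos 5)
Your final argument is correct and is essentially the paper's own proof: the first part of \eqref{newpropC} via \eqref{specaseAB} and \eqref{casesB}, and the second part by noting that $\hat I_-^\mu$ maps a basis of ${\mathcal P}_N$ to a basis (so the images of $h_1,\dots,h_N$ are independent) and that $h_j(-1)=0$ lets you factor out $(1+x)$ and apply \eqref{newbatemanam} with $\beta=1$ (the paper takes $\alpha=\mu$, i.e.\ expands in $(1+x)P_n^{(\mu,1)}$) to conclude $(\hat I_-^\mu h_j)(-1)=0$, then finish by dimension count. The discarded detour in the middle (the claim that $(\hat I_-^\mu w)(-1)$ is a multiple of the zeroth Legendre coefficient of $w$) is wrong since $P_n^{(-\mu,\mu)}(-1)\neq 0$ for all $n$, but you do not rely on it, so the proof as finally assembled stands.
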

 \begin{proof} It is clear that by \eqref{casesB},  ${^R}\hspace*{-2pt} {\widehat D}^{\mu}_-u\in {\mathcal P_N},$ and  by \eqref{specaseAB} and \eqref{eqnasd},
 \begin{equation}\label{proppf}
 \hat I_{-}^\mu\big\{P_n(x)\big\}=\frac{n!}{\Gamma(n+\mu+1)}
 P_n^{(-\mu,\mu)}(x),\quad \mu\in {\mathbb R}^+.
 \end{equation}
 Note that for any real $\mu>0,$ $P_n^{(-\mu,\mu)}(x)$ is a polynomial of degree $n$  (cf. \cite[P. 64]{szeg75}).  Thus, we have 
 \begin{equation}\label{IPn}
 {\mathcal P}_N={\rm span}\big\{\hat I_{-}^\mu P_n\,:\,0\le n\le N\big\}= {\rm span}\big\{\hat I_{-}^\mu h_j\,:\,0\le j\le N\big\},
 \end{equation}
 and $\hat I_-^\mu u\in {\mathcal P}_N.$
 
We now  show   $(\hat I_-^\mu h_j)(-1)=0$ for $1\le j\le N.$ It is clear that $\big\{(1+x)P_n^{(\mu,1)}\big\}_{n=0}^{N-1}$ forms a basis of ${}_0{\mathcal P}_N,$  and by  \eqref{newbatemanam} with $\alpha=\mu$ and $\beta=1,$ 
\begin{equation}\label{newbatemanamC}
\hat I_{-}^\mu \big\{(1+x) P_n^{(\mu,1)}(x)\big\}=\frac{(n+1)!}{\Gamma(n+\mu+2)}
(1+x)P_n^{(0,1+\mu)}(x).
\end{equation}
Since $h_j\in {}_0{\mathcal P}_N,$  the identity \eqref{newbatemanamC} implies   $(\hat I_-^\mu h_j)(-1)=0$ for $1\le j\le N.$
 \end{proof}

\subsection{Caputo fractional pseudospectral differentiation matrices}\label{sub:matrixDiff}
As before, we use boldface uppercase  (resp. lowercase) letters   to denote matrices (resp. vectors), and simply  denote the entries  of a matrix $\bs A$ by $\bs A_{ij}.$   
Introduce the Caputo  F-PSDM of order $\mu:$  
\begin{equation}\label{cFPSDM}
{^C}\hspace*{-3pt}{\bs D}^{(\mu)}\in{\mathbb R}^{(N+1)\times (N+1)},\quad {^C}\hspace*{-3pt}{\bs D}^{(\mu)}_{ij}={^C}\hspace*{-3pt}D_-^\mu h_j(x_i).
\end{equation}
In particular,  for $\mu=k\in {\mathbb N},$ we denote $\bs D^{(k)}={^C}\hspace*{-3pt}{\bs D}^{(k)}$ and $\bs D={\bs D}^{(1)}.$ 

Remarkably,   the higher order Caputo fractional PSDM at JGL points  can be computed by using the following  recursive relation.
\begin{thm}\label{Thm:recurvere} Let  $\mu\in(0,1).$  Then we have
\begin{equation}\label{Drecu}
{}^C\hspace*{-3pt}{\bs D}^{(k+\mu)}= {}^C\hspace*{-3pt}{\bs D}^{(\mu)} {\bs D}^{(k)}={}^C\hspace*{-3pt}\bs D^{(\mu)}{\bs D}^k,\;\;\; \;  k\in{\mathbb N},
\end{equation}
where $\bs D^k$ stands for  the product of $k$ copies of the first-order PSDM at JGL points.  
\end{thm}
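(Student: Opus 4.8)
The plan is to reduce the matrix identity to an operator identity on $\mathcal{P}_N$ together with the exactness of pseudospectral differentiation on polynomials. First I would record the key composition rule for Caputo derivatives of polynomials: since $\mu\in(0,1)$ we have $k+\mu\in(k,k+1)$, so the defining formula \eqref{left-fra-der-c} (with ceiling equal to $k+1$) gives, for any $u\in\mathcal{P}_N$,
\[
{^C}\hspace*{-3pt}D_-^{k+\mu} u = I_-^{\,1-\mu}\big(D^{k+1}u\big) = I_-^{\,1-\mu}\big(D(D^k u)\big) = {^C}\hspace*{-3pt}D_-^{\mu}\big(D^k u\big),
\]
where the last equality is again \eqref{left-fra-der-c}, now applied with order $\mu\in(0,1)$ to the polynomial $D^k u$. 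All these manipulations are legitimate because $D^k u$ is a polynomial, hence smooth.

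Next I would translate this into the claimed matrix relation. Fix $u\in\mathcal{P}_N$ and set $\bs u=(u(x_0),\dots,u(x_N))^{\!\top}$. Since $D^k u\in\mathcal{P}_{N-k}\subset\mathcal{P}_N$, the JGL interpolation reproduces it exactly, $\mathcal{I}_N(D^k u)=D^k u$, so $D^k u=\sum_{j=0}^N (D^k u)(x_j)\,h_j$. Evaluating the operator identity at the node $x_i$ and invoking the definition \eqref{cFPSDM} of ${^C}\hspace*{-3pt}{\bs D}^{(\mu)}$ gives
\[
{^C}\hspace*{-3pt}D_-^{k+\mu}u(x_i) = \sum_{j=0}^N (D^k u)(x_j)\,{^C}\hspace*{-3pt}D_-^{\mu}h_j(x_i) = \big({^C}\hspace*{-3pt}{\bs D}^{(\mu)}\,\bs v\big)_i,\qquad \bs v:=\big((D^k u)(x_0),\dots,(D^k u)(x_N)\big)^{\!\top}.
\]
By the exactness of the integer-order pseudospectral differentiation \eqref{DIn} we have $\bs v=\bs D^{(k)}\bs u$, while the left-hand side is $\big({^C}\hspace*{-3pt}{\bs D}^{(k+\mu)}\bs u\big)_i$ by \eqref{cFPSDM}. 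Thus $\big({^C}\hspace*{-3pt}{\bs D}^{(k+\mu)}\bs u\big)_i=\big({^C}\hspace*{-3pt}{\bs D}^{(\mu)}\bs D^{(k)}\bs u\big)_i$ for every $u\in\mathcal{P}_N$; taking $u=h_0,\dots,h_N$ lets $\bs u$ range over the standard basis of $\mathbb{R}^{N+1}$, so ${^C}\hspace*{-3pt}{\bs D}^{(k+\mu)}={^C}\hspace*{-3pt}{\bs D}^{(\mu)}\bs D^{(k)}$.

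For the final equality ${^C}\hspace*{-3pt}{\bs D}^{(\mu)}\bs D^{(k)}={^C}\hspace*{-3pt}{\bs D}^{(\mu)}\bs D^{k}$, I would use the classical fact that the integer-order pseudospectral differentiation matrix at JGL points satisfies $\bs D^{(k)}=\bs D^{k}$: for $u\in\mathcal{P}_N$ one has $Du\in\mathcal{P}_{N-1}\subset\mathcal{P}_N$, so $\mathcal{I}_N(Du)=Du$ and hence $D^2 u=D(\mathcal{I}_N(Du))$; iterating this yields $\bs D^{(k)}=\bs D\,\bs D^{(k-1)}=\dots=\bs D^{k}$. This completes the chain of identities.

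I do not expect any real obstacle here; the only points demanding care are that every "exactness" step is justified because differentiation keeps a degree-$N$ polynomial in $\mathcal{P}_N$ and $\mathcal{I}_N$ fixes $\mathcal{P}_N$ pointwise (so no aliasing enters), and that the Caputo composition rule ${^C}\hspace*{-3pt}D_-^{k+\mu}={^C}\hspace*{-3pt}D_-^{\mu}\circ D^k$ holds over the full range $k\in\mathbb{N}$, $\mu\in(0,1)$, which is immediate from \eqref{left-fra-der-c} since both sides equal $I_-^{\,1-\mu}D^{k+1}$.
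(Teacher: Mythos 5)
Your proposal is correct, and it follows essentially the same route as the paper: both arguments rest on the identity ${^C}\hspace*{-3pt}D_-^{k+\mu}u=I_-^{1-\mu}D^{k+1}u={^C}\hspace*{-3pt}D_-^{\mu}(D^k u)$ for polynomials combined with the exactness of JGL Lagrange interpolation on ${\mathcal P}_N$ (the paper substitutes $u=h_j^{(k)}$ into the exact expansion of $u'$ and applies $I_-^{1-\mu}$, which is the same computation organized nodewise), and then the relation ${\bs D}^{(k)}={\bs D}^k$, which you re-derive while the paper cites it from the literature.
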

\begin{proof}  For any $u\in {\mathcal P}_N,$ we have
\begin{equation}\label{upN}
u'(x)=\sum_{l=0}^N u(x_l) h_l'(x).
\end{equation}
Taking $u=h'_j$ in \eqref{upN},  leads to
\begin{equation}\label{upN2}
h_j''(x)=\sum_{l=0}^N h_j'(x_l) h_l'(x),
\end{equation}
which, together with  the definition \eqref{left-fra-der-c},  implies
\begin{equation}\label{upN3}
{}^C\hspace*{-3pt}{ D}^{1+\mu}_- h_j(x)=I_-^{1-\mu} h_j''(x) =\sum_{l=0}^N h_j'(x_l) I_-^{1-\mu}h_l'(x)=
\sum_{l=0}^N h_j'(x_l) {}^C\hspace*{-3pt}{ D}^{\mu}_-h_l(x).
\end{equation}
Taking $x=x_i$ in the above, we obtain the matrix identity:
\begin{equation}\label{Drecu2}
{}^C\hspace*{-3pt}{\bs D}^{(1+\mu)}= {}^C\hspace*{-3pt}{\bs D}^{(\mu)}{\bs D},\quad \mu\in (0,1). 
\end{equation}
This leads to \eqref{Drecu} with $k=1.$
Taking $u=h_j^{(k)}(x)$ in \eqref{upN},  we can derive the first identity in \eqref{Drecu} in the same fashion.

Using the property (see \cite[Thm. 3.10]{ShenTangWang2011}):
\begin{equation}\label{ArelationPSDM}
{\bs D}^{(k)}=\bs D^k,\quad k\in {\mathbb N},
\end{equation}
 we obtain  the second identity in \eqref{Drecu}. 
\end{proof}

It is seen from Theorem \ref{Thm:recurvere}   that the computation of Caputo F-PSDM of any order at JGL points boils down to computing  the first-order usual PSDM ${\bs D}$ (whose explicit formula can be found in e.g., \cite{ShenTangWang2011}), and the Caputo F-PSDM ${}^C\hspace*{-3pt}{\bs D}^{(\mu)}$  with $\mu\in (0,1).$  We present the formulas below. 
\begin{thm}\label{JacobimatCa}  Let $\big\{x_j=x_{N,j}^{(\alpha,\beta)}\big\}_{j=0}^N$ with $\alpha,\beta>-1$ and $x_0=-1$ be the JGL points, and let 
$\big\{\omega_j=\omega_{N,j}^{(\alpha,\beta)}\big\}_{j=0}^N$ be the corresponding quadrature weights.   Then the entries of  
${}^C\hspace*{-3pt}{\bs D}^{(\mu)}$  with $\mu\in (0,1)$  can be computed   by 
\begin{align}\label{DmuJGL}
 {}^C\hspace*{-3pt}{\bs D}^{(\mu)}_{ij}
=(1+x_{i})^{1-\mu} \sum_{l=1}^N    \frac{(l-1)!}{\Gamma(l+1-\mu)}\, s_{lj}\,  P_{l-1}^{(\mu-1,1-\mu)}(x_{i})\,, 
\end{align}
for $0\le i,j\le N,$ where
\begin{align}\label{slj2}
 s_{lj}=\frac 1 2  \sum_{n=l-1}^N  (n+\alpha+\beta+1)  {}^{(\af+1,\bt+1)\!}\bs C_{l-1,n-1}^{(0,0)}\, t_{nj},\;\;\;   t_{nj}:= \frac{\omega_j}{\tilde \gamma_n^{(\af,\bt)}} P_n^{(\af,\bt)}(x_j),
\end{align}
 $\big\{{}^{(\af+1,\bt+1)\!}\bs C_{l-1,n-1}^{(0,0)} \big\}$ are the Jacobi-to-Legendre connection coefficients,  and  $\{\tilde \gamma_n^{(\af,\bt)}\}$ are  defined in \eqref{tildgamma}.  In particular, for $\alpha=\beta=0,$  we can alternatively compute the coefficients $\{s_{lj}\}$ by 
\begin{equation}\label{tnjnj}
  s_{lj}=\frac 1{\gamma_{l-1}}\big\{ \delta_{jN}+(-1)^l \delta_{j0}- \omega_j P_{l-1}'(x_j)\big\}, \quad \gamma_{l-1}=\frac 2 {2l-1}.
\end{equation}
 \end{thm}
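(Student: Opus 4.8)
The plan is to expand each Lagrange basis polynomial $h_j$ in the Jacobi basis, differentiate once, re-expand the derivative in the Legendre basis (this is where the connection coefficients enter), and then apply the Caputo fractional derivative term by term using Corollary~\ref{twospecial}. More precisely, starting from \eqref{jacbasisfun} we have $h_j=\sum_{n=0}^N t_{nj}P_n^{(\af,\bt)}$, so $h_j'=\sum_{n=1}^N t_{nj}\big(P_n^{(\af,\bt)}\big)'$. Using the standard derivative formula $\big(P_n^{(\af,\bt)}\big)'=\tfrac12(n+\af+\bt+1)P_{n-1}^{(\af+1,\bt+1)}$, we get $h_j'=\sum_{n=1}^N \tfrac12(n+\af+\bt+1)t_{nj}P_{n-1}^{(\af+1,\bt+1)}$, which is a polynomial of degree $N-1$. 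Next, re-expand this in Legendre polynomials via the connection transform \eqref{ucoefrela}: writing $h_j'=\sum_{l=1}^N s_{lj}P_{l-1}$, collecting the coefficient of $P_{l-1}$ gives exactly $s_{lj}=\tfrac12\sum_{n=l-1}^N (n+\af+\bt+1)\,{}^{(\af+1,\bt+1)}\bs C_{l-1,n-1}^{(0,0)}\,t_{nj}$, which is \eqref{slj2}.

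Then I would invoke the identity ${}^C\!D_-^\mu h_j = I_-^{1-\mu}h_j'$ (valid for $\mu\in(0,1)$, i.e. $k=1$ in \eqref{left-fra-der-c}), and apply $I_-^{1-\mu}$ to the Legendre expansion of $h_j'$ term by term. By \eqref{specaseAB} with $\rho=1-\mu$,
\begin{equation*}
I_-^{1-\mu}\{P_{l-1}(x)\}=\frac{(l-1)!}{\Gamma(l+1-\mu)}(1+x)^{1-\mu}P_{l-1}^{(\mu-1,1-\mu)}(x).
\end{equation*}
Summing over $l$ and evaluating at $x=x_i$ yields precisely \eqref{DmuJGL}. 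For the special case $\alpha=\beta=0$, the alternative formula \eqref{tnjnj} for $s_{lj}$ should follow by computing the Legendre coefficients of $h_j'$ directly from $s_{lj}=\gamma_{l-1}^{-1}\int_{-1}^1 h_j'(x)P_{l-1}(x)\,dx$: integrate by parts to move the derivative onto $P_{l-1}$, producing the boundary terms $h_j(1)P_{l-1}(1)-h_j(-1)P_{l-1}(-1)=\delta_{jN}+(-1)^{l-1}(-1)\delta_{j0}$ (using $h_j(x_k)=\delta_{jk}$, $P_{l-1}(1)=1$, $P_{l-1}(-1)=(-1)^{l-1}$) minus $\int_{-1}^1 h_j(x)P_{l-1}'(x)\,dx$, and the latter integral is evaluated exactly by the Legendre-Gauss-Lobatto quadrature \eqref{newquad} since $h_jP_{l-1}'\in\mathcal P_{2N-1}$, giving $\sum_k \omega_k h_j(x_k)P_{l-1}'(x_k)=\omega_j P_{l-1}'(x_j)$. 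A sign check on the $\delta_{j0}$ term (note $(-1)^{l-1}\cdot(-1)=(-1)^l$) reconciles this with \eqref{tnjnj}.

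The only genuinely delicate point is bookkeeping: keeping the index shifts consistent ($P_n^{(\af,\bt)}\mapsto P_{n-1}^{(\af+1,\bt+1)}$ drops the degree by one, so the Legendre index runs from $l-1$, not $l$), and verifying that the lower summation limit in \eqref{slj2} is $n=l-1$ because ${}^{(\af+1,\bt+1)}\bs C^{(0,0)}$ is upper triangular in its two (shifted) indices. None of the steps is hard individually; the main obstacle is simply assembling the three transforms — Jacobi differentiation, the connection transform, and the Bateman integral formula — in the right order without an index slip, and confirming the degree-preservation claim that $P_{l-1}^{(\mu-1,1-\mu)}$ genuinely has degree $l-1$ (which holds since $\mu-1>-1$ fails but the reduction condition \eqref{reduction} is not met for these parameters, so no degree drop occurs — this is the same fact cited after \eqref{proppf}).
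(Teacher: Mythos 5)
Your proposal is correct and follows essentially the same route as the paper's own proof in Appendix A: expand $h_j'$ via the Jacobi derivative formula, pass to Legendre coefficients through the connection transform to get \eqref{slj2}, apply $I_-^{1-\mu}$ termwise using \eqref{specaseAB}, and in the Legendre case recover \eqref{tnjnj} by integration by parts together with the exactness of the LGL quadrature. The only blemish is the closing aside (the claim that ``$\mu-1>-1$ fails'' is false for $\mu\in(0,1)$, and the degree of $P_{l-1}^{(\mu-1,1-\mu)}$ is irrelevant to the validity of the evaluation formula), but this does not affect the argument.
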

 To avoid the distraction from the main results, we provide the derivation of the formulas in Appendix 
 \ref{AppendixB}.

\begin{rem}\label{Newcase}  We see that in the Legendre case, we can bypass the connection problem.  It is  noteworthy that in \cite{LZL12},   the Caputo F-PSDM of order $\mu>0$ was computed largely by the derivative formula of $P_n$ and some recurrence relation of 
$I_-^\rho P_n$ built upon three-term recurrence formula of Legendre polynomials.
As shown above, the use of the compact, explicit formula \eqref{specaseAB}   leads to much concise representation and  stable computation. \qed  
   \end{rem}

\subsection{Modified  Riemann-Liouville  fractional pseudospectral differentiation matrices} 
 Introduce the matrices: 
\begin{equation}\label{newnotationA}
{^R}\hspace*{-2pt}\bs {\widehat D}^{(\mu)}, \bs {\hat  I}^{(\mu)}\in {\mathbb R}^{(N+1)\times(N+1)}\;\; {\rm where}\;\;  {^R}\hspace*{-2pt}\bs {\widehat D}^{(\mu)}_{ij}= \big({^R}\hspace*{-2pt}{\widehat D}^{(\mu)}_-h_j\big)(x_i),\;\;   \bs {\hat  I}^{(\mu)}_{ij}=\big(\hat I^\mu_{-} h_j\big)(x_i). 
\end{equation}
We can show the following important property  similar to Theorem  \ref{Thm:recurvere}.
\begin{thm}\label{Thm:recurvereRL} Let $\{h_j\}$ be the JGL interpolating basis polynomials.   Then for  $\mu\in (k-1,k)$ with $k\in {\mathbb N},$    we have 
\begin{equation}\label{DrecuRL}
{^R}\hspace*{-2pt}\bs {\widehat D}^{(\mu)}= \bs {\breve D}^{(k)} \bs {\hat I}^{(k-\mu)}, 
\end{equation}
where the entries of $\bs {\breve D}^{(k)}$ are given by 
\begin{equation}\label{entriDk}
\bs {\breve D}^{(k)}_{ij}=(1+x)^\mu D^k\big\{(1+x)^{k-\mu} h_j(x)\big\}\big|_{x=x_i},\quad 0\le i,j\le N. 
\end{equation} 
\end{thm}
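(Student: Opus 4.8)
\textbf{Proof proposal for Theorem \ref{Thm:recurvereRL}.}

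The plan is to mirror the argument of Theorem \ref{Thm:recurvere}, but working at the level of the modified operators and exploiting the composition rule ${^R}\hspace*{-2pt}D_-^\mu u = D^k (I_-^{k-\mu} u)$ from the definition \eqref{left-fra-der-rl}. The starting point is to take an arbitrary $u\in{\mathcal P}_N$ and observe, using the interpolation identity \eqref{Inuexp} together with Lemma \ref{Imodify}, that $\hat I_-^{k-\mu} u\in{\mathcal P}_N$, so we may expand it exactly through the JGL basis: $(\hat I_-^{k-\mu}u)(x)=\sum_{j=0}^N (\hat I_-^{k-\mu}u)(x_j)\, h_j(x)$. Since $\hat I_-^{k-\mu} = (1+x)^{-(k-\mu)} I_-^{k-\mu}$ by \eqref{eqnasd}, this reads $I_-^{k-\mu}u(x) = (1+x)^{k-\mu}\sum_{j=0}^N (\hat I_-^{k-\mu}u)(x_j)\, h_j(x)$.

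The key step is then to apply the modified Riemann-Liouville derivative ${^R}\hspace*{-2pt}{\widehat D}^{\mu}_- = (1+x)^\mu\, {^R}\hspace*{-2pt}D_-^\mu = (1+x)^\mu D^k I_-^{k-\mu}$ to $u$ and substitute the expansion above. Concretely, $({^R}\hspace*{-2pt}{\widehat D}^{\mu}_- u)(x) = (1+x)^\mu D^k\big(I_-^{k-\mu}u\big)(x) = (1+x)^\mu D^k\big\{(1+x)^{k-\mu}\sum_{j=0}^N (\hat I_-^{k-\mu}u)(x_j) h_j(x)\big\}$, and by linearity this equals $\sum_{j=0}^N (\hat I_-^{k-\mu}u)(x_j)\,(1+x)^\mu D^k\{(1+x)^{k-\mu}h_j(x)\}$. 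Evaluating at $x=x_i$ gives $({^R}\hspace*{-2pt}{\widehat D}^{\mu}_- u)(x_i) = \sum_{j=0}^N \bs{\breve D}^{(k)}_{ij}\,(\hat I_-^{k-\mu}u)(x_j)$ with $\bs{\breve D}^{(k)}$ exactly as in \eqref{entriDk}. Finally, specialize $u=h_m$: then $(\hat I_-^{k-\mu}h_m)(x_j) = \bs{\hat I}^{(k-\mu)}_{jm}$ by \eqref{newnotationA}, and the left-hand side is ${^R}\hspace*{-2pt}\bs{\widehat D}^{(\mu)}_{im}$, so we obtain ${^R}\hspace*{-2pt}\bs{\widehat D}^{(\mu)}_{im} = \sum_{j=0}^N \bs{\breve D}^{(k)}_{ij}\bs{\hat I}^{(k-\mu)}_{jm}$, which is precisely the matrix identity \eqref{DrecuRL}.

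Two technical points must be checked carefully, and I expect the main obstacle to lie in the first. One must verify that $\bs{\breve D}^{(k)}$ as defined in \eqref{entriDk} is genuinely well-defined and finite: although $h_j(x)$ is a polynomial and $(1+x)^{k-\mu}h_j(x)\in{\mathcal F}_N^{(k-\mu)}$, the $k$-th derivative $D^k\{(1+x)^{k-\mu}h_j(x)\}$ produces a term of the form $(1+x)^{-\mu}\times(\text{polynomial})$, which blows up at $x_0=-1$; multiplication by $(1+x)^\mu$ must cancel this singularity so that the entry at $i=0$ makes sense. The cleanest way to see the cancellation is to note that $D^k I_-^{k-\mu}$ applied to the polynomial basis, via the Bateman-type formula \eqref{specaseAB2} (or \eqref{newbatemanam3s}), lands in ${\mathcal F}_N^{(-\mu)}$ with a clean $(1+x)^{-\mu}$ prefactor, so premultiplying by $(1+x)^\mu$ recovers a genuine polynomial; one should phrase $\bs{\breve D}^{(k)}_{ij}$ as the value of that polynomial at $x_i$. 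The second point is routine: one invokes Lemma \ref{Imodify} to legitimize the exact expansion of $\hat I_-^{k-\mu}u$ in ${\mathcal P}_N$, and uses that $\{h_j\}$ interpolates exactly on ${\mathcal P}_N$. With these in hand the chain of identities above is purely formal.
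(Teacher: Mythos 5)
Your proposal is correct and follows essentially the same route as the paper's proof: expand $\hat I_-^{k-\mu}u$ exactly in the JGL Lagrange basis (justified by Lemma \ref{Imodify}), multiply by $(1+x)^{k-\mu}$, apply $D^k$, premultiply by $(1+x)^\mu$, and evaluate at the nodes with $u$ taken to be a basis polynomial. Your extra remark about the entries of $\bs{\breve D}^{(k)}$ at $x_0=-1$ is a valid point the paper treats only implicitly, and it is indeed resolved as you say (the product rule shows $(1+x)^\mu D^k\{(1+x)^{k-\mu}h_j\}$ is a genuine polynomial, cf. \eqref{k11mat}).
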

\begin{proof} By \eqref{newpropC}, we can write that  for any $u\in {\mathcal P}_N,$  
\begin{equation}\label{newexpA}
(\hat I^{k-\mu}_-u)(x)=\sum_{l=0}^N (\hat I^{k-\mu}_-u)(x_l) h_l(x),
\end{equation}
Multiplying both sides by $(1+x)^{k-\mu},$ and using \eqref{eqnasd}, we find 
\begin{equation}\label{newexpB}
 (I^{k-\mu}_-u)(x)=\sum_{l=0}^N (\hat I^{k-\mu}_-u)(x_l) (1+x)^{k-\mu} h_l(x), 
\end{equation}
which implies 
\begin{equation}\label{newexpC}
 \big({^R}\hspace*{-2pt} {D}^{(\mu)}_- u\big)(x)= D^k (I^{k-\mu}_-u)(x)=\sum_{l=0}^N (\hat I^{k-\mu}_-u)(x_l) D^k\big\{(1+x)^{k-\mu} h_l(x)\big\},
\end{equation}
for  $x\in (-1,1].$ To remove the singularity at $x=-1,$ we multiply both sides of \eqref{newexpC}  by  $(1+x)^\mu,$ 
and reformulate the resulted identity by the modified operator  in \eqref{fpsdmod}, leading to  
\begin{equation}\label{newexpD}
\big({^R}\hspace*{-2pt} {\widehat D}^{(\mu)}_-u\big)(x)= \sum_{l=0}^N (\hat I^{k-\mu}_-u)(x_l) \Big\{(1+x)^\mu D^k\big\{(1+x)^{k-\mu} h_l(x)\big\}\Big\}.
\end{equation}
Taking $u=h_j$ and $x=x_i$ in the above equation yields  \eqref{DrecuRL}. 
\end{proof}

Observe from  \eqref{DrecuRL} that  it suffices to compute the modified fractional integral matrix $\bs {\hat I}^{(k-\mu)}$ with $k-\mu\in (0,1),$ since  $\bs {\breve D}^{(k)}$ can be expressed   in terms of the PSDM of integer order, e.g., for $k=1,$
\begin{equation}\label{k11mat}
\bs {\breve D}^{(1)}=(1-\mu)\bs I_{N+1}+\bs \Lambda \bs D, \quad \bs \Lambda={\rm diag}\big((1+x_0),\cdots, (1+x_N)\big), 
\end{equation}
where $\bs I_{N+1}$ is an identity matrix. 
%
\begin{thm}\label{JacobimatRL}  Let $\big\{x_j=x_{N,j}^{(\alpha,\beta)}\big\}_{j=0}^N$ with $\alpha,\beta>-1$ and $x_0=-1$ be the JGL points, and let 
$\big\{\omega_j=\omega_{N,j}^{(\alpha,\beta)}\big\}_{j=0}^N$ be the corresponding quadrature weights.   Then the entries of  $\bs {\hat I}^{(\mu)}$  with $\mu\in (0,1)$  can be computed   by 
\begin{align}\label{ImuJGL}
& \bs {\hat I}^{(\mu)}_{ij}=\sum_{l=0}^N  \frac{l!\, }{\Gamma(l+\mu+1)} \, \hat s_{lj}  P_l^{(-\mu,\mu)}(x_{i}),\quad 0\le i,j\le N,  
\end{align}
where
\begin{align}\label{slj2b}
\hat s_{lj}=\sum_{n=l}^N {}^{(\af,\bt)\!}C_{ln}^{(0,0)} t_{nj},\quad  
 t_{nj}:= \frac{\omega_j}{\tilde \gamma_n^{(\af,\bt)}} P_n^{(\af,\bt)}(x_j),
\end{align}
with $\big\{{}^{(\af,\bt)\!}C_{ln}^{(0,0)}\big\}$ being the Jacobi-to-Legendre connection coefficients,   and  $\tilde \gamma_n^{(\af,\bt)}$  defined in \eqref{tildgamma}.  In particular, if $\alpha=\beta=0,$ we have $\hat s_{lj}=t_{lj}.$
\end{thm}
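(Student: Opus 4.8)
The plan is to mimic the derivation of Theorem~\ref{JacobimatCa}, using the Jacobi-to-Legendre connection problem together with the explicit integral formula \eqref{proppf} for $\hat I_{-}^\mu P_n$. First I would expand the Lagrange basis polynomial $h_j$ in the Legendre basis. From \eqref{jacbasisfun} we already have $h_j=\sum_{n=0}^N t_{nj}P_n^{(\af,\bt)}$ with $t_{nj}=\omega_j P_n^{(\af,\bt)}(x_j)/\tilde\gamma_n^{(\af,\bt)}$; applying the connection transform \eqref{ucoefrela} with $(a,b)=(0,0)$ re-expresses this as $h_j=\sum_{l=0}^N \hat s_{lj}P_l$, where $\hat s_{lj}=\sum_{n=l}^N {}^{(\af,\bt)\!}C_{ln}^{(0,0)}\,t_{nj}$, which is exactly \eqref{slj2b}. (Here I use that $\{P_n^{(\af,\bt)}\}$ and $\{P_l\}$ both span $\mathcal P_N$ and that the connection matrix is upper triangular.)

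Next I would apply the modified fractional integral operator $\hat I_{-}^\mu=(1+x)^{-\mu}I_{-}^\mu$ termwise. By linearity and \eqref{proppf}, $\hat I_{-}^\mu h_j=\sum_{l=0}^N \hat s_{lj}\,\hat I_{-}^\mu P_l=\sum_{l=0}^N \hat s_{lj}\,\frac{l!}{\Gamma(l+\mu+1)}P_l^{(-\mu,\mu)}$. Since $\mu\in(0,1)$ each $P_l^{(-\mu,\mu)}$ is a genuine polynomial of degree $l$ (as noted in the excerpt, citing \cite[P.~64]{szeg75}), so $\hat I_{-}^\mu h_j\in\mathcal P_N$, consistent with Lemma~\ref{Imodify}. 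Evaluating at the node $x=x_i$ and recalling the definition $\bs{\hat I}^{(\mu)}_{ij}=(\hat I^\mu_{-}h_j)(x_i)$ from \eqref{newnotationA} gives precisely \eqref{ImuJGL}.

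For the special case $\alpha=\beta=0$, the Jacobi polynomials $P_n^{(\af,\bt)}$ are already the Legendre polynomials $P_n$, so the connection matrix ${}^{(0,0)\!}C^{(0,0)}$ is the identity, i.e. ${}^{(0,0)\!}C_{ln}^{(0,0)}=\delta_{ln}$; substituting into \eqref{slj2b} collapses the sum to a single term and yields $\hat s_{lj}=t_{lj}$. Alternatively one observes directly that when $\af=\bt=0$ the expansion $h_j=\sum_n t_{nj}P_n$ is already the Legendre expansion, so no further transform is needed.

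I do not anticipate a serious obstacle here: the result is essentially bookkeeping built on the Bateman-type formula \eqref{specaseAB}/\eqref{proppf} and the connection transform, both already established in the excerpt. The only point requiring a little care is the justification that the termwise application of $\hat I_{-}^\mu$ is legitimate and lands in $\mathcal P_N$ — i.e. invoking that $P_l^{(-\mu,\mu)}$ does not suffer the degree reduction of \eqref{reduction} for $\mu\in(0,1)$ — but this is exactly what Lemma~\ref{Imodify} already records, so the mild "hard part" is simply citing it correctly rather than anything substantive.
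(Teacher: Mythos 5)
Your proposal is correct and follows essentially the same route as the paper: expand $h_j$ in the Legendre basis via the connection coefficients to get \eqref{slj2b}, apply $\hat I_-^\mu$ termwise using \eqref{proppf}, and evaluate at the nodes, with the Legendre case handled by noting the two expansions coincide. No gaps.
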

\begin{proof} It is essential to use the explicit formulas  in Corollary  \ref{twospecial}.  Accordingly, we  expand the  JGL Lagrange interpolating basis  polynomials  $\{h_j\}$ in terms of 
Legendre polynomials, and  resort to the connection problem to transform between the bases   as before. 
Equating  \eqref{jacbasisfun} and the new expansion leads to  
\begin{equation}\label{matrxA}
h_j(x)=\sum_{n=0}^N  t_{nj} P_n^{(\af,\bt)}(x)=\sum_{l=0}^N \hat s_{lj} P_l(x),\quad 0\le j\le N,
\end{equation} 
which defines a connection problem. Thus by 
\eqref{ucoefrela},  
\begin{equation}\label{exastD}
\hat s_{lj}=
 \sum_{n=0}^N {}^{(\af,\bt)\!}C_{ln}^{(0,0)} t_{nj}=\sum_{n=l}^N {}^{(\af,\bt)\!}C_{ln}^{(0,0)} t_{nj},
\end{equation}
where we used the property: ${}^{(\af,\bt)\!}C_{ln}^{(0,0)} =0$ if $n<l.$ Then  it follows from \eqref{proppf} immediately that for $\mu\in (0,1),$
\begin{equation}\label{JGLImu}
\bs {\hat I}^{(\mu)}_{ij}=\big(\hat I_-^\mu h_j\big)(x_i)=\sum_{l=0}^N  \frac{l!\, \hat s_{lj}}{\Gamma(l+\mu+1)}  P_l^{(-\mu,\mu)}(x_{i}),\quad 0\le i,j\le N. 
\end{equation}
This leads to the desired formulas.  

In the Legendre case,   it is clear that the expansions in \eqref{matrxA} are identical, so we have $\hat s_{lj}=t_{lj}.$
\end{proof}

We conclude this section by providing some numerical study of (discrete) eigenvalues of  F-PSDMs.    Observe from \eqref{singular} that the first row of ${^C}\hspace*{-3pt}{\bs D}^{(\mu)}$ is entirely zero, so ${^C}\hspace*{-3pt}{\bs D}^{(\mu)}$ is always  singular.  We therefore remove the ``boundary" row/column, and define 
\begin{equation}\label{submatix}
{}^C\hspace*{-3pt}{\bs D}_{{\rm in}}^{(\mu)}:=\begin{cases}
\big({^C}\hspace*{-3pt}{\bs D}^{(\mu)}_{ij}\big)_{1\le i, j\le N}, \quad& {\rm if}\;\; \mu\in (0,1), \\[6pt]
\big({^C}\hspace*{-3pt}{\bs D}^{(\mu)}_{ij}\big)_{1\le i, j\le N-1}, \quad& {\rm if}\;\; \mu\in (1,2),
\end{cases}
\quad{\rm where}\;\; {^C}\hspace*{-3pt}\bs {D}^{(\mu)}_{ij}= \big({^C}\hspace*{-3pt}{D}^{\mu}_-h_j\big)(x_i), 
\end{equation}
which is invertible and  allows for   incorporating  boundary condition(s).  Similarly,  we define 
\begin{equation}\label{submatixRL}
{}^R\hspace*{-2pt}\widehat{\bs D}_{{\rm in}}^{(\mu)}:=\begin{cases}
\big({^R}\hspace*{-2pt}\widehat{\bs D}^{(\mu)}_{ij}\big)_{1\le i, j\le N}, \; & {\rm if}\; \mu\in (0,1), \\[6pt]
\big({^R}\hspace*{-2pt}\widehat{\bs D}^{(\mu)}_{ij}\big)_{1\le i, j\le N-1}, \; & {\rm if}\; \mu\in (1,2), 
\end{cases}
\quad \text{where\;  ${^R}\hspace*{-2pt}\widehat {\bs D}^{(\mu)}_{ij}= \big({^R}\hspace*{-2pt}{\widehat D}^{\mu}_-h_j\big)(x_i).$}
\end{equation}
In Figure \ref{uhatvsuNhat00}, we illustrate the smallest and largest eigenvalues (in modulus) of these matrices. Observe that in both cases, 
the largest eigenvalue grows like $O(N^{2\mu}),$ while the smallest one remains a constant in  the Caputo case, and mildly decays with respect to $N$ in the modified RL case.

\begin{figure}[h!]
  \centering
    \includegraphics[width=0.48\textwidth]{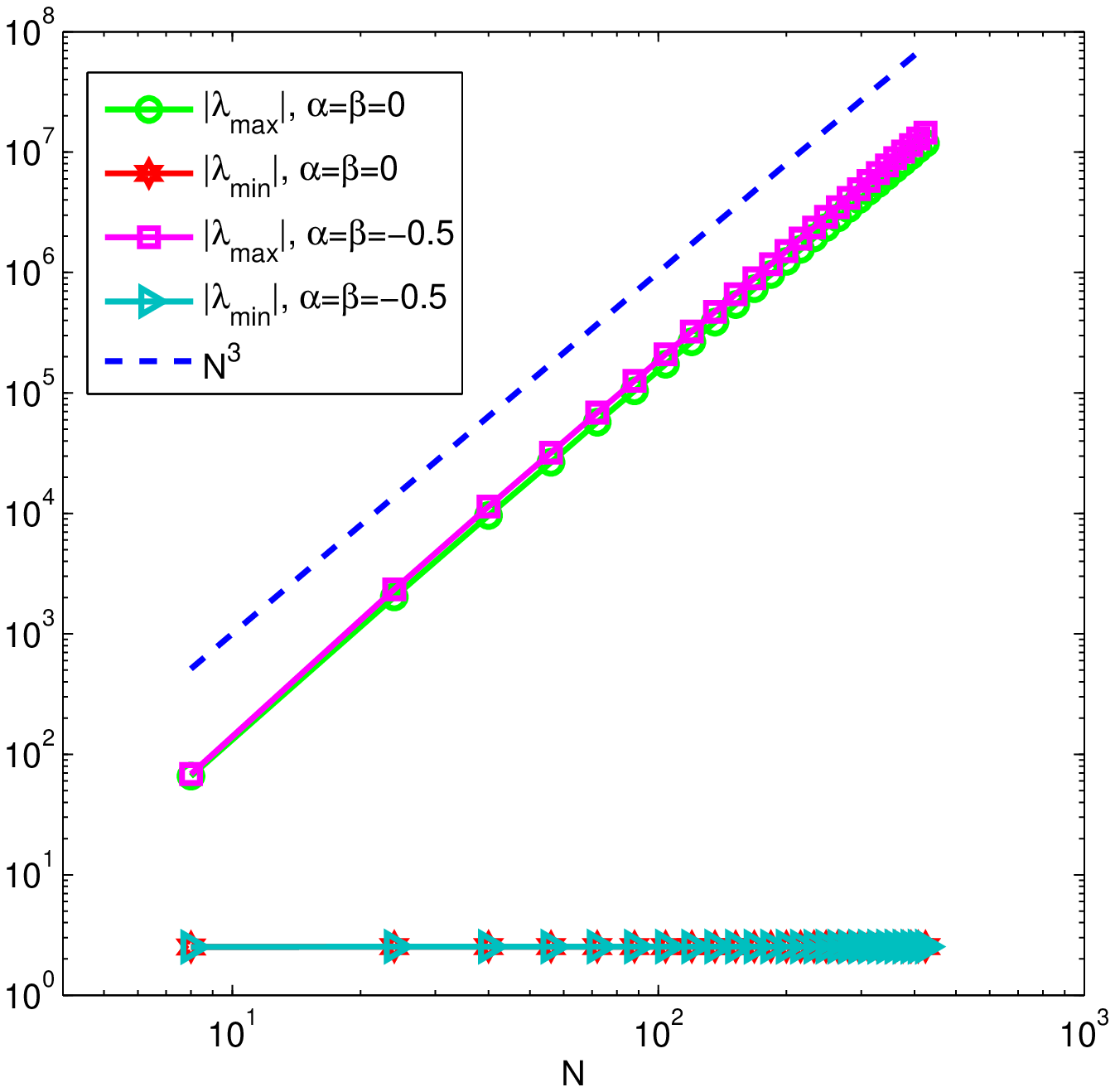}
     \includegraphics[width=0.48\textwidth]{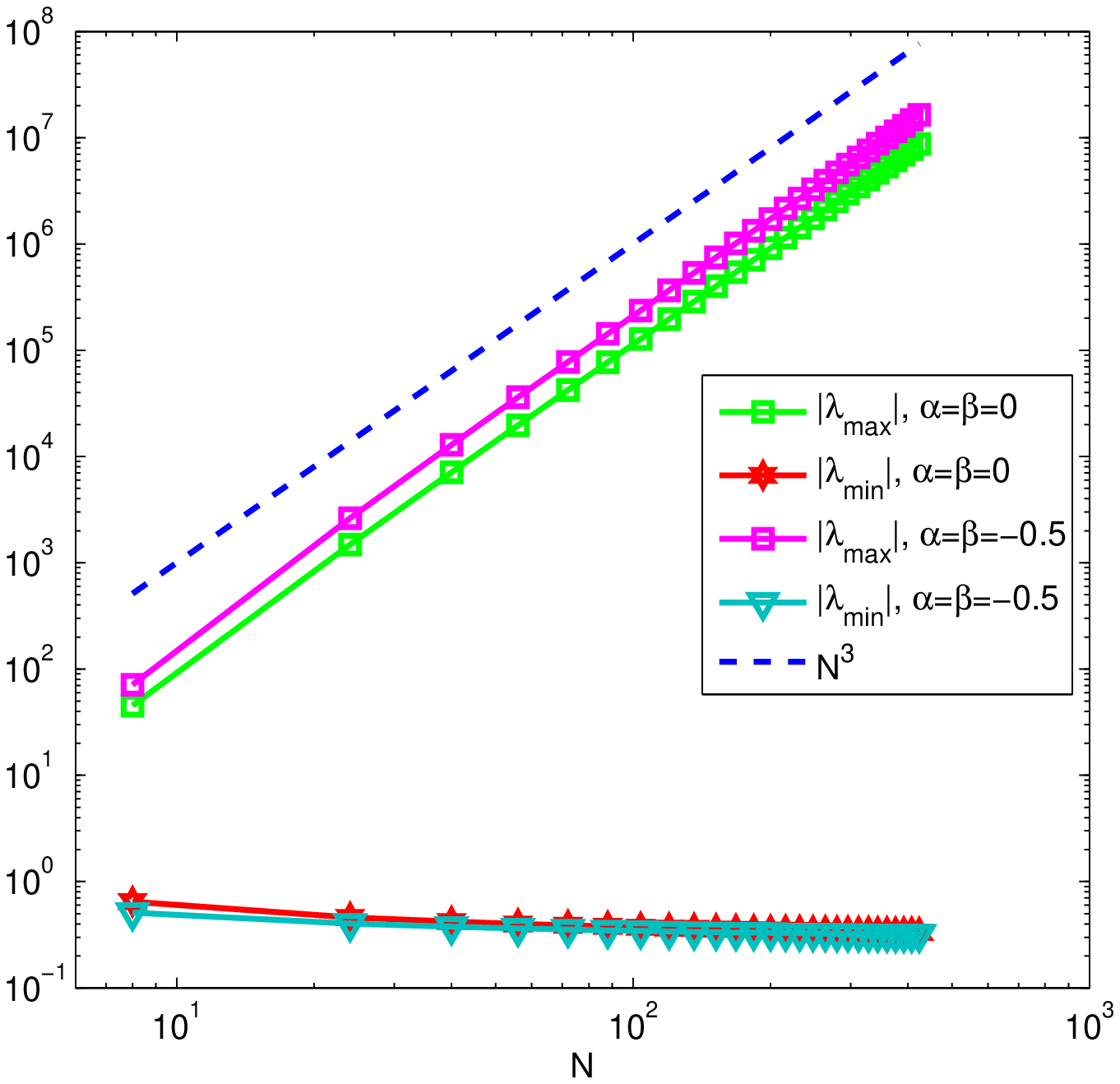}
   \caption{Maximum and minimum (in modulus) eigenvalues   of F-PSDM with $\mu=1.5$. Left:  Caputo. Right: modified Riemann-Liouville.}\label{uhatvsuNhat00}
\end{figure}

\section{Caputo fractional Birkhoff interpolation and   inverse   F-PSDM}\label{sect:caputoinverse}
\setcounter{equation}{0}
\setcounter{lmm}{0}
\setcounter{thm}{0}

As already mentioned,   the condition number of the collocation system of a FDE of order $\mu$ grows  like  $O(N^{2\mu}),$ 
so its solution  suffers from severe round-off errors, and it also becomes rather prohibitive to solve the linear system by an iterative method.  Following the spirit of \cite{CoL10,Wan.SZ14},   we introduce the {\em Caputo fractional 
Birkhoff  interpolation} that generates a new interpolating polynomial basis with remarkable properties: 
\begin{itemize} 
\item[(i)] It provides a stable way to invert the Caputo F-PSDM  in \eqref{submatix}, leading to optimal fractional integration preconditioners for  the ill-conditioned collocation schemes.   
\item[(ii)]  It  offers  a  basis for constructing well-conditioned  collocation schemes.
\end{itemize}

\subsection{Caputo fractional Birkhoff  interpolation} Let $\big\{x_j=x_{N,j}^{(\af,\bt)}\big\}_{j=0}^N$ (with $x_0=-1$ and $x_N=1$) be the  JGL points as before.  Consider the following two interpolating problems: 
\begin{itemize}
\item[(i)] {\em For $\mu\in (0,1)$, the Caputo fractional Birkhoff interpolation is to  find $p\in {\mathcal P}_N $ such that
\begin{equation}\label{BirkhoffFracprob}
{}^C\hspace*{-3pt}{ D}^{\mu}_-\,p(x_j)={}^C\hspace*{-3pt}{ D}^{\mu}_-u(x_j),\;\; 1\le j\le N; \quad p(-1)=u(-1),
\end{equation}
for any $u\in C[-1,1]$ satisfying   ${}^C\hspace*{-3pt}{ D}^{\mu}_- u\in C(-1,1].$  }

 \item[(ii)] {\em  For $\mu\in (1,2)$, the Caputo fractional Birkhoff interpolation is to  find $p\in {\mathcal P}_N $ such that
\begin{equation}\label{BirkhoffFracprob2nd}
{}^C\hspace*{-3pt}{ D}^{\mu}_-\,p(x_j)={}^C\hspace*{-3pt}{ D}^{\mu}_-u(x_j),\;\; 1\le j\le N-1; \quad p(\pm 1)=u(\pm 1),
\end{equation}
for any $u\in C[-1,1]$ satisfying   ${}^C\hspace*{-3pt}{ D}^{\mu}_- u\in C(-1,1).$ } 
\end{itemize}
\begin{rem}\label{Birkrmk} The  usual Birkhoff interpolation is comprehensively  studied  in e.g., 
the monograph \cite{BirkhoffBk}.    Typically, a polynomial Birkhoff interpolation requires  at least one point at which the function and the derivative values are not interpolated consecutively.    For example,  consider a three-point interpolation problem: find $p\in {\mathcal P}_2$  such that
$$p(-1)=u(-1),\quad p'(0)=u'(0),\quad p(1)=u(1).$$
It defines a Birkhoff interpolation problem, since the  function value at $x=0$ is not interpolated, as opposite to the Hermite interpolation.  
Due to the involvement of Caputo fractional derivatives,
we call \eqref{BirkhoffFracprob}  and \eqref{BirkhoffFracprob2nd}  the Caputo  fractional Birkhoff interpolation problems. 
 \qed
\end{rem}

As with the Lagrange interpolation, we search for a nodal basis to represent the interpolating polynomial  $p$.
More precisely,   we look for $Q_j^{\mu}\in  {\mathcal P}_N$  such that  
\begin{itemize}
\item[(i)] for $\mu\in (0,1),$
\begin{equation}\label{BirkhoffFrac}
{}^C\hspace*{-3pt}{ D}^{\mu}_-\,Q_j^\mu(x_i)=\delta_{ij},\;\;\;  1\le i\le N;\quad Q_j^{\mu}(-1)=0,\quad 1\le j\le N,
\end{equation}
with  $Q_0^\mu=1;$
\vskip 6pt 
\item[(ii)] for $\mu\in (1,2),$  
\begin{equation}\label{BirkhoffFrac2}
{}^C\hspace*{-3pt}{ D}^{\mu}_-\,Q_j^\mu(x_i)=\delta_{ij},\;\;\;  1\le i\le N-1;\quad Q_j^\mu(\pm 1)=0,\quad 1\le j\le N-1,
\end{equation}
with  $Q_0^\mu(x)=(1-x)/2$ and $Q_N^\mu(x)=(1+x)/2.$
\end{itemize}
Then, we can express the Caputo fractional Birkhoff  interpolating polynomial $p$  of \eqref{BirkhoffFracprob} and \eqref{BirkhoffFracprob2nd}, respectively, as
\begin{equation}\label{Birkintep}
p(x)=u(-1)+\sum_{j=1}^N  {}^C\hspace*{-3pt}{ D}^{\mu}_-u(x_j)\, Q_j^\mu (x),\quad \mu\in (0,1),
\end{equation}
and
\begin{equation}\label{Birkintep2nd}
p(x)=\frac{1-x} 2 u(-1)+\sum_{j=1}^{N-1}  {}^C\hspace*{-3pt}{ D}^{\mu}_-u(x_j)\, Q_j^\mu(x)+\frac{1+x} 2 u(1),\quad \mu\in (1,2).
\end{equation}
Therefore, $\{Q_j^\mu\}$ are dubbed as the {\em Caputo fractional Birkhoff interpolating basis polynomials of order $\mu$}.

Introduce the matrices 
\begin{equation}\label{newmatrix}
{\bs Q}^{(\mu)}=\begin{cases}
\big({\bs Q}^{(\mu)}_{lj}\big)_{1\le l,j\le N},\quad & {\rm if}\;\; \mu\in (0,1),\\[4pt]
\big({\bs Q}^{(\mu)}_{lj}\big)_{1\le l,j\le N-1},\quad & {\rm if}\;\; \mu\in (1,2),
\end{cases}\quad{\rm where}\;\;\;  {\bs Q}^{(\mu)}_{lj}=Q_j^\mu(x_l).
\end{equation}
Remarkably, the matrix ${\bs Q}^{(\mu)}$ is the inverse of  ${}^C\hspace*{-3pt}{\bs D}_{\rm in}^{(\mu)}$ defined  in \eqref{submatix}.
\begin{thm}\label{Th:inverse} For $\mu\in (k-1,k)$ with $k=1,2,$  we have
\begin{equation}\label{BDBD}
{\bs Q}^{(\mu)}\,  {}^C\hspace*{-3pt}{\bs D}_{\rm in}^{(\mu)}  ={}^C\hspace*{-3pt}{\bs D}_{\rm in}^{(\mu)}\, {\bs Q}^{(\mu)}={\bs I}_{N+1-k},
\end{equation}
where ${\bs I}_{N+1-k}$ is the  identity matrix of order $N+1-k.$
\end{thm}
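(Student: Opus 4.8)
The plan is to show that the Caputo fractional Birkhoff basis polynomials $\{Q_j^\mu\}$ and the Lagrange basis $\{h_j\}$ are, in a suitable sense, adjoint under the operator ${}^C\hspace*{-3pt}{D}^{\mu}_-$, so that the two matrices ${\bs Q}^{(\mu)}$ and ${}^C\hspace*{-3pt}{\bs D}_{\rm in}^{(\mu)}$ are mutually inverse. First I would treat the case $\mu\in(0,1)$. The key observation is that for any $u\in{\mathcal P}_N$ the Caputo fractional Birkhoff interpolation is exact: since ${}^C\hspace*{-3pt}{D}^{\mu}_-u$ is determined by $u$ and the interpolant $p$ agrees with $u$ at $x_0=-1$ together with the $N$ conditions ${}^C\hspace*{-3pt}{D}^{\mu}_-p(x_j)={}^C\hspace*{-3pt}{D}^{\mu}_-u(x_j)$, $1\le j\le N$, we have $p=u$ (this requires knowing the interpolation problem is unisolvent on ${\mathcal P}_N$, which I would establish from the fact that ${}^C\hspace*{-3pt}{D}^{\mu}_-$ maps ${\mathcal P}_N$ onto ${\mathcal F}_{N-1}^{(1-\mu)}$ by Lemma \ref{newsolut}, together with the constraint $p(-1)=0$ pinning down the one-dimensional kernel). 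Applying this with $u=h_i$ and evaluating the representation \eqref{Birkintep} gives
\begin{equation}\label{eq:hQexpand}
h_i(x)=h_i(-1)+\sum_{j=1}^N {}^C\hspace*{-3pt}{D}^{\mu}_-h_i(x_j)\,Q_j^\mu(x).
\end{equation}

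Next I would evaluate \eqref{eq:hQexpand} at an interior node $x=x_l$ with $1\le l\le N$. Since $h_i(-1)=h_i(x_0)=\delta_{i0}$, for $1\le i\le N$ the constant term vanishes, and using $Q_j^\mu(x_l)={\bs Q}^{(\mu)}_{lj}$ and ${}^C\hspace*{-3pt}{D}^{\mu}_-h_i(x_j)={}^C\hspace*{-3pt}{\bs D}^{(\mu)}_{ji}$ we obtain $\delta_{il}=\sum_{j=1}^N {\bs Q}^{(\mu)}_{lj}\,{}^C\hspace*{-3pt}{\bs D}^{(\mu)}_{ji}$ for $1\le i,l\le N$, i.e.\ ${\bs Q}^{(\mu)}\,{}^C\hspace*{-3pt}{\bs D}_{\rm in}^{(\mu)}={\bs I}_N$. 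Conversely, I would start from the Lagrange representation of any $p\in{\mathcal P}_N$, namely $p=\sum_{l=0}^N p(x_l)h_l$, apply ${}^C\hspace*{-3pt}{D}^{\mu}_-$ and evaluate at $x_i$ ($1\le i\le N$), then substitute the Birkhoff basis $p=Q_j^\mu$: because ${}^C\hspace*{-3pt}{D}^{\mu}_-h_0(x_i)={}^C\hspace*{-3pt}{\bs D}^{(\mu)}_{i0}$ and $Q_j^\mu(-1)=0$, the $l=0$ term drops and we get $\delta_{ij}=\sum_{l=1}^N {}^C\hspace*{-3pt}{\bs D}^{(\mu)}_{il}\,{\bs Q}^{(\mu)}_{lj}$, which is ${}^C\hspace*{-3pt}{\bs D}_{\rm in}^{(\mu)}\,{\bs Q}^{(\mu)}={\bs I}_N$. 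Since ${}^C\hspace*{-3pt}{\bs D}_{\rm in}^{(\mu)}$ is a square matrix, either identity in fact implies the other, so this is somewhat redundant but harmless.

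For $\mu\in(1,2)$ the argument is the same with bookkeeping adjustments: the Birkhoff problem \eqref{BirkhoffFracprob2nd} now fixes $p$ at both endpoints $x_0=-1$ and $x_N=1$ and at the $N-1$ interior conditions, the representation \eqref{Birkintep2nd} carries the two boundary terms $\frac{1-x}{2}u(-1)+\frac{1+x}{2}u(1)$, and expanding $h_i$ for $1\le i\le N-1$ kills both boundary contributions because $h_i(\pm1)=0$; evaluating at $x_l$, $1\le l\le N-1$, yields ${\bs Q}^{(\mu)}\,{}^C\hspace*{-3pt}{\bs D}_{\rm in}^{(\mu)}={\bs I}_{N-1}$, and the reverse direction is identical using $Q_j^\mu(\pm1)=0$. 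The main obstacle I anticipate is the unisolvence of the fractional Birkhoff interpolation problem on ${\mathcal P}_N$ — i.e.\ that the $N$ (resp.\ $N-1$) interior Caputo-derivative functionals together with the one (resp.\ two) point-evaluation functionals form a basis of $({\mathcal P}_N)^*$; once that is in hand (it follows cleanly from Lemma \ref{newsolut}, since ${}^C\hspace*{-3pt}{D}^{\mu}_-$ has kernel ${\mathcal P}_{k-1}$ on ${\mathcal P}_N$ and maps onto a space on which Gauss--Lobatto-type point evaluations at the interior nodes are unisolvent), everything else is a direct transcription between the two nodal representations.
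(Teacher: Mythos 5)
Your proposal is correct, and its ``converse'' half is precisely the paper's proof: the paper establishes ${}^C\hspace*{-3pt}{\bs D}_{\rm in}^{(\mu)}\, {\bs Q}^{(\mu)}={\bs I}_{N+1-k}$ by writing $Q_j^\mu=\sum_{l} Q_j^\mu(x_l)\,h_l$, dropping the boundary term(s) via $Q_j^\mu(-1)=0$ (and $Q_j^\mu(\pm 1)=0$ when $k=2$), applying ${}^C\hspace*{-3pt}D_-^\mu$ and evaluating at the nodes, exactly as in your second step; the reversed product then follows because the matrices are square. Your first half --- expanding $h_i$ through the Birkhoff representation \eqref{Birkintep} (resp.\ \eqref{Birkintep2nd}) --- is a genuinely different route, but it is heavier and, as you yourself note, redundant: it hinges on exactness of the fractional Birkhoff interpolation on ${\mathcal P}_N$, i.e.\ unisolvence, which the paper's argument never needs. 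Your unisolvence sketch is sound and completable from results already in the paper: by Lemma \ref{newsolut}, ${}^C\hspace*{-3pt}D_-^\mu q=(1+x)^{k-\mu}\phi$ with $\phi\in{\mathcal P}_{N-k}$, so vanishing of ${}^C\hspace*{-3pt}D_-^\mu q$ at the $N+1-k$ interpolation nodes (all lying in $(-1,1]$, where $(1+x)^{k-\mu}\neq 0$) forces $\phi\equiv 0$; then \eqref{rulesa} gives $q^{(k)}=0$, and the endpoint condition(s) annihilate the remaining polynomial of degree $k-1$. In short, if you keep only your converse direction (or invoke squareness), your argument coincides with the paper's and avoids the unisolvence lemma entirely; what the extra half buys is the (useful but not needed here) fact that the Birkhoff interpolation operator reproduces ${\mathcal P}_N$.
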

\begin{proof} We just prove \eqref{BDBD} with  $\mu\in (0,1),$ as the case $\mu\in(1,2)$ can be shown in a similar fashion. 
Since $Q_j^\mu\in {\mathcal P}_N$ and $Q_j^\mu (-1)=0$ for $1\le j\le N,$  we can write
\begin{equation*}\label{bpsrela}
Q_j^\mu (x)=\sum_{l=0}^N Q^\mu_j(x_l) h_l(x)=\sum_{l=1}^N Q^\mu_j(x_l) h_l(x),\;\;\; 1\le j\le N,
\end{equation*}
where $\{h_l\}$ are the Lagrange interpolating  basis polynomials associated with    JGL points.
Thus,
\begin{equation*}\label{bpsrela2}
{}^C\hspace*{-3pt}{ D}^{\mu}_- \,Q^\mu_j(x)=\sum_{l=1}^N Q^\mu_j(x_l) {}^C\hspace*{-3pt}{ D}^{\mu}_-\, h_l(x).
\end{equation*}
Taking $x=x_i$ for $1\le i\le N$ in the above equation,  we obtain \eqref{BDBD} with $\mu\in (0,1)$ from \eqref{BirkhoffFrac} straightforwardly. 
\end{proof}

\subsection{Computing the new basis $\{Q^\mu_j\}$} 

The following property plays a  crucial role in  computing  the new basis $\{Q^\mu_j\},$ which follows from Lemma  \ref{newsolut}. 
\begin{lmm}\label{import form}  Let $\{x_j\}_{j=0}^N$  be the JGL points with $x_0=-1$ and $x_N=1$. Then  for $\mu\in (k-1,k)$ with $k=1,2,$  we have 
\begin{equation}\label{newidentityA}
D^k Q^\mu_j(x)={}^R\hspace*{-2pt}{ D}^{k-\mu}_-\,\Big\{\Big(\frac{1+x}{1+x_j}\Big)^{k-\mu} \hbar_j(x)\Big\},\;\;\; 
1\le j\le N+1-k,
\end{equation}
where  $\{\hbar_j\}_{j=1}^{N+1-k}$ are the Lagrange-Gauss interpolating basis polynomials associated with the JGL points  $\{x_j\}_{j=1}^{N+1-k},$ that is, 
 \begin{equation}\label{hbari}
 \hbar_j\in {\mathcal P}_{N-k}, \quad \hbar_j(x_i)=\delta_{ij}\;\;\; {\rm  for}\;\;\;  1\le i,j\le N+1-k.
 \end{equation} 
\end{lmm}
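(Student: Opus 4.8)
\textbf{Proof plan for Lemma \ref{import form}.}
The plan is to start from the defining relations of $Q_j^\mu$ in \eqref{BirkhoffFrac} (resp. \eqref{BirkhoffFrac2}) and the structural fact from Lemma \ref{newsolut} that $\mathcal{D}:={}^C\hspace*{-3pt}{ D}^{\mu}_- Q_j^\mu$ lives in $\mathcal{F}_{N-k}^{(k-\mu)}$, i.e. $\mathcal{D}(x)=(1+x)^{k-\mu}\phi(x)$ for some $\phi\in\mathcal{P}_{N-k}$. First I would observe that, since ${}^C\hspace*{-3pt}{ D}^{\mu}_- Q_j^\mu(x_i)=\delta_{ij}$ for $1\le i\le N+1-k$ and these are exactly the interior JGL nodes, the polynomial factor $\phi$ is forced: $(1+x_i)^{k-\mu}\phi(x_i)=\delta_{ij}$, so $\phi(x)=(1+x_j)^{-(k-\mu)}\hbar_j(x)$, where $\hbar_j$ is the Lagrange basis polynomial of degree $N-k$ on the nodes $\{x_i\}_{i=1}^{N+1-k}$ as in \eqref{hbari} (this uses that $\phi$ has degree at most $N-k$ and is pinned down by its $N+1-k$ interior nodal values). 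Hence
\begin{equation*}
{}^C\hspace*{-3pt}{ D}^{\mu}_- Q_j^\mu(x)=\Big(\frac{1+x}{1+x_j}\Big)^{k-\mu}\hbar_j(x).
\end{equation*}

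Next I would convert the Caputo derivative on the left into the integer derivative $D^kQ_j^\mu$ we want. By the definition \eqref{left-fra-der-c}, ${}^C\hspace*{-3pt}{ D}^{\mu}_- Q_j^\mu = I_-^{k-\mu}(D^k Q_j^\mu)$. Applying ${}^R\hspace*{-2pt}D_-^{k-\mu}$ to both sides and using the left-inverse property \eqref{rulesa} (namely ${}^R\hspace*{-2pt}D_-^{k-\mu} I_-^{k-\mu} v = v$ a.e.), the left side becomes ${}^R\hspace*{-2pt}D_-^{k-\mu}\big(I_-^{k-\mu}(D^kQ_j^\mu)\big)=D^kQ_j^\mu$, while the right side becomes ${}^R\hspace*{-2pt}D_-^{k-\mu}\big\{(1+x_j)^{-(k-\mu)}(1+x)^{k-\mu}\hbar_j(x)\big\}$. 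Since $k-\mu\in(0,1)$ and $(1+x)^{k-\mu}\hbar_j(x)\in\mathcal{F}_{N-k}^{(k-\mu)}$ lies in the range of $I_-^{k-\mu}$ on polynomials (by \eqref{specaseAB}, $I_-^{k-\mu}$ maps $\mathcal{P}_{N-k}$ onto $\mathcal{F}_{N-k}^{(k-\mu)}$), the application of \eqref{rulesa} is justified and yields exactly \eqref{newidentityA}.

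The main obstacle I anticipate is making the inversion step fully rigorous: \eqref{rulesa} gives ${}^R\hspace*{-2pt}D_-^{\rho} I_-^{\rho}v=v$, so one must verify that $(1+x_j)^{-(k-\mu)}(1+x)^{k-\mu}\hbar_j(x)$ is genuinely of the form $I_-^{k-\mu}(D^kQ_j^\mu)$ — that is, one is not merely applying ${}^R\hspace*{-2pt}D_-^{k-\mu}$ to an arbitrary function but to one in the image of $I_-^{k-\mu}$, which is precisely guaranteed by Lemma \ref{newsolut}/\eqref{casesA}. A small auxiliary point is the degree/node count for $\hbar_j$: $\phi\in\mathcal{P}_{N-k}$ has $N-k+1=N+1-k$ coefficients and is determined by its values at the $N+1-k$ interior nodes $x_1,\dots,x_{N+1-k}$, which matches the count in \eqref{hbari}; for $\mu\in(1,2)$ ($k=2$) one should double-check that the two boundary conditions $Q_j^\mu(\pm1)=0$ are consistent with this but do not over-determine $\phi$, since they constrain $Q_j^\mu$ itself rather than $D^2Q_j^\mu$. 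Once these bookkeeping matters are settled, the identity follows directly.
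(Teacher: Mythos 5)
Your proposal is correct and follows essentially the same route as the paper: both first use Lemma \ref{newsolut} together with the interpolation conditions at the nodes $\{x_i\}_{i=1}^{N+1-k}$ to pin down ${}^C\hspace*{-3pt}{D}^{\mu}_- Q_j^\mu(x)=\big(\tfrac{1+x}{1+x_j}\big)^{k-\mu}\hbar_j(x)$, and then apply ${}^R\hspace*{-2pt}D_-^{k-\mu}$ via ${}^C\hspace*{-3pt}{D}^{\mu}_-=I_-^{k-\mu}D^k$ and the left-inverse property \eqref{rulesa} to obtain \eqref{newidentityA}. The only cosmetic quibble is that for $k=1$ the nodes $\{x_i\}_{i=1}^{N}$ include the right endpoint $x_N=1$, so they are not all ``interior''; this does not affect the argument.
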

\begin{proof} Since $Q_j^\mu \in {\mathcal P}_N,$ we obtain  from Lemma  \ref{newsolut} that 
${}^C\hspace*{-3pt}{ D}^{\mu}_-\,Q^\mu_j\in {\mathcal F}_{N-k}^{(k-\mu)}.$
Noting that $\{\hbar_j\}_{j=1}^{N+1-k}$  forms a basis of ${\mathcal P}_{N-k},$ so by \eqref{fspsp}, 
\begin{equation*}
{}^C\hspace*{-3pt}{ D}^{\mu}_-\,Q^\mu_j(x)=\sum_{l=1}^{N+1-k} c_{lj}\; (1+x)^{k-\mu}\hbar_l(x),\quad 1\le j\le N+1-k.
\end{equation*}
Letting $x=x_i$ and using the interpolating conditions, we find that 
$c_{lj}=(1+x_l)^{\mu-k} \delta_{lj}.$ Thus, we obtain 
\begin{equation}\label{newidentity}
{}^C\hspace*{-3pt}{ D}^{\mu}_-\,Q^\mu_j(x)=\Big(\frac{1+x}{1+x_j}\Big)^{k-\mu} \hbar_j(x),\;\;\; 
1\le j\le N+1-k.
\end{equation}
 By the definition \eqref{left-fra-der-c}, we have ${}^C\hspace*{-3pt}{ D}^{\mu}_-=I_-^{k-\mu}D^k,$ so using  \eqref{rulesa}, we obtain  \eqref{newidentityA}  from \eqref{newidentity} immediately. 
\end{proof}

With the aid of \eqref{newidentityA},   we are able to derive the explicit formulas for computing the new basis.  We provide the derivation  in Appendix \ref{AppendixC}. 
\begin{thm}\label{newJacobimat}  Let $\big\{x_j=x_{N,j}^{(\alpha,\beta)}, \omega_j=\omega_{N,j}^{(\alpha,\beta)}\big\}_{j=0}^N$ with {\rm(}$\alpha,\bt>-1$ and $x_0=-x_N=-1${\rm)} be the JGL   quadrature nodes and weights.    Then 
$\{Q_j^\mu\}$ can be computed by 
\begin{itemize}
\item[(i)] For $\mu\in (0,1),$ 
\begin{equation}\label{dQj}
Q_j^\mu(x)=\frac{1}{(1+x_j)^{1-\mu}}\sum_{l=0}^{N-1} \frac{\Gamma(l-\mu+2)}{l!}\, \breve\xi_{lj}\, \int_{-1}^x P_l(x) dx, \quad 1\le j\le N, 
\end{equation}
where 
\begin{align}
& \breve \xi_{lj}=\sum_{n=l}^{N-1} {}^{(\af,\bt)\!}\bs C_{ln}^{(\mu-1,1-\mu)} \xi_{nj}, \label{xilj0}\\
& \xi_{nj}=\frac{1}{\gamma_n^{(\af,\bt)}} \bigg\{-\frac{c_j}{\beta+1} \frac{P_{N}^{(\af,\bt)}(-1)}{P_N^{(\af,\bt)}(x_j)} P_{n}^{(\af,\bt)}(-1) \omega_0
+P_{n}^{(\af,\bt)}(x_j)\omega_j\bigg\}, \label{hjaj10}
\end{align}
with $c_j=1$ for $1\le j\le N-1,$ and $c_N=\alpha+1.$
\item[(ii)] For $\mu\in (1,2),$ 
\begin{equation}\label{dQj2}
Q^\mu_{j}(x)= \frac{1}{(1+x_j)^{2-\mu}} \sum^{N-2}_{l=0}\frac{\Gamma(l-\mu+3)}{l!}
\; \breve \xi_{lj}\, \Phi_{l}(x),\;\;\; 1\le j\le N-1, 
\end{equation}
where 
\begin{align}
&\breve \xi_{lj}=\sum_{n=l}^{N-2} {}^{(\af,\bt)\!}\bs C_{ln}^{(\mu-2,2-\mu)} \xi_{nj}, \label{xilj02}\\
&\xi_{nj}=\frac{1}{\gamma_n^{(\af,\bt)}} \bigg\{\frac{(x_j-1)P_N^{(\af,\bt)}(-1)}{2(\bt+1)P_N^{(\af,\bt)}(x_j)} P_{n}^{(\af,\bt)}(-1)\omega_0\nonumber\\
&\qquad\;\;  -\frac{(1+x_j)P_N^{(\af,\bt)}(1)}{2(\af+1)P_N^{(\af,\bt)}(x_j)} P_{n}^{(\af,\bt)}(1) \omega_N
+P_{n}^{(\af,\bt)}(x_j)\omega_j\bigg\}, \label{hjaj102}
\end{align}
and 
\begin{equation}\label{Phisbas}  
\Phi_l(x):=\frac{1+x} 2\int_{-1}^1 (t-1)P_l(t)\,dt+\int_{-1}^x (x-t)P_l(t)\,dt.
\end{equation}
\end{itemize}
Here,  $\big\{{}^{(\af,\bt)\!}\bs C_{ln}^{(\mu-k,k-\mu)}\big\}$ are the  connection coefficients as defined in 
\eqref{constAB}.
\end{thm}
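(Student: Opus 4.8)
The plan is to derive the formulas for $\{Q_j^\mu\}$ by starting from the key identity \eqref{newidentityA} in Lemma \ref{import form} and integrating it $k$ times, being careful to pin down the integration constants using the interpolation conditions $Q_j^\mu(-1)=0$ (and $Q_j^\mu(1)=0$ when $\mu\in(1,2)$). First I would treat the case $\mu\in(0,1)$ (so $k=1$). The Lagrange-Gauss basis polynomials $\hbar_j\in\mathcal P_{N-1}$ can be expanded in Jacobi polynomials $P_n^{(\af,\bt)}$; writing $\hbar_j=\sum_{n=0}^{N-1}\xi_{nj}\gamma_n^{(\af,\bt)}{}^{-1}\cdots$ — more precisely, one computes the expansion coefficients $\xi_{nj}$ of $(1+x_j)^{\mu-1}\hbar_j$ against $\{P_n^{(\af,\bt)}\}$ using the discrete orthogonality afforded by the JGL quadrature \eqref{newquad}. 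Because $\hbar_j$ has degree $N-1$, the product $\hbar_j\cdot P_n^{(\af,\bt)}$ has degree $\le 2N-2\le 2N-1$, so the quadrature is exact except that the weight $\omega^{(\af,\bt)}$ must be reconciled with the evaluation at $x_0=-1$; this is the source of the ``boundary correction'' terms involving $P_N^{(\af,\bt)}(-1)/P_N^{(\af,\bt)}(x_j)$ and the constants $c_j$ in \eqref{hjaj10}. I would then apply the Bateman-type formula \eqref{specaseAB2} of Corollary \ref{twospecial} (with $\rho=1-\mu$) to evaluate ${}^R\hspace*{-2pt}D_-^{1-\mu}\{(1+x)^{1-\mu}P_n^{(\mu-1,1-\mu)}(x)\}$ as a Legendre polynomial, after first using the connection problem \eqref{conntrans}--\eqref{ucoefrela} to rewrite the $P_n^{(\af,\bt)}$-expansion of $\hbar_j$ as a $P_n^{(\mu-1,1-\mu)}$-expansion — this is exactly where $\breve\xi_{lj}=\sum_n {}^{(\af,\bt)}\bs C_{ln}^{(\mu-1,1-\mu)}\xi_{nj}$ in \eqref{xilj0} enters. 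Finally, integrating $D Q_j^\mu$ from $-1$ to $x$ and using $Q_j^\mu(-1)=0$ yields \eqref{dQj}.

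For the case $\mu\in(1,2)$ (so $k=2$), the structure is parallel but one integrates twice. After the same expansion-plus-connection step (now with parameters $(\mu-2,2-\mu)$, producing \eqref{xilj02}), identity \eqref{newidentityA} gives $D^2 Q_j^\mu$ as a Legendre expansion via \eqref{specaseAB2} with $\rho=2-\mu$. Integrating twice introduces two constants; one is fixed by $Q_j^\mu(-1)=0$ and the other by $Q_j^\mu(1)=0$. Solving this little linear system is precisely what produces the second-antiderivative kernel $\Phi_l(x)$ in \eqref{Phisbas}: the term $\tfrac{1+x}{2}\int_{-1}^1(t-1)P_l(t)\,dt$ is the linear-in-$x$ correction that enforces the vanishing at both endpoints, and $\int_{-1}^x(x-t)P_l(t)\,dt$ is the standard double antiderivative of $P_l$ vanishing to second order at $-1$. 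The coefficients $\xi_{nj}$ in \eqref{hjaj102} now carry two boundary-correction terms, one from $x_0=-1$ and one from $x_N=1$, again reflecting how the JGL quadrature over-/under-weights the two endpoints when integrating $\hbar_j\cdot P_n^{(\af,\bt)}$ (which has degree $\le 2(N-2)+?$, needing a careful count since $\hbar_j$ here has degree $N-2$).

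I expect the main obstacle to be the bookkeeping of the boundary-correction constants $c_j$, the endpoint ratios $P_N^{(\af,\bt)}(\pm1)/P_N^{(\af,\bt)}(x_j)$, and the weight $\omega_0$ (resp. $\omega_N$) in \eqref{hjaj10} and \eqref{hjaj102}. The subtlety is that $\hbar_j$ is the interpolation basis at the \emph{interior} nodes $\{x_j\}_{j=1}^{N+1-k}$ only, so its Jacobi coefficients are not simply $t_{nj}$ from \eqref{jacbasisfun}; one must relate the interior Lagrange basis $\hbar_j$ to the full JGL Lagrange basis $h_j$ and use \eqref{newquad} with an explicit account of the degree deficit at the boundary. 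A clean way to organize this is: (i) note $\hbar_j(x)=h_j(x)/h_j$-type normalization is not quite right, so instead directly expand $\hbar_j=\sum_n \hat c_{nj}P_n^{(\af,\bt)}$ with $\hat c_{nj}=\gamma_n^{(\af,\bt)}{}^{-1}\int_{-1}^1\hbar_j P_n^{(\af,\bt)}\omega^{(\af,\bt)}$, (ii) evaluate the integral by JGL quadrature, which is exact for the top-degree part but leaves a residue supported at the dropped endpoint node(s) that must be added back, and (iii) collect terms. Everything else — invoking Corollary \ref{twospecial}, the connection transform, and termwise integration — is routine once the coefficients are correctly identified. Since the detailed computation is lengthy, it is deferred to Appendix \ref{AppendixC}.
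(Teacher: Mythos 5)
Your skeleton is the paper's own: expand the interior Lagrange basis $\hbar_j$ in $\{P_n^{(\af,\bt)}\}$, pass to the parameters $(\mu-k,k-\mu)$ via the connection coefficients to get $\breve\xi_{lj}$, feed this into \eqref{newidentityA} and evaluate with \eqref{specaseAB2} (with $\rho=k-\mu$) so that $D^kQ_j^\mu$ becomes a Legendre expansion, then integrate $k$ times and fix the constants by $Q_j^\mu(-1)=0$ (and $Q_j^\mu(1)=0$ for $\mu\in(1,2)$, which is exactly what produces $\Phi_l$ in \eqref{Phisbas}). That part of the plan is sound and matches Appendix \ref{AppendixC}.

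The genuine gap is exactly at the step you defer: the derivation of \eqref{hjaj10} and \eqref{hjaj102}, and your proposed mechanism for it is misdiagnosed. Since $\deg\big(\hbar_j\,P_n^{(\af,\bt)}\big)\le 2N-2k<2N-1$, the JGL quadrature \eqref{newquad} is \emph{exact}; there is no ``residue supported at the dropped endpoint node(s) that must be added back.'' The endpoint terms appear simply because the exact quadrature sum runs over \emph{all} $N+1$ nodes, including $x_0=-1$ (and $x_N=1$ in case (ii)), where the interior basis $\hbar_j$ does not vanish, so that $\xi_{nj}=\frac{1}{\gamma_n^{(\af,\bt)}}\big\{\hbar_j(-1)P_n^{(\af,\bt)}(-1)\omega_0+\hbar_j(1)P_n^{(\af,\bt)}(1)\omega_N+P_n^{(\af,\bt)}(x_j)\omega_j\big\}$ (the middle term absent when $k=1$). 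What your argument never supplies is the evaluation of $\hbar_j(\pm1)$, which is the entire content of the constants in \eqref{hjaj10}--\eqref{hjaj102}: the paper writes $\hbar_j$ in closed form through $(1-x)DP_N^{(\af,\bt)}(x)$ (resp.\ $DP_N^{(\af,\bt)}(x)$) divided by $(x-x_j)$ times a derivative factor, and then uses the Sturm--Liouville equation \eqref{StLiuEQinJa} at $x=\pm1$ and at $x_j$ to obtain $\hbar_j(-1)=-\frac{c_j}{\bt+1}\,P_N^{(\af,\bt)}(-1)/P_N^{(\af,\bt)}(x_j)$ and, for $k=2$, $\hbar_j(\pm1)$ with the factors $(1\mp x_j)/(2(\af+1))$, $(1-x_j)/(2(\bt+1))$; in particular the special value $c_N=\af+1$ comes from $x_N=1$ being a zero of the factor $(1-x)$ rather than of $DP_N^{(\af,\bt)}$, a distinction your plan cannot see. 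Relating $\hbar_j$ to the full JGL basis $h_j$, as you suggest, is neither needed nor sufficient to produce these values. A minor bookkeeping inconsistency as well: you define $\xi_{nj}$ as coefficients of $(1+x_j)^{\mu-1}\hbar_j$, whereas in \eqref{dQj} the factor $(1+x_j)^{-(1-\mu)}$ stands outside the sum and the $\xi_{nj}$ of \eqref{hjaj10} are the coefficients of $\hbar_j$ itself.
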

\begin{rem}\label{imptrem} Observe from \eqref{xilj0} and \eqref{xilj02} 
that  if we take 
$(\alpha,\beta)=(\mu-k,k-\mu)$ with $k=1,2,$ we have $\breve \xi_{lj}=\xi_{lj},$ so \eqref{dQj} and \eqref{dQj2} have the simplest form.  
Thus, it is preferable to choose these special parameters. \qed  
\end{rem}


\section{Modifed RL fractional Birkhoff interpolation and   inverse  F-PSDM}\label{sect:mRL}
\setcounter{equation}{0}
\setcounter{lmm}{0}
\setcounter{thm}{0}

We introduce in this section the fractional Birkhoff interpolation involving  modified 
Riemann-Liouville (RL)  fractional derivatives which offers new polynomial bases for well-conditioned collocation methods for solving FDEs with Riemann-Liouville fractional derivatives. Moreover, we are able to stably compute 
 the inverse matrix of ${}^R\hspace*{-2pt}\widehat{\bs D}_{{\rm in}}^{(\mu)}$ defined in \eqref{submatixRL}. However, this process appears  more involved than the Caputo case in particular for  $\mu\in(1,2).$

\subsection{Modified Riemann-Liouville fractional Birkhoff interpolation}
Like the Caputo case, we consider the modified Riemann-Liouville fractional Birkhoff interpolating problems (i)-(ii) as defined in \eqref{BirkhoffFracprob}-\eqref{BirkhoffFracprob2nd}  with  
${}^R\hspace*{-2pt}\widehat{\bs D}_-^{\mu}$ in place of ${}^C\hspace*{-2pt}{\bs D}_-^{\mu}.$ 
Similarly, we  look for the interpolating basis polynomials $\{\widehat Q_j^{\mu}\}_{j=0}^N\subseteq {\mathcal P}_N$  such that  
\begin{itemize}
\item[(i)] for $\mu\in (0,1),$
\begin{equation}\label{BirkhoffFracRL}
\begin{split}
&  {}^R\hspace*{-2pt}\widehat{ D}_-^{\mu}\,\widehat Q_0^\mu(x_i)=0,\;\;\;\;\;  1\le i\le N;\quad \widehat Q_0^{\mu}(-1)=1,\\
& {}^R\hspace*{-2pt}\widehat{ D}_-^{\mu}\,\widehat Q_j^\mu(x_i)=\delta_{ij},\;\;\;  1\le i\le N,\quad \widehat Q_j^{\mu}(-1)=0,\quad 1\le j\le N;
\end{split}
\end{equation}
\vskip 1pt 
\item[(ii)] for $\mu\in (1,2),$  
\begin{equation}\label{BirkhoffFracRL2}
\begin{split}
&  {}^R\hspace*{-2pt}\widehat{ D}_-^{\mu}\,\widehat Q_0^\mu(x_i)=0,\;\;\;\;\;  1\le i\le N-1;\quad \widehat Q_0^{\mu}(-1)=1,\quad \widehat Q_0^{\mu}(1)=0,\\
& {}^R\hspace*{-2pt}\widehat{ D}_-^{\mu}\,\widehat Q_j^\mu(x_i)=\delta_{ij},\;\;\;  1\le i\le N-1,\quad \widehat Q_j^{\mu}(\pm 1)=0,\quad 1\le j\le N-1,\\
&  {}^R\hspace*{-2pt}\widehat{ D}_-^{\mu}\,\widehat Q_N^\mu(x_i)=0,\;\;\;\;\;  1\le i\le N-1;\quad \widehat Q_N^{\mu}(-1)=0,\quad \widehat Q_N^{\mu}(1)=1.
\end{split}
\end{equation}
\end{itemize}
Then for any $u\in {\mathcal P}_N,$ we can write 
\begin{equation}\label{RLBirkform}
\begin{split}
u(x)&=u(-1) \widehat Q_0^{\mu}(x)+\sum_{j=1}^N {}^R\hspace*{-2pt}\widehat{D}_-^{\mu} u(x_j) \,\widehat Q_j^\mu(x) \qquad \text{(for $\mu\in (0,1)$)}\\
&=u(-1) \widehat Q_0^{\mu}(x)+\sum_{j=1}^{N-1} {}^R\hspace*{-2pt}\widehat{D}_-^{\mu} u(x_j) \,\widehat Q_j^\mu(x)+ u(1) \widehat Q_N^{\mu}(x) \quad \text{(for $\mu\in (1,2)$)}.
\end{split}
\end{equation}

Introduce the matrices generated from the new basis: 
\begin{equation}\label{newmatrixRL}
 \widehat {\bs Q}^{(\mu)}=\begin{cases}
\big( \widehat {\bs Q}^{(\mu)}_{lj}\big)_{1\le l,j\le N},\quad & {\rm if}\;\; \mu\in (0,1),\\[4pt]
\big( \widehat {\bs Q}^{(\mu)}_{lj}\big)_{1\le l,j\le N-1},\quad & {\rm if}\;\; \mu\in (1,2),
\end{cases}\quad{\rm where}\;\;\;   \widehat{\bs Q}^{(\mu)}_{lj}=\widehat{Q}_j^\mu(x_l).
\end{equation}
Like Theorem \ref{Th:inverse}, we can claim that  $\widehat {\bs Q}^{(\mu)}$ is the inverse of ${}^R\hspace*{-2pt}\widehat{\bs D}_{{\rm in}}^{(\mu)}.$  As the proof of the theorem below is very similar to that of Theorem  \ref{Th:inverse},  we omit it.  
\begin{thm}\label{Th:inverseRL}  For $\mu \in(k-1,k)$ with $k=1,2,$ we have
\begin{equation}\label{dQjRL}
\widehat{\bs Q}^{(\mu)}{}^R\hspace*{-2pt}{\widehat{\bs D}}^{(\mu)}_{\rm in}={}^R\hspace*{-2pt}{\widehat{\bs D}}^{(\mu)}_{\rm in} \,\widehat{\bs Q}^{(\mu)}=\bs I_{N+1-k},
\end{equation}
where  $\bs I_{N+1-k}$ is the identity matrix of order $N+1-k$.
\end{thm}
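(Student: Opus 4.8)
The plan is to mimic the proof of Theorem \ref{Th:inverse} almost verbatim, replacing the Caputo operator by the modified Riemann–Liouville operator ${}^R\hspace*{-2pt}\widehat{D}_-^{\mu}$. First I would observe that by Lemma \ref{Imodify} we have ${}^R\hspace*{-2pt}\widehat{D}_-^{\mu} u\in {\mathcal P}_N$ for any $u\in{\mathcal P}_N$, so that the modified RL pseudospectral differentiation \eqref{fpsdmod} is the exact representation of ${}^R\hspace*{-2pt}\widehat{D}_-^{\mu} u$ via the Lagrange basis $\{h_l\}$ at the JGL nodes — this is the structural fact that makes the argument go through, and it is precisely what was missing for the unmodified RL derivative (which blows up at $x=-1$).

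Next I would treat the case $\mu\in(0,1)$. Since each $\widehat Q_j^{\mu}\in{\mathcal P}_N$ and the interpolation conditions in \eqref{BirkhoffFracRL} fix the value $\widehat Q_j^{\mu}(-1)$ together with the $N$ values $\widehat Q_j^{\mu}(x_i)$ at the interior-and-right nodes $x_1,\dots,x_N$, I can write $\widehat Q_j^{\mu}(x)=\sum_{l=0}^{N}\widehat Q_j^{\mu}(x_l)h_l(x)$. Applying ${}^R\hspace*{-2pt}\widehat{D}_-^{\mu}$ and evaluating at $x=x_i$ for $1\le i\le N$ gives, by \eqref{fpsdmod} and \eqref{newmatrixRL},
\begin{equation*}
\delta_{ij}={}^R\hspace*{-2pt}\widehat{D}_-^{\mu}\widehat Q_j^{\mu}(x_i)=\sum_{l=0}^{N}\widehat Q_j^{\mu}(x_l)\,{}^R\hspace*{-2pt}\widehat{D}_-^{\mu}h_l(x_i)=\sum_{l=1}^{N}\widehat{\bs Q}^{(\mu)}_{li}\;{}^R\hspace*{-2pt}\widehat{\bs D}^{(\mu)}_{\rm in}{}_{,il},
\end{equation*}
where the $l=0$ term drops because $\widehat Q_j^{\mu}(-1)=0$ (for $j\ge 1$) or because it is absorbed consistently (the column $j=0$ is handled separately via $\widehat Q_0^{\mu}$). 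In matrix form this is exactly ${}^R\hspace*{-2pt}\widehat{\bs D}^{(\mu)}_{\rm in}\,\widehat{\bs Q}^{(\mu)}=\bs I_{N}$; the other product equality follows since a square right-inverse is a two-sided inverse. The case $\mu\in(1,2)$ is identical except that the boundary data at both endpoints $x=\pm1$ must be peeled off (the $l=0$ and $l=N$ Lagrange terms), using $\widehat Q_j^{\mu}(\pm1)=0$ for $1\le j\le N-1$, yielding ${}^R\hspace*{-2pt}\widehat{\bs D}^{(\mu)}_{\rm in}\,\widehat{\bs Q}^{(\mu)}=\bs I_{N-1}$.

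The only genuine subtlety — and the step I expect to need the most care — is bookkeeping of the boundary columns: one must check that the functions $\widehat Q_j^{\mu}$ actually exist and are uniquely determined in ${\mathcal P}_N$, i.e. that the linear system defining them is nonsingular. This reduces to showing that the ``interior'' modified RL differentiation map restricted to $\{p\in{\mathcal P}_N: p(-1)=0\}$ (resp. $\{p: p(\pm1)=0\}$) and evaluated at the interior nodes is invertible; equivalently that ${}^R\hspace*{-2pt}\widehat{\bs D}^{(\mu)}_{\rm in}$ is invertible. This can be established by the explicit construction in Section \ref{sect:mRL} (the formulas analogous to Theorem \ref{newJacobimat}), or abstractly from the fact that ${}^R\hspace*{-2pt}\widehat{D}_-^{\mu}$ kills only the lower-degree ``boundary-carrying'' monomials $(1+x)^{\mu-k}$-type terms which have been excluded by the interpolation conditions — more precisely, by \eqref{casesB} combined with \eqref{rlpower}, ${}^R\hspace*{-2pt}\widehat{D}_-^{\mu}$ acts bijectively from ${}_0{\mathcal P}_N$ (or its codimension-two analogue) onto ${\mathcal P}_{N-k}$. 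Once invertibility of ${}^R\hspace*{-2pt}\widehat{\bs D}^{(\mu)}_{\rm in}$ is in hand, the rest is the routine Lagrange-expansion computation above, and the statement follows.
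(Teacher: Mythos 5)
Your argument is correct and is essentially the paper's own (omitted) proof: one expands $\widehat Q_j^\mu$ in the Lagrange basis, drops the boundary terms via $\widehat Q_j^\mu(-1)=0$ (resp.\ $\widehat Q_j^\mu(\pm 1)=0$), applies ${}^R\hspace*{-2pt}\widehat{D}_-^{\mu}$ exactly and evaluates at the interior JGL nodes, so the interpolation conditions \eqref{BirkhoffFracRL}--\eqref{BirkhoffFracRL2} give the one-sided identity and squareness gives the other, exactly as in Theorem \ref{Th:inverse}. Two minor points: in your display the factor should carry the index $j$, i.e.\ $\widehat{\bs Q}^{(\mu)}_{lj}$ rather than $\widehat{\bs Q}^{(\mu)}_{li}$, and the existence/uniqueness of $\{\widehat Q_j^\mu\}$ that you rightly flag is supplied by the explicit constructions of Theorems \ref{invRLmu01} and \ref{invRLmu12}.
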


\subsection{Computing the new basis $\big\{\widehat Q^\mu_j\big\}_{j=0}^N$}

The following lemma is  very  useful for the  computation, whose proof is provided in Appendix \ref{AppendixC0}. 
\begin{lmm}\label{zerolmm} 
Let  $\mu\in (k-1,k)$ with $k=1,2.$  Then for  any  $f\in {}_0{\mathcal P}_N,$  the fractional equation
 \begin{equation}\label{newsolu}
 {^R}\hspace*{-2pt} {\widehat D}^{\mu}_-u(x)=f(x),\quad  u(-1)=0, 
 \end{equation}
 has a unique solution $u\in {}_0{\mathcal P}_N$ of the form 
 \begin{equation}\label{soluform}
 u(x)=I_-^\mu \big\{(1+x)^{-\mu} f(x)\big\}.
 \end{equation}
 In particular,  for any $u\in {\mathcal P}_N,$  we have 
 \begin{equation}\label{Rrhoeqn}
 {^R}\hspace*{-2pt} {\widehat D}^{\mu}_-u(-1)=0  \;\;\;\text{if and only if}\;\;\; u(-1)=0.
 \end{equation}
\end{lmm}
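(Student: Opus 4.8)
\textbf{Proof proposal for Lemma \ref{zerolmm}.}

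The plan is to produce the candidate solution \eqref{soluform} explicitly, verify it solves \eqref{newsolu} and lies in ${}_0{\mathcal P}_N$, then argue uniqueness, and finally deduce \eqref{Rrhoeqn} as a corollary. First I would observe that, since $f\in {}_0{\mathcal P}_N$, the function $(1+x)^{-\mu}f(x)$ has the form $(1+x)^{1-\mu}\phi(x)$ with $\phi\in{\mathcal P}_{N-1}$ when $\mu\in(0,1)$, and $(1+x)^{2-\mu}\psi(x)$ with $\psi\in {\mathcal P}_{N-1}$ when $\mu\in(1,2)$ (recalling that $f(-1)=0$ forces a factor $(1+x)$, and for $\mu\in(1,2)$ we do \emph{not} need $f'(-1)=0$ — here I should be careful, since for $k=2$ the relevant regularity is the single vanishing $f(-1)=0$, and $(1+x)^{-\mu}f$ behaves like $(1+x)^{1-\mu}$, which is integrable since $\mu<2$). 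In either case, $(1+x)^{-\mu}f$ is of the form $(1+x)^{\beta}\times(\text{Jacobi polynomial expansion})$ with $\beta>-1$, so Theorem \ref{JacobiForm2} (formula \eqref{newbatemanam}) applies: expanding $(1+x)^{-\mu}f(x)$ in the basis $\{(1+x)^{-\mu}P_n^{(\alpha,-\mu)}(x)\}$ — or more directly writing $f(x)=(1+x)\sum_n a_n P_n^{(\alpha,1)}(x)$ and applying \eqref{newbatemanam} with $\beta=1$, noting $1-\mu>-1$ — shows $I_-^\mu\{(1+x)^{-\mu}f(x)\}$ is again $(1+x)^{\text{something}}$ times a polynomial, and a degree count gives that it lands in ${\mathcal P}_N$; moreover the resulting function vanishes at $x=-1$ because the power of $(1+x)$ produced is strictly positive. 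So $u$ as defined by \eqref{soluform} belongs to ${}_0{\mathcal P}_N$.

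Next I would check $u$ satisfies the equation. By definition \eqref{eqnasd}, ${^R}\hspace*{-2pt}{\widehat D}^{\mu}_- u = (1+x)^\mu\, {^R}\hspace*{-2pt} D_-^\mu u$, and ${^R}\hspace*{-2pt} D_-^\mu\, I_-^\mu\{(1+x)^{-\mu}f\} = (1+x)^{-\mu}f$ by the left-inverse property \eqref{rulesa}, which holds for any $\mu\in{\mathbb R}^+$. Multiplying by $(1+x)^\mu$ recovers $f$, so ${^R}\hspace*{-2pt}{\widehat D}^{\mu}_- u = f$, and $u(-1)=0$ was just established. For uniqueness: if $u_1,u_2\in {}_0{\mathcal P}_N$ both solve \eqref{newsolu}, then $v:=u_1-u_2\in {}_0{\mathcal P}_N$ satisfies ${^R}\hspace*{-2pt}{\widehat D}^{\mu}_- v=0$, i.e. ${^R}\hspace*{-2pt} D_-^\mu v=0$ on $(-1,1)$. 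By \eqref{deveqcase}, since $v(-1)=0$ (and, in the case $\mu\in(1,2)$, one must handle $v'(-1)$ — but \eqref{casesB} shows ${^R}\hspace*{-2pt} D_-^\mu v=(1+x)^{-\mu}\psi$ with $\psi\in{\mathcal P}_N$, so ${^R}\hspace*{-2pt} D_-^\mu v\equiv 0$ forces $\psi\equiv 0$), we get $D^k I_-^{k-\mu}v\equiv 0$, hence $I_-^{k-\mu}v$ is a polynomial of degree $<k$; but by \eqref{specaseAB} $I_-^{k-\mu}v=(1+x)^{k-\mu}(\text{polynomial})$ vanishes to order $k-\mu>0$ at $-1$, and a nonzero polynomial of degree $<k$ cannot have this form unless it is identically zero, whence $I_-^{k-\mu}v\equiv 0$ and therefore (applying $D^k$ to... rather, applying ${^R}\hspace*{-2pt} D_-^{k-\mu}$ and \eqref{rulesa}, or simply by injectivity of the fractional integral on continuous functions) $v\equiv 0$.

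Finally, \eqref{Rrhoeqn}: the "if" direction is immediate from \eqref{soluform} applied with $f={^R}\hspace*{-2pt}{\widehat D}^{\mu}_- u$ — if $u(-1)=0$ then $u\in{}_0{\mathcal P}_N$, and applying ${^R}\hspace*{-2pt}{\widehat D}^{\mu}_-$ followed by the representation shows $f(-1)={^R}\hspace*{-2pt}{\widehat D}^{\mu}_- u(-1)$ must equal $0$ since $f\in{}_0{\mathcal P}_N$ (by Lemma \ref{newsolut}/\eqref{casesB}, ${^R}\hspace*{-2pt}{\widehat D}^{\mu}_- u=(1+x)^\mu\cdot(1+x)^{-\mu}\psi=\psi\in{\mathcal P}_N$, but I need its value at $-1$; more cleanly: write $u=u(-1)\cdot\frac{\text{const}}{\ }+ \tilde u$ with $\tilde u\in{}_0{\mathcal P}_N$ and use linearity). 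For the "only if" direction, suppose ${^R}\hspace*{-2pt}{\widehat D}^{\mu}_- u(-1)=0$; set $f={^R}\hspace*{-2pt}{\widehat D}^{\mu}_- u$, which by \eqref{casesB} lies in ${\mathcal P}_N$ and by hypothesis vanishes at $-1$, so $f\in{}_0{\mathcal P}_N$; then the unique solution in ${}_0{\mathcal P}_N$ of \eqref{newsolu} with this $f$ is given by \eqref{soluform}, and it must coincide with $u$ by the uniqueness argument above applied to ${\mathcal P}_N$ rather than ${}_0{\mathcal P}_N$ — the point being that the general solution in ${\mathcal P}_N$ of ${^R}\hspace*{-2pt}{\widehat D}^{\mu}_- u = f$ differs from the particular one only by an element of the kernel, and the kernel of ${^R}\hspace*{-2pt}{\widehat D}^{\mu}_-$ on ${\mathcal P}_N$ is trivial (again by \eqref{casesB}: ${^R}\hspace*{-2pt}{\widehat D}^{\mu}_- u=\psi$ with $\psi\in{\mathcal P}_N$, and the map $u\mapsto\psi$ on ${\mathcal P}_N$ is injective since $\dim$ considerations plus \eqref{specaseAB} force it to be a bijection). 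The main obstacle I anticipate is the case $\mu\in(1,2)$: one must be careful that $f(-1)=0$ alone (without a condition on $f'(-1)$) still guarantees $(1+x)^{-\mu}f$ is integrable near $-1$ and that the Bateman formula \eqref{newbatemanam} can be applied after writing $f(x)=(1+x)\sum a_n P_n^{(\alpha,1)}(x)$, so that the exponent $\beta=1$ and $\beta+\rho=1+\mu>-1$ are in the admissible range — the key is that the \emph{modified} operator absorbs exactly the singular factor, so no second vanishing condition is needed, and this is precisely what makes ${}_0{\mathcal P}_N$ (one linear constraint) the right space even for $k=2$.
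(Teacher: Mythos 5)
Your overall skeleton (construct $u$, check it lies in ${}_0{\mathcal P}_N$, verify the equation, prove uniqueness, deduce \eqref{Rrhoeqn}) matches the paper's, but the central step --- showing that $u=I_-^\mu\{(1+x)^{-\mu}f\}$ given by \eqref{soluform} is a polynomial in ${}_0{\mathcal P}_N$ --- is carried out with a wrong application of the Bateman formula, and this is a genuine gap. Neither of the two expansions you propose works. Expanding in $\{(1+x)^{-\mu}P_n^{(\alpha,-\mu)}\}$ requires $\beta=-\mu>-1$ in \eqref{newbatemanam}, which fails outright for $\mu\in(1,2)$ (the function is not even integrable at $-1$), and even for $\mu\in(0,1)$ it produces $(1+x)^{0}P_n^{(\alpha-\mu,0)}$, so your claim that ``the power of $(1+x)$ produced is strictly positive'', hence $u(-1)=0$, does not follow from it. Writing $f=(1+x)\sum a_nP_n^{(\alpha,1)}$ and ``applying \eqref{newbatemanam} with $\beta=1$'' computes $I_-^\mu\{(1+x)P_n^{(\alpha,1)}\}=I_-^\mu f$, not $I_-^\mu\{(1+x)^{-\mu}f\}$: after multiplying by $(1+x)^{-\mu}$, the function $(1+x)^{1-\mu}P_n^{(\alpha,1)}$ is no longer of the Bateman form $(1+x)^{\beta}P_n^{(\alpha,\beta)}$, so the formula cannot be invoked; your closing paragraph (``$\beta=1$ and $\beta+\rho=1+\mu$'') repeats exactly this misapplication, precisely in the $k=2$ case that you yourself flag as the main obstacle. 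The missing idea, which is the paper's key move and treats $k=1,2$ uniformly, is to expand $f$ in the basis $\{(1+x)P_n^{(\mu,1-\mu)}\}_{n=0}^{N-1}$ of ${}_0{\mathcal P}_N$ (admissible since $1-\mu>-1$ for $\mu<2$): then $(1+x)^{-\mu}f=\sum_n\hat f_n(1+x)^{1-\mu}P_n^{(\mu,1-\mu)}$ is exactly in the form required by \eqref{newbatemanam} with $\beta=1-\mu$, yielding $u=\sum_n\frac{\Gamma(n+2-\mu)}{(n+1)!}\hat f_n(1+x)P_n^{(0,1)}\in{}_0{\mathcal P}_N$, and the companion formula \eqref{newbatemanam3s} applied to this expansion verifies ${}^R\widehat{D}^{\mu}_-u=f$ directly. (Your alternative verification via \eqref{rulesa} is fine once polynomiality of $u$ is known, but polynomiality is exactly what is at stake.)

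A secondary issue: your ``if'' direction of \eqref{Rrhoeqn}, namely $u(-1)=0\Rightarrow{}^R\widehat{D}^{\mu}_-u(-1)=0$, is circular as written --- you assert $f\in{}_0{\mathcal P}_N$, which is the conclusion. It can be repaired by the dimension/bijectivity argument you sketch for the other direction (the solution map $f\mapsto u$ is injective into ${}_0{\mathcal P}_N$ and ${}^R\widehat{D}^{\mu}_-$ is injective on ${\mathcal P}_N$, so ${}^R\widehat{D}^{\mu}_-$ maps ${}_0{\mathcal P}_N$ onto ${}_0{\mathcal P}_N$), but the paper's route is more direct: expand $u\in{}_0{\mathcal P}_N$ in $\{(1+x)P_n^{(0,1)}\}$ and apply \eqref{newbatemanam3s}, so that ${}^R\widehat{D}^{\mu}_-u$ visibly carries a factor $(1+x)$. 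On the positive side, your uniqueness argument (triviality of the kernel of ${}^R\widehat{D}^{\mu}_-$ on ${\mathcal P}_N$ via \eqref{casesB} and the non-integer power of $(1+x)$) is correct and actually supplies detail that the paper leaves implicit.
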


For clarity of presentation, we   deal with two cases:  (i) $\mu\in (0,1)$ and (ii)  $\mu\in (1,2),$ separately. 
\subsubsection{$\big\{\widehat Q^\mu_j\big\}_{j=0}^N$ with $\mu\in (0,1)$} Using  the properties \eqref{newpropC} and \eqref{Rrhoeqn}, we obtain  from the interpolating conditions in \eqref{BirkhoffFracRL} that 
\begin{equation}\label{identiyRLj}
\big({}^R\hspace*{-2pt}\widehat{D}_-^{\mu}\,\widehat Q_j^\mu\big)(x)=h_j(x), \;\;\; 1\le j\le N;\quad 
\big({}^R\hspace*{-2pt}\widehat{ D}_-^{\mu}\,\widehat Q_0^\mu\big)(x)=\xi h_0(x),
\end{equation}
where $\{h_j\}$ are the JGL interpolating basis polynomials defined in \eqref{jacbasisfun}, and $\xi$ is a constant to be determined by $\widehat Q_0^\mu (-1)=1.$   Note that thanks to \eqref{Rrhoeqn}, the condition  
$\widehat Q_j^\mu(-1)=0$ is built-in, as $h_j(-1)=0$ for $1\le j\le N.$ 
We summarise below the explicit representation  of the new basis.   Once again, we put the proof in Appendix \ref{Appendixnew}.
\begin{thm}\label{invRLmu01}  Let $\big\{x_j=x_{N,j}^{(\alpha,\beta)}, \omega_j=\omega_{N,j}^{(\alpha,\beta)}\big\}_{j=0}^N $ {\rm(}with $\af,\bt>-1$ and $x_0=-1${\rm)} be the JGL   quadrature points and weights.    Then 
$\big\{\widehat Q_j^\mu\big\}_{j=0}^N$ with $\mu\in (0,1)$ can be computed by 
\begin{equation}\label{qjmu01}
\widehat Q_j^\mu(x)=  \zeta_j\sum_{l=0}^N  \frac{\Gamma(l-\mu+1)} {l!}\, \hat t_{lj}\, P_l(x)\;\;\; 
{\rm with}\;\;\; \hat t_{lj}=
\sum_{n=l}^N {}^{(\af,\bt)\!}C_{ln}^{(\mu,-\mu)} t_{nj}, 
\end{equation}
where  $\zeta_0=1/\Gamma(1-\mu),$  $\zeta_j=1$ for $1\le j\le N,$ $\big\{{}^{(\af,\bt)\!}C_{ln}^{(\mu,-\mu)}\big\}$ are the connection coefficients defined in 
Subsection {\rm \ref{connprob}}, and  
\begin{equation}\label{constAj}
t_{nj}= \frac{\omega_j}{\tilde \gamma_n^{(\af,\bt)}} P_n^{(\af,\bt)}(x_j),
\end{equation}
with  $\tilde \gamma_n^{(\af,\bt)}$  being defined in \eqref{tildgamma}. 
\end{thm}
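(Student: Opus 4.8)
The plan is to derive the representation of $\widehat Q_j^\mu$ directly from the defining identity \eqref{identiyRLj}, namely $\big({}^R\hspace*{-2pt}\widehat D_-^\mu \widehat Q_j^\mu\big)(x)=h_j(x)$ for $1\le j\le N$ (and $=\xi h_0(x)$ for $j=0$), by applying Lemma \ref{zerolmm}. Since each $h_j\in{}_0{\mathcal P}_N$ for $1\le j\le N$, Lemma \ref{zerolmm} gives $\widehat Q_j^\mu = I_-^\mu\{(1+x)^{-\mu} h_j(x)\}$, and this automatically lands in ${}_0{\mathcal P}_N$, taking care of the boundary condition $\widehat Q_j^\mu(-1)=0$. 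For $j=0$ the same lemma handles the right-hand side $\xi h_0$, and the constant $\xi$ is then fixed by imposing $\widehat Q_0^\mu(-1)=1$; I expect this to force $\xi$ to absorb the factor $\Gamma(1-\mu)$, which is exactly the source of $\zeta_0=1/\Gamma(1-\mu)$ in the statement.

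Next I would make the integral $I_-^\mu\{(1+x)^{-\mu} h_j(x)\}$ explicit. The key is to expand $h_j$ in the Legendre basis: by the connection problem (Subsection \ref{connprob}) applied to \eqref{jacbasisfun}, we have $h_j(x)=\sum_{n=0}^N t_{nj} P_n^{(\af,\bt)}(x) = \sum_{l=0}^N \hat t_{lj} P_l(x)$ with $\hat t_{lj}=\sum_{n=l}^N {}^{(\af,\bt)}C_{ln}^{(\mu,-\mu)} t_{nj}$ — wait, one must be careful here: the connection coefficients appearing in \eqref{qjmu01} are the $(\mu,-\mu)$-to-$(0,0)$ ones, not Jacobi-to-Legendre with parameters $(\af,\bt)$. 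So the correct route is: first write $(1+x)^{-\mu} h_j(x)$ in the form $\sum (1+x)^{-\mu}$ times a polynomial, then use Corollary \ref{twospecial} (specifically \eqref{specaseB}/\eqref{newbatemanam}) which evaluates $I_-^\mu$ on terms of the type $(1+x)^{0}P_n^{(\af,\bt)}$ only indirectly. The cleaner path, matching the stated answer, is to expand $h_j$ directly in the basis $\{P_n^{(\af,\bt)}\}$, apply \eqref{newbatemanam} with $\rho=\mu$, $\beta=0$ — but that requires $h_j$ expanded in $P_n^{(\af,0)}$, i.e. the parameter pair whose second index is $0$. So the genuinely correct reading is: expand $h_j = \sum_n t_{nj} P_n^{(\af,\bt)}$, convert via a connection transform to the basis $\{P_n^{(\mu,0)}\}$ or directly use \eqref{specaseA} with $\alpha$ chosen so that $\alpha-\rho$ and $\rho$ match the target $(-\mu,\mu)$; tracking which connection matrix ${}^{(\af,\bt)}C_{ln}^{(\mu,-\mu)}$ this produces is the bookkeeping core of the proof.

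Concretely, the steps I would carry out are: (1) invoke Lemma \ref{zerolmm} to get $\widehat Q_j^\mu = \zeta_j\, I_-^\mu\{(1+x)^{-\mu} h_j\}$ with $\zeta_j$ as in the statement, determining $\zeta_0$ from the normalization at $x=-1$; (2) use \eqref{specaseA} (or \eqref{newbatemanam}) in the form $I_-^\mu\{P_n^{(\mu,-\mu)}(x)\}$ — more precisely $I_-^\mu$ acting on $(1+x)^{-\mu}$ times a Jacobi polynomial of appropriate parameters — which by Bateman's formula returns $\tfrac{\Gamma(l-\mu+1)}{l!}$ times a Legendre polynomial $P_l$; (3) to get into the right basis, expand $h_j$ via the connection transform \eqref{ucoefrela} from $\{P_n^{(\af,\bt)}\}$ into the basis for which Bateman's formula applies cleanly, which produces exactly $\hat t_{lj}=\sum_{n=l}^N {}^{(\af,\bt)}C_{ln}^{(\mu,-\mu)} t_{nj}$; (4) assemble the sum over $l$ to obtain \eqref{qjmu01}. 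The main obstacle will be step (3): correctly identifying which connection problem appears, i.e.\ verifying that the transform needed to make $I_-^\mu\{(1+x)^{-\mu}h_j\}$ collapse onto Legendre polynomials via \eqref{specaseA} is precisely the $(\af,\bt)\to(\mu,-\mu)$ connection, and checking that the parameter arithmetic ($\alpha-\rho=-\mu$, $\rho=\mu$, $\beta$-shifts) lines up so that the Gamma-ratio $\Gamma(l-\mu+1)/l!$ emerges. The normalization constant $\zeta_0=1/\Gamma(1-\mu)$ should then drop out from evaluating $\widehat Q_0^\mu(-1)$ using $P_l(-1)=(-1)^l$ together with $\sum_l (\cdots)=\xi h_0(-1)$-type identity; confirming this requires a short but careful computation with the known value $h_0(-1)=1$.
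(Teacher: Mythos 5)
Your overall strategy is essentially the integral-side mirror of the paper's own argument: the paper expands $\widehat Q_j^\mu$ in Legendre polynomials, uses \eqref{newbatemanam3s} in the compact form ${}^R\widehat{D}_-^{\mu}P_n=\frac{n!}{\Gamma(n-\mu+1)}P_n^{(\mu,-\mu)}$, expands $h_j$ in $\{P_l^{(\mu,-\mu)}\}$ via the $(\alpha,\beta)\!\to\!(\mu,-\mu)$ connection, and matches coefficients, whereas you invert through Lemma \ref{zerolmm} and apply $I_-^\mu$; the two are equivalent. But two concrete points in your plan are not in order. First, the Bateman bookkeeping that you defer as ``the main obstacle'' is precisely the content of the formula, and the arithmetic you wrote down is wrong: the correct application is \eqref{newbatemanam} with $\rho=\mu$, $\alpha=\mu$, $\beta=-\mu$ (admissible since $-\mu>-1$), so that after expanding $h_j=\sum_l\hat t_{lj}P_l^{(\mu,-\mu)}$ one gets $I_-^\mu\{(1+x)^{-\mu}P_l^{(\mu,-\mu)}\}=\frac{\Gamma(l-\mu+1)}{l!}P_l$, the parameters landing on $(\alpha-\rho,\beta+\rho)=(0,0)$ --- not on ``$\alpha-\rho=-\mu$'', and \eqref{specaseA} (which assumes $\beta=0$) is not the relevant identity. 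This also resolves your hesitation about the connection coefficients: in the paper's notation ${}^{(\alpha,\beta)}C_{ln}^{(\mu,-\mu)}$ is the transform from $\{P_n^{(\alpha,\beta)}\}$ to $\{P_l^{(\mu,-\mu)}\}$, which is exactly what is needed to rewrite $h_j$ before integrating.

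Second, the case $j=0$ is a genuine gap as you state it: Lemma \ref{zerolmm} requires the right-hand side to lie in ${}_0\mathcal{P}_N$ and the solution to vanish at $x=-1$, whereas here ${}^R\widehat{D}_-^{\mu}\widehat Q_0^\mu=\xi h_0$ with $h_0(-1)=1$ and $\widehat Q_0^\mu(-1)=1$, so ``the same lemma'' does not apply directly. The missing ingredient is the constant shift: set $\widetilde Q_0^\mu=\widehat Q_0^\mu-1$, compute ${}^R\widehat{D}_-^{\mu}1=1/\Gamma(1-\mu)$ from \eqref{rlpower}, and apply \eqref{Rrhoeqn} at $x=-1$ to force $\xi=1/\Gamma(1-\mu)$; this is exactly how the paper obtains $\zeta_0$. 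Your alternative route --- evaluating $\widehat Q_0^\mu(-1)$ via $P_l(-1)=(-1)^l$ --- would require proving $\sum_{l}\frac{\Gamma(l-\mu+1)}{l!}\hat t_{l0}(-1)^l=\Gamma(1-\mu)$, which does not follow from $h_0(-1)=1$ by a short computation and is most naturally established by the same constant-shift argument. With these two repairs your proof goes through and coincides in substance with the paper's.
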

\begin{rem}\label{modm0} If $(\af,\bt)=(\mu,-\mu)$ with $\mu\in (0,1),$ we have $\hat t_{lj}=t_{lj},$ so $\widehat Q_j^\mu$ has the simplest form. \qed
\end{rem}

\subsubsection{ $\big\{\widehat Q^\mu_j\big\}_{j=0}^N$ with $\mu\in (1,2)$}   It is essential to derive the  identities like \eqref{identiyRLj}. Indeed,  using  \eqref{newpropC} and \eqref{Rrhoeqn}, we obtain  from the interpolating conditions in \eqref{BirkhoffFracRL2} that  
\begin{align}
& \big({}^R\hspace*{-2pt}\widehat{D}_-^{\mu}\,\widehat Q_0^\mu\big)(x)=(\tau_0+\kappa_0\, x) \hat h_0(x), \quad  \widehat Q_0^\mu(-1)=1,\quad  \widehat Q_0^\mu(1)=0; \label{identiyRL02}\\
& \big({}^R\hspace*{-2pt}\widehat{D}_-^{\mu}\,\widehat Q_j^\mu\big)(x)=\frac{x+\tau_j}{x_j+\tau_j} \hat h_j(x),  \quad \widehat Q_j^\mu(1)=0,\quad 1\le j\le N-1; \label{identiyRLj2}\\
& \big({}^R\hspace*{-2pt}\widehat{D}_-^{\mu}\,\widehat Q_N^\mu\big)(x)=\tau_N (1+x)\, \hat h_0(x),
\quad  \widehat Q_N^\mu(1)=1, \label{identiyRLN2}
\end{align}
where $\{\hat h_j\}_{j=0}^{N-1}$ are the Lagrange interpolating basis polynomials at  JGL points $\{x_j\}_{j=0}^{N-1},$ that is, 
\begin{equation}\label{hathj}
\hat h_j(x)\in {\mathcal  P}_{N-1},\quad \hat h_j(x_i)=\delta_{ij},\quad 0\le i,j\le N-1. 
\end{equation}
 In \eqref{identiyRL02}-\eqref{identiyRLN2}, 
 $\{\tau_j\}_{j=0}^{N}$ and $\kappa_0$ are constants to be determined by the corresponding conditions at $x=\pm 1$, e.g., $\widehat Q_j^\mu(1)=0$ in \eqref{identiyRLj2}. 
 It is noteworthy  that thanks to  \eqref{Rrhoeqn},  the interpolating condition: $\widehat Q_j^{\mu}(-1)=0$ is built in $\big({}^R\hspace*{-2pt}\widehat{D}_-^{\mu}\,\widehat Q_j^\mu\big)(-1)=0$ for $1\le j\le N.$

 In what follows, we shall use  the three-term  recurrence relation of Jacobi polynomials (cf.   \cite[(3.110)]{ShenTangWang2011}):
 \begin{equation}\label{ThreeTermExpress}
xP^{(\mu,1-\mu)}_{l}(x)=a_{l+1} P^{(\mu,1-\mu)}_{l+1}(x)+b_{l}P^{(\mu,1-\mu)}_{l}(x)+c_{l-1}P^{(\mu,1-\mu)}_{l-1}(x), \quad  l\geq 0,
\end{equation}
where $c_{-1}=0,$  $\mu\in (1,2),$ and 
\begin{equation}\label{3termcoef}
a_{l+1}=\displaystyle\frac{l+2}{2l+3},\quad b_l=\displaystyle\frac{1-2\mu}{(2l+1)(2l+3)},\quad c_{l-1}=\displaystyle\frac{(l+\mu)(l-\mu+1)}{(l+1)(2l+1)}.
\end{equation}
 
As before, it is necessary to expand $\{\hat h_j\}$ in terms of Jacobi polynomials with different parameters by using the notion of connection problems, so as to use compact  and closed-form formulas to compute the new basis.  We state below the connections of three expansions, and postpone the derivations in Appendix \ref{AppendixD}.  
 \begin{lmm}\label{Jcbiexplmm} Let $\big\{x_j=x_{N,j}^{(\alpha,\beta)}, \omega_j=\omega_{N,j}^{(\alpha,\beta)}\big\}_{j=0}^N $ {\rm(}with $\af,\bt>-1$ and $x_0=-x_N=-1${\rm)} be the JGL   quadrature nodes and weights, and let $\{\hat h_j\}_{j=0}^{N-1}$ be the Lagrange interpolating basis polynomials associated with  $\{x_j\}_{j=0}^{N-1}$ defined in \eqref{hathj}.  Then for $\mu\in (1,2),$  we have 
   \begin{equation}\label{hathjexp}
   \begin{split}
  \hat h_j(x)&=\sum_{n=0}^{N-1}\varrho_{nj}\, P_n^{(\af,\bt)}(x)=\sum_{l=0}^{N-1}\tilde \varrho_{lj}\, P_l^{(\mu,1-\mu)}(x)\\
  &=\hat \varrho_{0j}+\sum_{l=0}^{N-2}\hat \varrho_{l+1,j}\,(1+x)P_{l}^{(\mu,1-\mu)}(x),\;\; 0\le j\le N-1, 
  \end{split}
  \end{equation} 
  where $\hat \varrho_{00}=1$ and $ \hat \varrho_{0j}=0$   for $1\le j\le N-1.$ Moreover,  the coefficients can be computed by 
    \begin{align}
  &\varrho_{n0}=\frac{1}{\gamma_n^{(\af,\bt)}} \Big\{P_{n}^{(\af,\bt)}(-1)\omega_0-\frac{\beta+1}{\alpha+1} \frac{P_N^{(\af,\bt)}(1)} {P_N^{(\af,\bt)}(-1)} P_{n}^{(\af,\bt)}(1)\omega_N\Big\}, \label{newconstA}\\
  &\varrho_{nj}=\frac{1}{\gamma_n^{(\af,\bt)}} \Big\{P_{n}^{(\af,\bt)}(x_j)\omega_j-\frac{1}{\alpha+1} \frac{P_N^{(\af,\bt)}(1)} {P_N^{(\af,\bt)}(x_j)} P_{n}^{(\af,\bt)}(1)\omega_N\Big\},\;\;\; 1\le j\le N-1,
   \label{newconstBj}\\
 &  \tilde \varrho_{lj}=  \sum_{n=l}^{N-1} {}^{(\af,\bt)\!}C_{ln}^{(\mu,1-\mu)} \varrho_{nj},\quad 0\le l,j\le N-1,  \label{connectjj}
  \end{align}
and by  the backward recurrence relation: 
\begin{equation}\label{backformula}
\begin{split}
&\hat \varrho_{ij}=\frac 1 {a_i} \tilde \varrho_{ij}-\frac {b_i+1} {a_i} \hat \varrho_{i+1,j} -\frac {c_i} {a_i} \hat \varrho_{i+2,j}, \quad i=N-3,N-2,\cdots, 1,\\
&\hat \varrho_{N-1,j} = \frac 1 {a_{N-1}} \tilde \varrho_{N-1,j},\quad 
\hat \varrho_{N-2,j}=\frac 1 {a_{N-2}} \tilde \varrho_{N-2,j}-\frac {b_{N-2}+1} {a_{N-2}} \hat \varrho_{N-1,j},
\end{split}
\end{equation}
where $\{a_i,b_i,c_i\}$ are given  in \eqref{3termcoef}. 
\end{lmm}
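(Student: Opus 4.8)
The plan is to establish the three displayed expansions in \eqref{hathjexp} in turn, and then the three explicit coefficient formulas \eqref{newconstA}--\eqref{backformula}. The first expansion, $\hat h_j(x)=\sum_{n=0}^{N-1}\varrho_{nj} P_n^{(\af,\bt)}(x)$, is just the Jacobi expansion of a polynomial of degree $N-1$; since $\hat h_j\in{\mathcal P}_{N-1}$ this is exact, and the coefficients are obtained by orthogonality, $\varrho_{nj}=\gamma_n^{(\af,\bt)-1}\int_{-1}^1 \hat h_j P_n^{(\af,\bt)}\omega^{(\af,\bt)}$. The only subtlety is that the natural quadrature here is the $(N+1)$-point JGL rule associated with $\{x_i\}_{i=0}^N$, which integrates ${\mathcal P}_{2N-1}$ exactly by \eqref{newquad}; since $\hat h_j P_n^{(\af,\bt)}\in{\mathcal P}_{2N-2}$, the quadrature is exact. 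However $\hat h_j$ is \emph{not} the Lagrange basis at all $N+1$ points but only at the first $N$ points $\{x_i\}_{i=0}^{N-1}$, so $\hat h_j(x_N)\ne 0$ in general. I would express $\hat h_j$ in terms of the full Lagrange basis $\{h_i\}_{i=0}^N$ — concretely $\hat h_j = h_j + \hat h_j(x_N)\,h_N$ for $1\le j\le N-1$, and $\hat h_0 = h_0 + \hat h_0(x_N) h_N$ — and then compute $\hat h_j(x_N)$ from the known closed form of the Lagrange basis at JGL endpoints (equivalently, from $\hat h_j(x_N)=-\ell_j'/\ell_j$-type barycentric data), using $P_N^{(\af,\bt)}(\pm1)$ from \eqref{parity}. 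Carrying this through with the quadrature weights $\omega_0,\omega_j,\omega_N$ produces \eqref{newconstA} and \eqref{newconstBj}; the factor $-\frac{1}{\alpha+1}\frac{P_N^{(\af,\bt)}(1)}{P_N^{(\af,\bt)}(x_j)}$ is precisely $\hat h_j(x_N)$, and the $\frac{\beta+1}{\alpha+1}$ in \eqref{newconstA} combines the analogous endpoint evaluation with the value $\hat h_0(x_N)$.

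Next, the second expansion $\hat h_j(x)=\sum_{l=0}^{N-1}\tilde\varrho_{lj}P_l^{(\mu,1-\mu)}(x)$ is simply the change of basis from $\{P_n^{(\af,\bt)}\}$ to $\{P_l^{(\mu,1-\mu)}\}$; since $\mu\in(1,2)$ gives $\mu>-1$ and $1-\mu>-1$, both parameter pairs are admissible and $P_l^{(\mu,1-\mu)}$ has exact degree $l$ (using that the degree-reduction condition \eqref{reduction} fails here because $-(l+\mu+1-\mu)=-(l+1)\notin{\mathbb N}$ for $l\ge0$), so $\{P_l^{(\mu,1-\mu)}\}_{l=0}^{N-1}$ spans ${\mathcal P}_{N-1}$. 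Then \eqref{connectjj} is exactly the connection-coefficient relation \eqref{ucoefrela} applied with source parameters $(\af,\bt)$ and target parameters $(\mu,1-\mu)$, using upper-triangularity of the connection matrix to truncate the sum to $n\ge l$.

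Finally, the third expansion rewrites $\hat h_j$ in the basis $\{1\}\cup\{(1+x)P_l^{(\mu,1-\mu)}(x)\}_{l=0}^{N-2}$ of ${\mathcal P}_{N-1}$. The constant term $\hat\varrho_{0j}$ equals $\hat h_j(-1)=\delta_{0j}$ by the interpolation property \eqref{hathj}, which gives $\hat\varrho_{00}=1$ and $\hat\varrho_{0j}=0$ for $j\ge1$. For the remaining coefficients I would match the two middle expansions: write $(1+x)P_l^{(\mu,1-\mu)}(x)=P_l^{(\mu,1-\mu)}(x)+xP_l^{(\mu,1-\mu)}(x)$ and substitute the three-term recurrence \eqref{ThreeTermExpress}, so that $(1+x)P_l^{(\mu,1-\mu)}(x)=a_{l+1}P_{l+1}^{(\mu,1-\mu)}(x)+(b_l+1)P_l^{(\mu,1-\mu)}(x)+c_{l-1}P_{l-1}^{(\mu,1-\mu)}(x)$. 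Inserting the ansatz $\hat h_j-\hat\varrho_{0j}=\sum_{l=0}^{N-2}\hat\varrho_{l+1,j}(1+x)P_l^{(\mu,1-\mu)}(x)$, collecting the coefficient of each $P_i^{(\mu,1-\mu)}$, and equating with $\tilde\varrho_{ij}$ yields the linear system $\tilde\varrho_{ij}=a_i\hat\varrho_{ij}+(b_i+1)\hat\varrho_{i+1,j}+c_i\hat\varrho_{i+2,j}$ (with the convention that indices beyond $N-1$ contribute nothing), whose solution is the backward recurrence \eqref{backformula}, started from the top two equations that give $\hat\varrho_{N-1,j}$ and $\hat\varrho_{N-2,j}$.

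The main obstacle I expect is the bookkeeping in the first step: correctly relating $\hat h_j$ (Lagrange basis on the $N$-point subset) to the full $(N+1)$-point structures so that the quadrature identity \eqref{newquad} applies, and extracting the endpoint values $\hat h_j(\pm1)$ in closed form with the right $P_N^{(\af,\bt)}(\pm1)$ factors and the $(\af+1)$, $(\bt+1)$ denominators — these arise from differentiating $(1-x^2)$ against the node polynomial at the endpoints. Everything after that (the connection step and the recurrence inversion) is routine algebra driven by \eqref{ucoefrela} and \eqref{ThreeTermExpress}; the bulk of the detailed computation is deferred to Appendix \ref{AppendixD}.
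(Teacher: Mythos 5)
Your proposal follows essentially the same route as the paper's appendix proof: the exactness of the $(N+1)$-point JGL quadrature reduces the Jacobi coefficients of $\hat h_j\in{\mathcal P}_{N-1}$ to evaluations at $x_j$ and at $x_N=1$; the endpoint values $\hat h_j(1)$ and $\hat h_0(1)$ are obtained from the closed-form node-polynomial representation of $\hat h_j$ (nodes being the zeros of $(1+x)DP_N^{(\alpha,\beta)}(x)$) together with the Sturm--Liouville endpoint relations $2(\alpha+1)DP_N^{(\alpha,\beta)}(1)=\lambda_N^{(\alpha,\beta)}P_N^{(\alpha,\beta)}(1)$ and $2(\beta+1)DP_N^{(\alpha,\beta)}(-1)=-\lambda_N^{(\alpha,\beta)}P_N^{(\alpha,\beta)}(-1)$ (this, rather than literally ``differentiating $(1-x^2)$,'' is where the $(\alpha+1)$, $(\beta+1)$ denominators come from); \eqref{connectjj} is just \eqref{ucoefrela}; and \eqref{backformula} follows by substituting \eqref{ThreeTermExpress} and solving the resulting upper-triangular system by backward substitution, exactly as in the paper. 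The only cosmetic differences are that you organize the quadrature step via $\hat h_j=h_j+\hat h_j(1)\,h_N$ instead of summing the quadrature directly, and you obtain $\hat\varrho_{0j}=\delta_{0j}$ by evaluating the third expansion at $x=-1$ rather than from the first row of the triangular system; both are correct.
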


With the above preparations, we are ready to derive the explicit formulas of the new basis 
$\big\{\widehat Q_j^\mu\big\}_{j=0}^N$ with $\mu\in (1,2).$  We refer to  Appedix \ref{AppendixE} for the derivation.  
\begin{thm}\label{invRLmu12}  
Let $\big\{\tilde \varrho_{lj}, \hat\varrho_{lj}\big\}$ be the coefficients defined in  Lemma {\rm \ref{Jcbiexplmm}}, and   denote 
\begin{equation}\label{newconstA2}
d_{l}^\mu:=\frac{\Gamma(l+2-\mu)}{(l+1)!},\quad \varphi_l(x):=(1+x)P_l^{(0,1)}(x).
\end{equation}
 Then 
$\big\{\widehat Q_j^\mu\big\}_{j=0}^N$ at JGL points with $\mu\in (1,2)$ can be computed by 
\begin{itemize}
\item[(i)] for $j=0,$ 
\begin{equation}\label{Q0solunewm}
\begin{split}
\widehat Q_0^\mu(x)=&� 1+\Big(\tau_0 -\frac 1{\Gamma(1-\mu)} \Big)
\sum_{l=0}^{N-1} d_l^\mu
\tilde  \varrho_{l0}\,\varphi_l(x) 
+ \frac 1{\Gamma(1-\mu)} \sum_{l=0}^{N-2} d_l^\mu
\hat  \varrho_{l+1,0}\,\varphi_l(x), \\
{\rm where}\;\;&\;\;\tau_0-\frac 1{ \Gamma(1-\mu)}  =-\bigg\{\frac 1 2+ \frac 1{ \Gamma(1-\mu)} \sum_{l=0}^{N-2} d_l^\mu
\hat  \varrho_{l+1,0}\,\bigg\}\bigg/\sum_{l=0}^{N-1} d_l^\mu
\tilde  \varrho_{l0};
\end{split}
\end{equation} 
\item[(ii)] for $j=N,$
\begin{equation}\label{QNformula}
\widehat Q_N^\mu(x)= \frac  1 2 \sum_{l=0}^{N-1} d_l^\mu
\tilde  \varrho_{l0}\,\varphi_l(x)\bigg/\sum_{l=0}^{N-1} d_l^\mu
\tilde  \varrho_{l0}\,; 
\end{equation} 
\item[(ii)] for $1\le j \le N-1,$
\begin{equation}\label{Qjsoluform}
\begin{split}
&\widehat Q_j^\mu(x)= \frac {\tau_j}{x_j+\tau_j}\sum_{l=0}^{N-2} d_l^\mu 
\hat  \varrho_{l+1,j}\,\varphi_l(x) + \frac {1}{x_j+\tau_j} \\
 &\quad \times \sum_{l=0}^{N-2} d_l^\mu
\hat  \varrho_{l+1,j}\,\bigg\{\frac{l+2-\mu}{l+2} a_{l+1} \varphi_{l+1}(x)+b_l \varphi_{l}(x)+  \frac{l+1}{l+1-\mu} c_{l-1}\varphi_{l-1}(x)\bigg\},\\
 \end{split}
 \end{equation}
where   $\{a_l, b_l, c_l\}$ {\rm(}with $c_{-1}=0{\rm)}$ are defined in \eqref{3termcoef}, and 
\begin{equation}\label{tauj}
 \tau_j=-1+\mu \,\Gamma(2-\mu) \hat \varrho_{1j}\bigg/{\sum_{l=0}^{N-2} d_l^\mu \hat  \varrho_{l+1,j}}. 
 \end{equation}
\end{itemize}
\end{thm}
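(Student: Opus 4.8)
The plan is to reduce each $\widehat Q_j^\mu$ to a single application of $I_-^\mu$ and then to read off the coefficients from Lemma~\ref{Jcbiexplmm} and the recurrence \eqref{ThreeTermExpress}. I would begin from the identities \eqref{identiyRL02}--\eqref{identiyRLN2}, which exhibit $\big({}^R\hspace*{-2pt}\widehat D_-^{\mu}\,\widehat Q_j^\mu\big)(x)$ as a polynomial in $\mathcal P_N$ of the shape ``(affine factor)$\times\hat h_j(x)$'' carrying one free constant for $1\le j\le N-1$ and for $j=N$, and two for $j=0$; these shapes follow from \eqref{newpropC} (the image is in $\mathcal P_N$), the prescribed vanishing at the $N-1$ interior nodes, and---for $1\le j\le N$---the additional vanishing at $x_0=-1$ forced through \eqref{Rrhoeqn} by $\widehat Q_j^\mu(-1)=0$, so a dimension count fixes the number of free constants. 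Since $\widehat Q_j^\mu(-1)=0$ for $1\le j\le N-1$ and $j=N$, Lemma~\ref{zerolmm} gives $\widehat Q_j^\mu=I_-^\mu\big\{(1+x)^{-\mu}\cdot\text{(right side)}\big\}$; for $j=0$ I would write $\widehat Q_0^\mu=1+w$ with $w\in{}_0\mathcal P_N$, use ${}^R\hspace*{-2pt}\widehat D_-^{\mu}1=1/\Gamma(1-\mu)$ (from \eqref{rlpower}) and the built-in relation $\tau_0-\kappa_0=1/\Gamma(1-\mu)$ (again from \eqref{Rrhoeqn}) to rewrite $\big({}^R\hspace*{-2pt}\widehat D_-^{\mu}w\big)(x)=\kappa_0(1+x)\hat h_0(x)+\tfrac1{\Gamma(1-\mu)}\big(\hat h_0(x)-1\big)$, and apply Lemma~\ref{zerolmm} to $w$.

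The key computational device is the single identity, obtained from the Bateman formula \eqref{newbatemanam} with $\alpha=\mu$, $\beta=1-\mu$, $\rho=\mu$,
\[
 I_-^\mu\big\{(1+x)^{1-\mu}P_l^{(\mu,1-\mu)}(x)\big\}=\frac{\Gamma(l+2-\mu)}{\Gamma(l+2)}\,(1+x)P_l^{(0,1)}(x)=d_l^\mu\,\varphi_l(x),
\]
which is legitimate since $1-\mu\in(-1,0)$ and, as $\alpha+\beta=1$, no degeneracy in the sense of \eqref{reduction} occurs, so $P_l^{(\mu,1-\mu)}$ has degree exactly $l$. It then remains to expand each right side in the basis $\{(1+x)P_l^{(\mu,1-\mu)}\}_{l=0}^{N-1}$ of ${}_0\mathcal P_N$---and, wherever a factor $(1+x)\hat h_0$ occurs, in the basis $\{P_l^{(\mu,1-\mu)}\}_{l=0}^{N-1}$---and to apply the displayed identity termwise. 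Lemma~\ref{Jcbiexplmm} supplies exactly the two expansions needed, $\hat h_j=\hat\varrho_{0j}+\sum_l\hat\varrho_{l+1,j}(1+x)P_l^{(\mu,1-\mu)}$ and $\hat h_j=\sum_l\tilde\varrho_{lj}P_l^{(\mu,1-\mu)}$, while the multiplication by $x$ contributed by the affine prefactor is handled by \eqref{ThreeTermExpress}. This yields \eqref{Q0solunewm}, \eqref{QNformula} and \eqref{Qjsoluform} up to the values of the free constants, the coefficients $d_{l\pm1}^\mu$ that surface from the recurrence being recast into the stated form via $d_{l+1}^\mu=\tfrac{l+2-\mu}{l+2}\,d_l^\mu$ and $d_{l-1}^\mu=\tfrac{l+1}{l+1-\mu}\,d_l^\mu$.

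Finally, the free constants are determined by the remaining endpoint values $\widehat Q_0^\mu(1)=0$, $\widehat Q_N^\mu(1)=1$ and $\widehat Q_j^\mu(1)=0$ ($1\le j\le N-1$), using $\varphi_l(-1)=0$ and $\varphi_l(1)=2$ (the latter because $P_l^{(0,1)}(1)=1$ by \eqref{parity}). For $j=0,N$ this is immediate and gives the quotients in \eqref{Q0solunewm}--\eqref{QNformula}; the denominators are nonzero because the modified-RL Birkhoff basis is well defined, equivalently because ${}^R\hspace*{-2pt}\widehat{\bs D}_{\rm in}^{(\mu)}$ is invertible (Theorem~\ref{Th:inverseRL}). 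For $1\le j\le N-1$ the condition $\widehat Q_j^\mu(1)=0$ reduces to $(\tau_j-1)\sum_l d_l^\mu\hat\varrho_{l+1,j}+\sum_l d_l^\mu\tilde\varrho_{lj}=0$, and I expect the one genuinely delicate point of the whole argument to be showing this collapses to \eqref{tauj}: substituting the three-term relation between $\{\tilde\varrho_{lj}\}$ and $\{\hat\varrho_{lj}\}$ from \eqref{backformula} and regrouping by $\hat\varrho_{kj}$, one must verify the identity $d_k^\mu a_k+d_{k-1}^\mu b_{k-1}+d_{k-2}^\mu c_{k-2}=d_{k-1}^\mu$ for $k\ge 2$ with the coefficients of \eqref{3termcoef}---a short computation in which the pooled numerator reduces to $(2k-1)(2k+1)$---together with the boundary relation $d_1^\mu a_1+d_0^\mu b_0=d_0^\mu-\mu\,\Gamma(2-\mu)$ at $k=1$, which is precisely what produces the $\mu\,\Gamma(2-\mu)\hat\varrho_{1j}$ term in \eqref{tauj}. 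Apart from this telescoping step and the bookkeeping of index shifts where the $(1+x)$ factor meets the recurrence, the rest is routine once Lemmas~\ref{zerolmm} and \ref{Jcbiexplmm} and the Bateman identity are in place.
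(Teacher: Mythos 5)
Your proposal is correct and follows essentially the paper's own route: reduce to the shapes \eqref{identiyRL02}--\eqref{identiyRLN2}, invert via Lemma \ref{zerolmm}, apply the Bateman identity \eqref{newbatemanam} (with $\rho=\alpha=\mu$, $\beta=1-\mu$) termwise to the expansions of Lemma \ref{Jcbiexplmm}, handle the factor $x$ by \eqref{ThreeTermExpress}, and fix the constants by the conditions at $x=\pm 1$ using $P_l^{(0,1)}(1)=1$. The only (cosmetic) difference is your derivation of \eqref{tauj}: you split $x=(1+x)-1$ and convert $\sum_l d_l^\mu\tilde\varrho_{lj}$ to the $\hat\varrho$-coefficients via \eqref{backformula}, whereas the paper evaluates \eqref{newpfBJj2} directly at $x=1$; both reduce to the same telescoping identity (your $d_k^\mu a_k+d_{k-1}^\mu b_{k-1}+d_{k-2}^\mu c_{k-2}=d_{k-1}^\mu$ is the paper's bracket identity at $l=k-1$ multiplied by $d_{k-1}^\mu$), and both, like the paper, leave the nonvanishing of the denominators implicit.
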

\begin{rem}\label{modm12}  We see from \eqref{connectjj} that if $(\af,\bt)=(\mu,1-\mu)$ with $\mu\in (1,2),$  the connections coefficients are not involved, so   $\widehat Q_j^\mu$ has simpler form. \qed
\end{rem}

\section{Well-conditioned collocation schemes and numerical results}
\setcounter{equation}{0}
\setcounter{lmm}{0}
\setcounter{thm}{0}

In this section, we apply the tools developed in previous sections  to construct well-conditioned collocation schemes for initial-valued  or boundary-valued FDEs, and provide ample numerical results to  show the accuracy and stability of the methods. 
 
\subsection{Initial-valued Caputo FDEs} To fix the idea, we first consider the Caputo  FDE of order $\mu\in (0,1):$
\begin{equation}\label{CFDE00}
{^C}\hspace*{-3pt}D_-^\mu u(x)+\lambda(x) u(x)=f(x),\quad x\in (-1,1];\quad u(-1)=u_-, 
\end{equation}
where $\lambda, f$ are given continuous functions, and $u_-$  is a given constant.  
The collocation scheme is to find $u_N\in {\mathcal P}_N$ such that 
\begin{equation}\label{COL00}
{^C}\hspace*{-3pt}D_-^\mu u_N(x_j)+\lambda(x_j) u_N(x_j)=f(x_j),\quad 1\le j\le N;\quad u_N(-1)=u_-.
\end{equation}

The corresponding linear  system under the Lagrange basis polynomials  $\{h_j\}$ (L-COL) becomes 
\begin{equation}\label{COLsys00}
\big({}^C\hspace*{-3pt}{\bs D}_{\rm in}^{(\mu)}+\bs \Lambda \big)\bs u=\bs f,
\end{equation}
where ${}^C\hspace*{-3pt}{\bs D}_{\rm in}^{(\mu)}$ is defined as in \eqref{submatix}, 
$\bs \Lambda={\rm diag}(\lambda(x_1),\cdots,\lambda(x_N)),$ and 
\begin{equation}\label{matrixnews}
\bs u=\big(u_N(x_1),\cdots, u_N(x_N)\big)^t,\;\; \bs f=\big(f(x_1)-u_-{^C}\hspace*{-3pt}D_-^\mu h_0(x_1), \cdots, f(x_N)-u_-{^C}\hspace*{-3pt}D_-^\mu h_0(x_N)\big)^t.
\end{equation}

The collocation system under the Birkhoff interpolation basis polynomials  $\{Q_j^\mu\}$ in 
\eqref{BirkhoffFrac}  (B-COL) becomes 
\begin{equation}\label{COLsys11}
\big(\bs I_{N}+\bs \Lambda \bs Q^{(\mu)} \big)\bs v=\bs g,
\end{equation}
where 
\begin{equation}\label{vjeqn}
u_N(x)=u_- +\sum_{i=1}^N v_j Q_j^\mu(x), \quad \bs v=\big(v_1,\cdots,v_N\big)^t, 
\end{equation}
and  $\bs g=\big(f(x_1)-u_-\lambda(x_1),\cdots, f(x_N)-u_-\lambda(x_N)\big)^t$.  
It is noteworthy that different from  \eqref{COLsys00},  the unknowns of \eqref{COLsys11} are not the approximation of $u$ at the collocation points, but of the Caputo fractional derivative values  in view of \eqref{Birkintep}.

Thanks to Theorem \ref{Th:inverse}, we can precondition \eqref{COLsys00} and obtain the PL-COL system:
\begin{equation}\label{COLsys22}
\big(\bs I_{N}+ \bs Q^{(\mu)} \bs \Lambda \big)\bs u=\bs Q^{(\mu)} \bs f. 
\end{equation}

In the computation, we take $\lambda(x)=2+\sin(25x)$ and $u(x)=E_{\mu,1}(-2(1+x)^{\mu})$ with $\mu=0.8$ in \eqref{CFDE00}, where  the Mittag-Leffler function is defined by 
\begin{equation}\label{Mlfunc}
E_{\alpha,\beta}(z)=\sum_{n=0}^\infty \frac{z^n}{\Gamma(n \alpha  +\beta)}. 
\end{equation}
In view of Remark \ref{imptrem},  we choose the JGL points with $(\alpha,\beta)=(\mu-1,1-\mu)=(-0.2,0.2).$ 
We compare the condition numbers,  number of iterations (using  BiCGSTAB in Matlab)  and convergence behaviour (in discrete $L^2$-norm on fine equally-spaced grids)  of three schemes  (see  Figure  \ref{uhatvsuNhat}). 
Observe from Figure \ref{uhatvsuNhat} (left) that the condition number of usual L-COL divided by $N^{2\mu}$ behaves 
like a constant, while that of PL-COL and B-COL remains a constant even for $N$ up to $2000.$ 
As a result, the latter two schemes only require about $8$ iterations to converge, while the usual L-COL scheme requires  much more iterations with a degradation of  accuracy as depicted in Figure \ref{uhatvsuNhat} (middle). We record the convergence history of three methods in Figure \ref{uhatvsuNhat} (right), and observe that two new schemes are stable even for very large $N.$

\begin{figure}[h!]
  ~\hspace*{-16pt}  \includegraphics[width=0.38\textwidth]{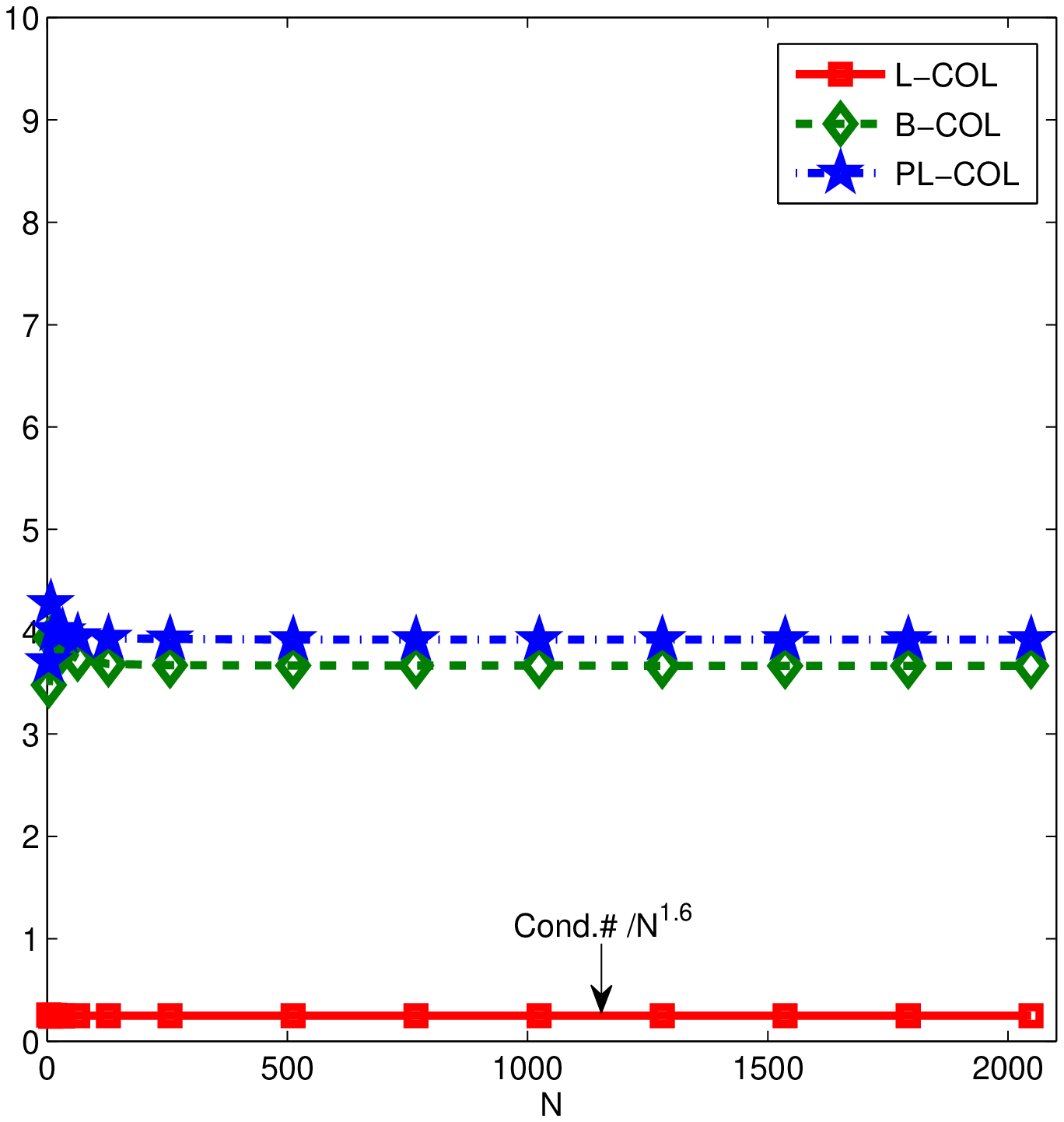}\hspace*{-15pt}
     \includegraphics[width=0.38\textwidth]{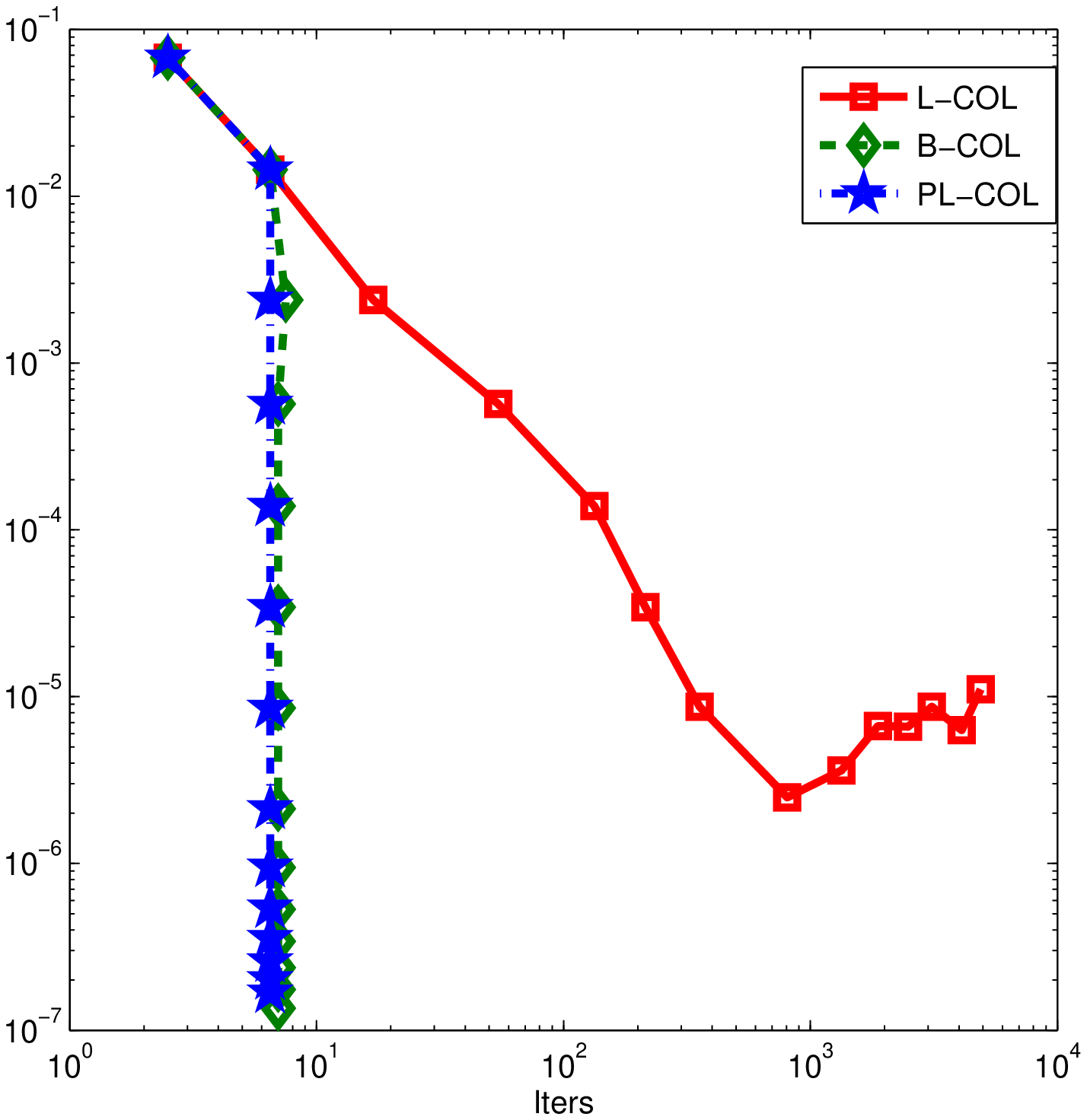}\hspace*{-14pt}
     \includegraphics[width=0.38\textwidth]{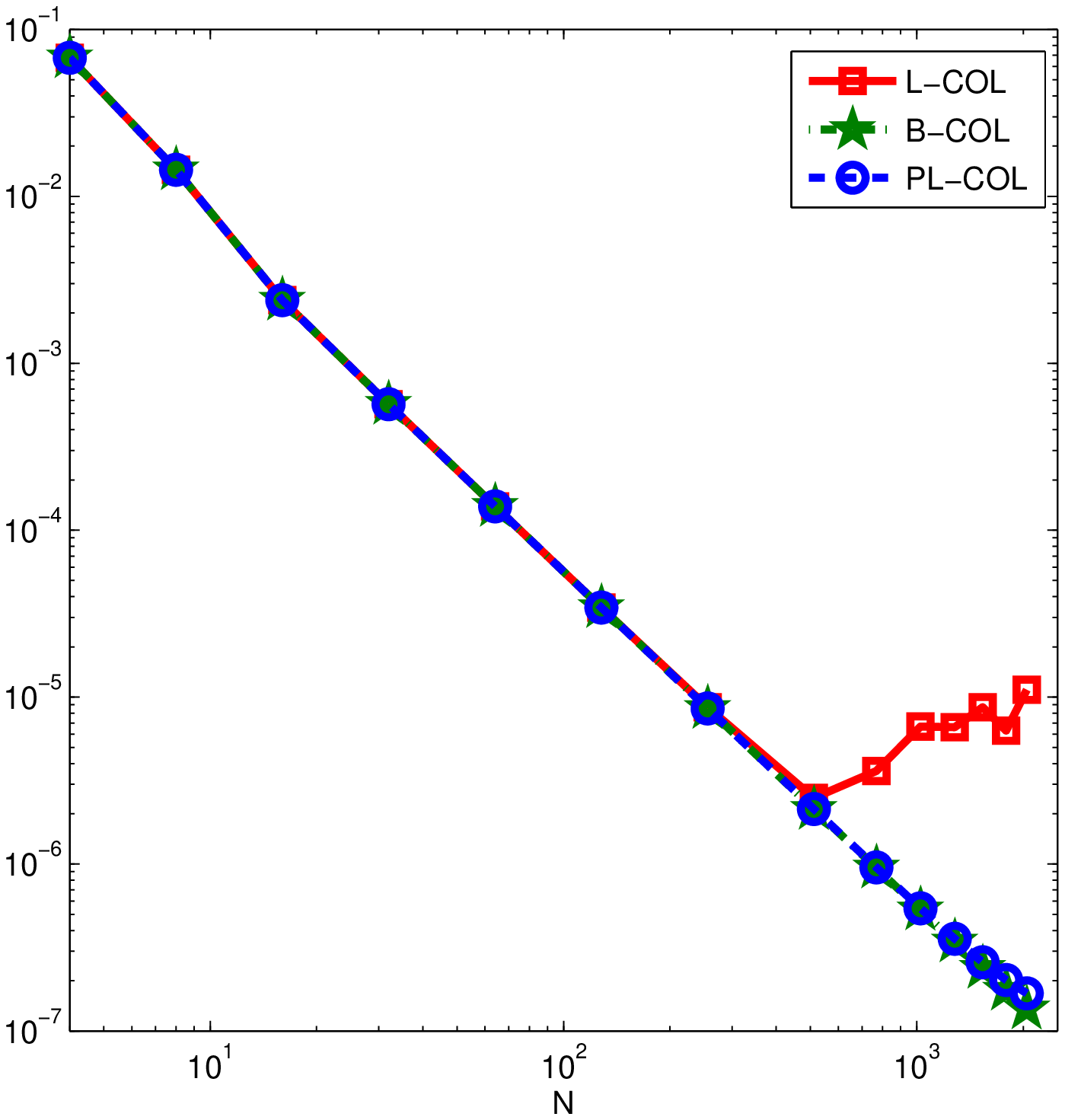}
   \caption{Comparison of condition numbers (left), iteration numbers against errors (middle), and errors against $N$ at convergence  in log-log scale  (right) for  (\ref{CFDE00}). }
    \label{uhatvsuNhat}
\end{figure}

%

\subsection{Boundary-valued Caputo FDEs}   We now turn to  the boundary value problem:
\begin{equation}\label{Bvp00}
\begin{split}
& {^C}\hspace*{-3pt}D_-^\mu u(x)+\lambda_1(x){^C}\hspace*{-3pt}D_-^\nu u(x)+\lambda_2(x) u(x)=f(x),\quad x\in (-1,1);\\
& u(-1)=u_-, \quad u(1)=u_+,\quad  0<\nu<\mu,\;\;\;  \mu\in (1,2),
\end{split}
\end{equation}
where $\lambda_1,\lambda_2$ and $f$ are given continuous functions, and $u_\pm$ are given constants. 

\begin{figure}[h!]
  ~\hspace*{-16pt}  \includegraphics[width=0.38\textwidth]{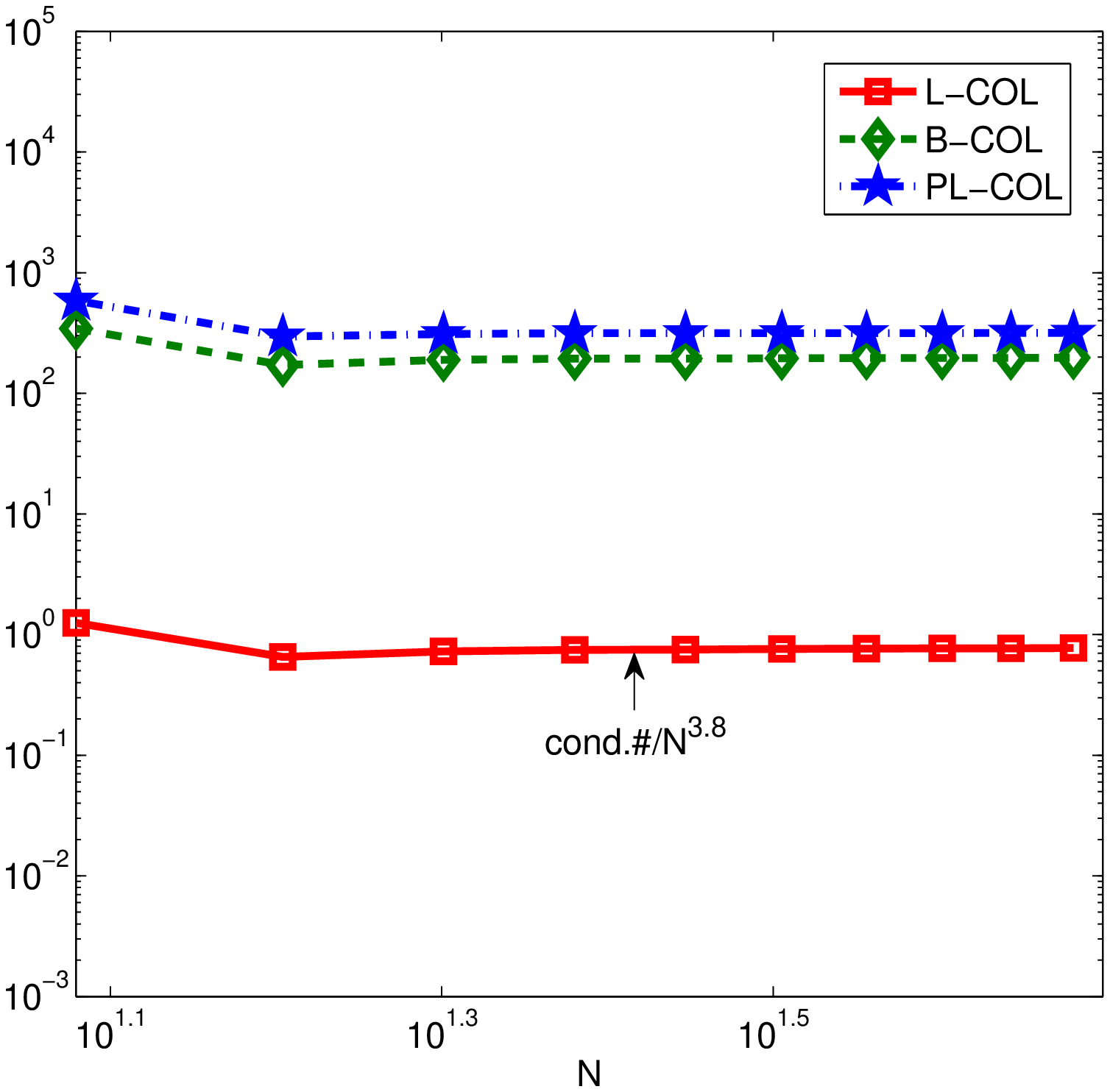}\hspace*{-15pt}
     \includegraphics[width=0.38\textwidth]{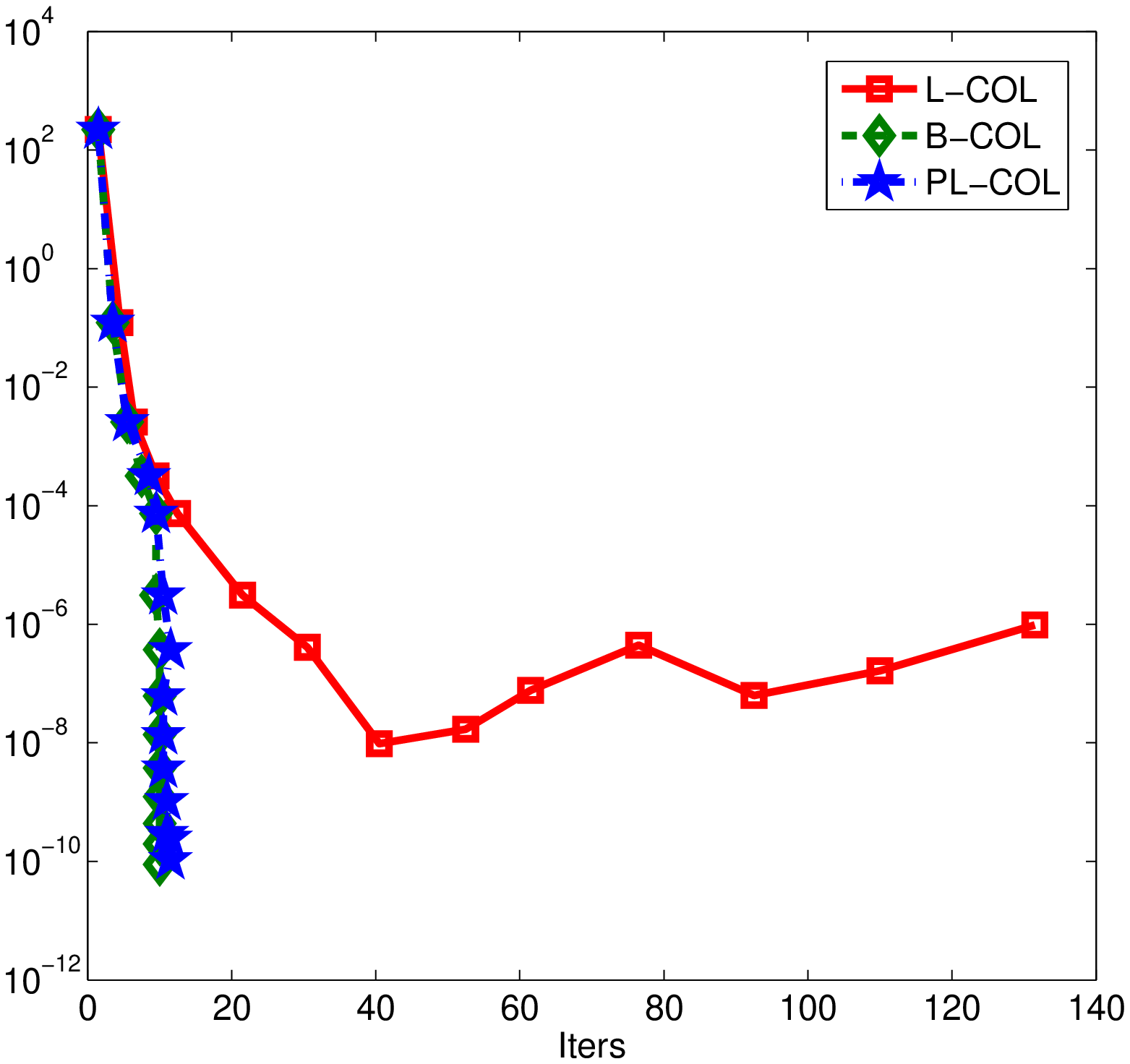}\hspace*{-14pt}
     \includegraphics[width=0.38\textwidth]{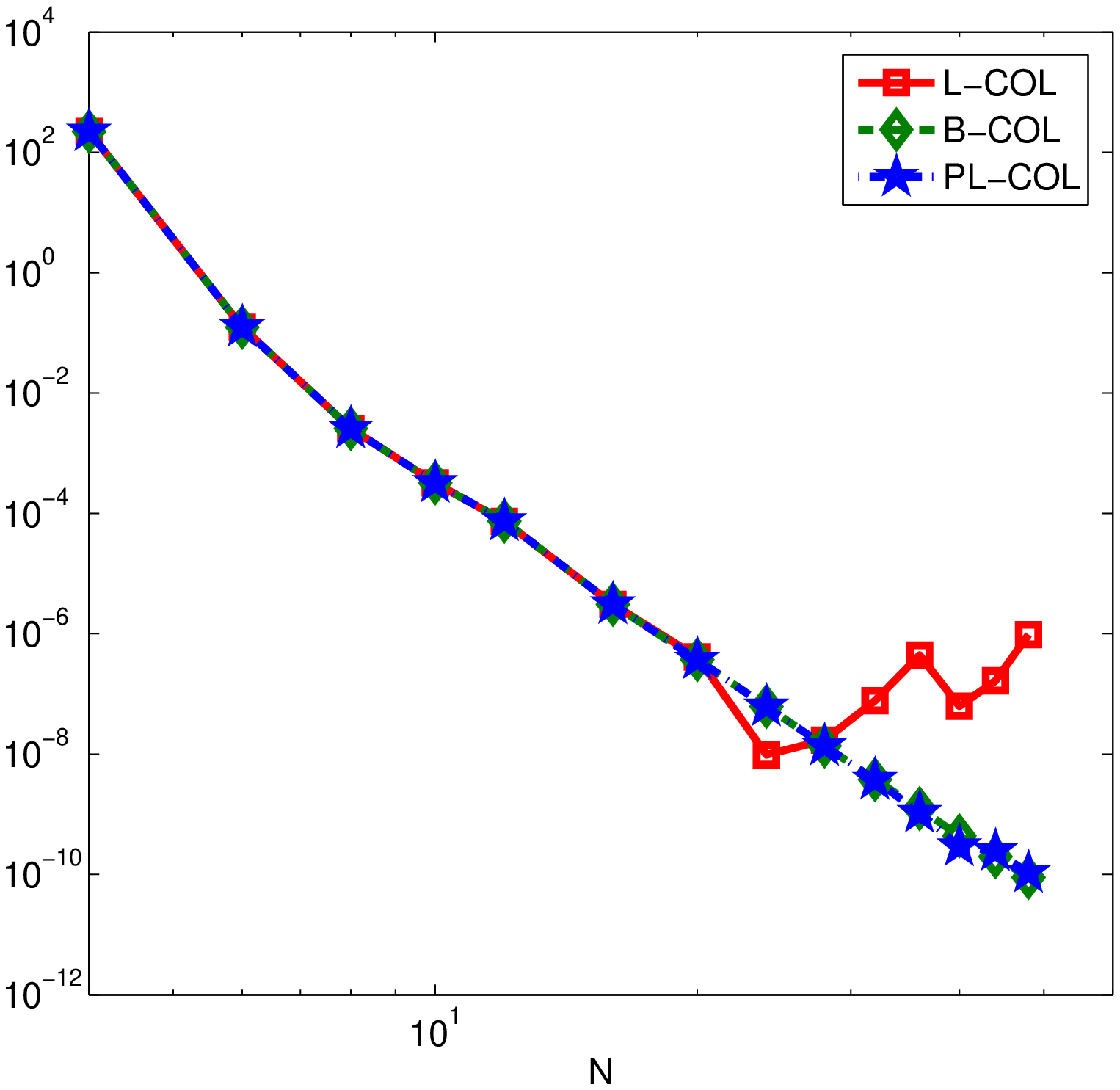}
   \caption{Comparison of condition numbers (left), iteration numbers against errors (middle), and errors against $N$ at convergence  in log-log scale  (right) for  (\ref{Bvp00}).}
    \label{uhatvsuNhatB}
\end{figure}
 
With a pre-computation of the Caputo fractional differentiation matrices of order $\mu$ and $\nu$ in Subsection \ref{sub:matrixDiff},  we can formulate the L-COL scheme as \eqref{COLsys00} straightforwardly.    The counterpart of 
\eqref{COLsys11} i.e., the B-COL scheme, can be formulated as follows: find  (cf. \eqref{Birkintep2nd}) 
\begin{equation}\label{Birkintep2ndss}
u_N(x)=u_N^*(x)+\sum_{j=1}^{N-1}  v_j\, Q_j^\mu(x),\quad \mu\in (1,2); \quad u_N^*(x):=\frac{1-x} 2 u_-+\frac{1+x} 2 u_+, 
\end{equation}
such that 
\begin{equation}\label{COLsys1mu2}
\big(\bs I_{N-1}+\bs \Lambda_1 \bar {\bs Q}^{(\nu)} +\bs \Lambda_2 \bs Q^{(\mu)} \big)\bs v=\bs g,
\end{equation}
where $\bs \Lambda_i={\rm diag}(\lambda_i(x_1),\cdots,\lambda_i(x_{N-1})), i=1,2,$ $\bar {\bs Q}^{(\nu)}_{ij}=
{^C}\hspace*{-3pt}D_-^\nu Q_j^\mu(x_i), 1\le i,j\le N-1,$ and
$\bs g=\big(f(x_1)-q_*(x_1),\cdots, f(x_{N-1})-q_*(x_{N-1})\big)^t$ with  $q_*= \lambda_1{^C}\hspace*{-3pt}D_-^\nu u_N^*+\lambda_2\, u_N^*.$ Note that the entries of  $\bar {\bs Q}^{(\nu)}$ can be evaluated by   Theorem \ref{JacobiForm2}, \eqref{DSc} and  \eqref{dQj2}-\eqref{Phisbas}. Here, we omit  the details.

\begin{rem}\label{justifA}  If $\lambda_1=0$ and $\lambda_2$ is a constant, we can follow \cite[Proposition 3.5]{Wan.SZ14} to justify  the coefficient matrix of \eqref{COLsys1mu2} is well-conditioned.  Indeed, thanks to Theorem \ref{Th:inverse}, the eigenvalues  $\sigma$ of $\bs I_{N-1}-\lambda_2 \bs Q^{(\mu)}$ satisfy 
$$ 1+{\lambda_2} {\lambda_{\rm max}^{-1}} \le \sigma \le 1+  {\lambda_2} {\lambda_{\rm min}^{-1}},$$
where $\lambda_{\rm max }$ and $\lambda_{\rm min}$ are respectively the largest and smallest eigenvalues of ${}^C\hspace*{-3pt}{\bs D}_{\rm in}^{(\mu)}.$  Since $\lambda_{\rm min}=O(1)$ (see Figure \ref{uhatvsuNhat00} (right)), the condition number of $\bs I_{N-1}-\lambda_2 \bs Q^{(\mu)}$  is independent of $N.$ \qed 
\end{rem}

Like \eqref{COLsys22}, we can precondition  the L-COL scheme by ${\bs Q}^{(\mu)}$  which leads  to the PL-COL system.  

In the following comparison, we set $\mu=1.9,  \nu=0.7$ and $(\alpha,\beta)=(-0.1, 0.1)$ (cf.  Remark \ref{imptrem}), and  take 
\begin{equation}\label{lam12}
 \lambda_{1}(x)=2+\sin(4\pi x),\;\;\;\;  \lambda_{2}(x)=2+\cos x,
 \end{equation}
 and 
 \begin{equation}\label{uexct}
 \;\; u(x)=e^{1+x}+(1+x)^{6+{4}/{7}}-2(1+x)^{5+{4}/{7}},
 \end{equation}
 where we can use the formula
 $${^C}\hspace*{-3pt}D_-^\mu e^{1+x}= (1+x)^{k-\mu}E_{1,k+1-\mu}(1+x),\quad \mu\in (k-1,k),\;\; k=1,2, $$
 to work out $f(x).$

Once again, we observe from Figure  \ref{uhatvsuNhatB} that the new schemes: B-COL and PL-COL are well-conditioned, 
attain the expected convergence order about   $10$ iterations, and lead to stable computation for large   $N.$

\subsection{Riemann-Liouville FDEs}  Consider the Riemann-Liouville version of  \eqref{Bvp00}:
\begin{equation}\label{Bvp00A}
\begin{split}
& {^R}\hspace*{-2pt}D_-^\mu u(x)+\lambda_1(x){^R}\hspace*{-2pt}D_-^\nu u(x)+\lambda_2(x) u(x)=f(x),\quad x\in (-1,1);\\
& u(-1)=u_-, \quad u(1)=u_+,\quad  0<\nu<\mu,\;\;\;  \mu\in (1,2),
\end{split}
\end{equation}
where $\lambda_1,\lambda_2$ and $f$ are given continuous functions, and $u_\pm$ are given constants. 

For a better treatment of the singularity,  we consider the modified Riemann-Liouville fractional collocation scheme:
 find $u_N\in {\mathcal P}_N$ such that 
\begin{equation}\label{Bvp01}
\begin{split}
& {^R}\hspace*{-2pt}\widehat{D}_-^\mu u_N(x_j)+\hat \lambda_1(x_j){^R}\hspace*{-2pt}\widehat{D}_-^\nu u_N(x_j)+\hat \lambda_2(x_j) u_N(x_j)=\hat f(x_j),\quad 1\le j\le N;\\
& u_N(-1)=u_-, \quad u_N(1)=u_+,\quad  0<\nu<\mu,\;\; \mu\in (1,2), 
\end{split}
\end{equation}
where $\hat\lambda_1=(1+x)^{\mu-\nu}\lambda_1,$ $\hat\lambda_2=(1+x)^{\mu}\lambda_2,$ and $\hat f=(1+x)^{\mu} f.$ 

Here, we  just focus on    the collocation system using  the new basis in \eqref{RLBirkform}, that is,  
\begin{equation}\label{RLBirkformB}
u_N(x)=u_- \widehat Q_0^{\mu}(x)+u_+\widehat Q_N^{\mu}(x)+\sum_{j=1}^{N-1} v_j \,\widehat Q_j^\mu(x),\quad \mu\in (1,2). 
\end{equation}
Then one  can write down  the B-COL system  in a fashion very similar to \eqref{COLsys1mu2} with only a change of basis. Correspondingly,  we denote the matrix of the linear system by $\bs A:=\bs I_{N-1}+\bs \Lambda_1 \widetilde  {\bs Q}^{(\nu)} +\bs \Lambda_2 \widehat {\bs Q}^{(\mu)}.$

We first show that the B-COL scheme enjoys  spectral accuracy (i.e., exponential convergence),  when the underlying solution is sufficiently smooth. For this purpose, we take 
\begin{equation}\label{exactu2}
u(x)=e^{-(1+x)}-\frac{1-x}{2}-e^{-2}\frac{1+x}{2},
\end{equation}
and $\lambda_1,\lambda_2$ to be  the same as in   \eqref{lam12}. In Figure \ref{spectraltest}, we plot discrete $L^2$-errors for various pairs of $(\mu,\nu)$ of the B-COL schemes for both  Caputo and Riemann-Liouville fractional boundary value problems (BVPs) 
\eqref{Bvp00} and \eqref{Bvp00A}  under the same setting. We observe the exponential decay  (i.e., $O(e^{-cN})$ for some $c>0$) of the errors.   
Both schemes take about $10$ iterations to  converge, while much more iterations are needed  and severe round-off errors  are induced   if one uses the standard  L-COL approach.   
\begin{figure}[h!]
  \centering
    \includegraphics[width=0.49\textwidth]{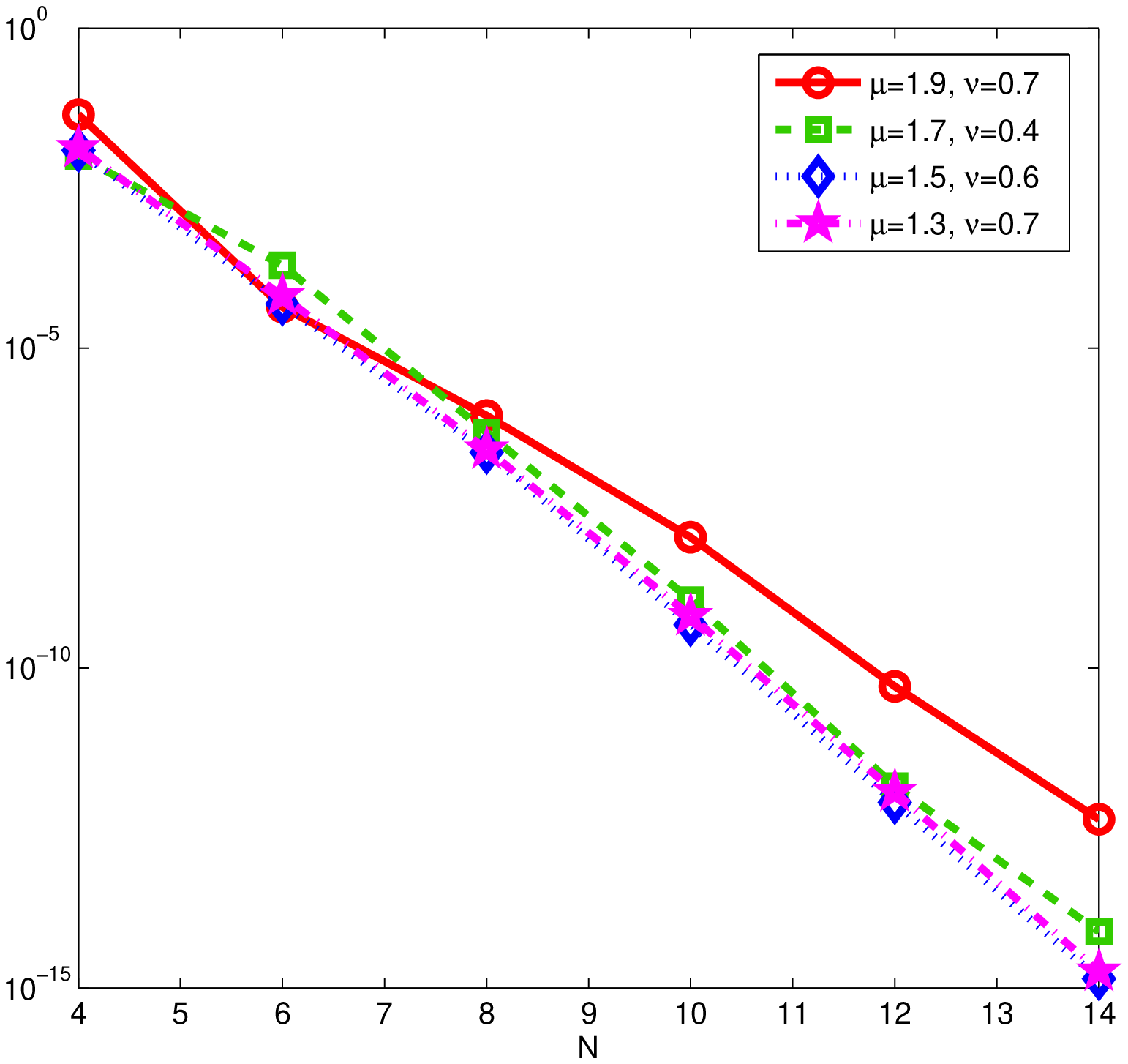}
     \includegraphics[width=0.49\textwidth]{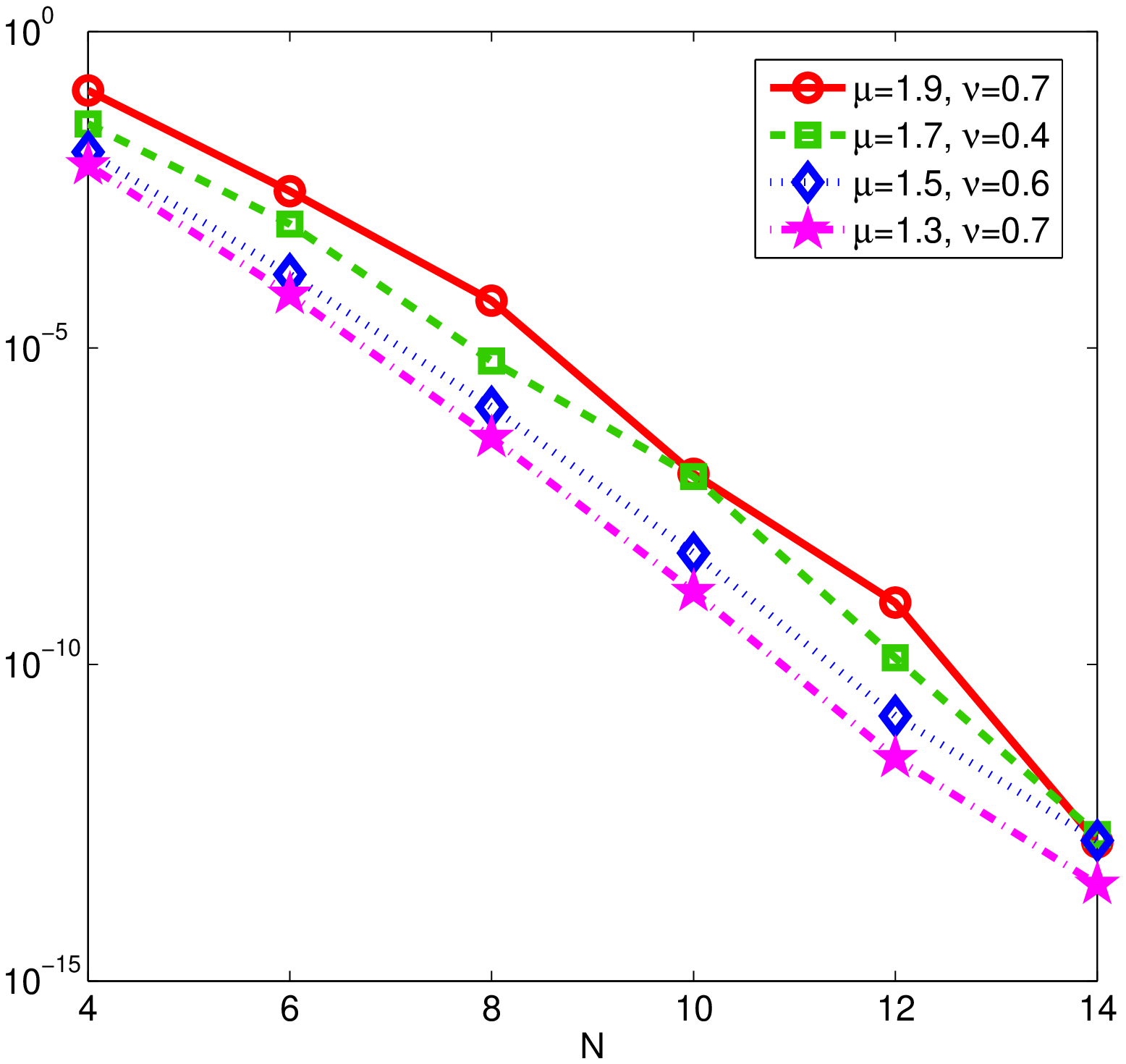}
   \caption{Errors against $N$   of  the B-COL schemes for Caputo fractional BVP   (\ref{Bvp00}) (left) and Riemann-Liouville fractional BVP (\ref{Bvp00A}) (right) with 
   various $\mu,\nu$, where $\lambda_1,\lambda_2$ are given in (\ref{lam12}), and the exact solution $u$ is given in (\ref{exactu2}).}
    \label{spectraltest} 
\end{figure}

We further test the new B-COL method on \eqref{Bvp00A} with smooth coefficients but large derivative: 
\begin{equation}\label{lam34}
\lambda_{1}(x)=1+e^{-1000x^2},\quad  \lambda_{2}(x)=1+e^{-1000(x+0.2)^2},
\end{equation} 
and with the exact solution having finite regularity in the usual Sobolev space:
 \begin{equation}\label{u34}
 u(x)=E_{\mu,1}\big(-(1+x)^\mu/2\big)+\frac{1-E_{\mu,1}(-2^{\mu-1})}{2}(1+x)-1.
 \end{equation}
 We tabulate in Table \ref{TbEigValueRLBVP3} the discrete $L^2$-errors, number of iterations  and the  second smallest and largest eigenvalues (in modulus).  Once again, the scheme converges within a few iterations even for very large $N.$
 In fact, as we observed from Figure \ref{uhatvsuNhat00} (right), the smallest eigenvalue of ${}^R\hspace*{-2pt}\widehat{\bs D}_{{\rm in}}^{(\mu)}$ in \eqref{submatixRL}  still mildly depends on $N.$ As a result, the condition number of $\bs A$ grows mildly with respect to $N.$  However, it is interesting to find that  the eigenvalues in modulus of $\bs A$ (denoted by $\{|\sigma_j|\}_{j=1}^{N-1}$ which are  arranged in ascending order)  are concentrated in the sense that 
 \begin{equation}\label{sigmacon}
 O(1)=|\sigma_2|\le |\sigma_j|\le |\sigma_{N-2}|=O(1), \quad 2\le j\le N-2.
 \end{equation}
 Thanks to this remarkable property,  the iterative solver for the modified Riemann-Liouville system is actually as fast as the previous  Caputo system where the coefficient matrix is well-conditioned.    
\begin{table}[!htbp]
\centering
\caption{Errors, number of iterations and concentration of eigenvalues of $\bs A$} \label{TbEigValueRLBVP3}
\begin{tabular}{|c|c|c|c|c|c|c|c|c|}
 \hline
  \multirow{2}{*}{$N$}&
    \multicolumn{4}{|c|}{$\mu=1.5,\;\;\nu=0.6$} &
      \multicolumn{4}{|c|}{$\mu=1.9,\;\;\nu=0.7$} \\
      \cline{2-9}
     &$|\sigma_2|$ & $|\sigma_{N-2}|$  & Iters &  Errors  &  $|\sigma_2|$ & $|\sigma_{N-2}|$  & Iters & Errors\\
\cline{1-9}
   8&0.7& 1.0&   7& 8.58e-03&0.4& 1.0&  6& 2.66e-03\\
  16&0.6& 1.1&  12& 2.03e-03&0.2& 1.7&  8& 3.69e-04\\
  32&0.6& 1.2&  12& 5.39e-04&0.2& 3.0&  8& 5.49e-05\\
  64&0.6& 1.3&  12& 1.31e-04&0.1& 5.0&  8&  7.46e-06\\
 128&0.6& 1.5&  12& 3.24e-05&0.1& 7.4&  8&  1.06e-06\\
 256&0.6& 1.6&  12& 8.08e-06&0.1& 9.9&  8&  1.53e-07\\
 512&0.6& 1.7&  13& 2.02e-06&0.1& 12.7& 8&  2.21e-08\\
1024&0.6& 1.7&  13& 5.04e-07&0.1& 21.1&  8&  3.69e-09\\
                 \hline
\end{tabular} 
\end{table}

\subsection{Concluding remarks}   In this paper, we provided an explicit and compact means for computing Caputo and modified Riemann-Liouville  F-PSDMs of any order, and introduced new fractional collocation schemes using fractional Birkhoff interpolation basis functions.   We showed that the new approaches significantly outperformed the standard collocation approximation using Lagrange interpolation basis. 

As a final remark, we point out two topics  worthy of future investigation along this line, which we wish to explore in  forthcoming  papers.  The first is to analyze the fractional Birkhoff interpolation errors and understand the approximability of  the new interpolation basis functions from theoretical perspective.  The second is to extend the idea and techniques in this paper to study the fractional collocation methods using the nodal basis $\big\{(1+x)^\mu h_j(x)/(1+x_j)^\mu\big\}$  (see \cite{zayernouri2014fractional}, i.e., the counterpart of Jacobi poly-fractonomials \cite{zayernouri2013fractional} and generalised Jacobi functions \cite{Wan.SZ14}).  

%

\vskip 10pt
\begin{appendix}
\section{Proof of Theorem \ref{JacobimatCa}} \label{AppendixB}
\renewcommand{\theequation}{A.\arabic{equation}}
\setcounter{equation}{0}

We expand $\{h_j'\}$
in terms of Legendre polynomials, and  look for $\{s_{lj}\}$ such that 
\begin{equation}\label{Dhj}
h_j'(x)= \frac 1 2\sum_{n=1}^N  (n+\alpha+\beta+1) t_{nj} P_{n-1}^{(\af+1,\bt+1)}(x)=\sum_{l=1}^N  s_{lj} P_{l-1}(x),  \quad 0\le j\le N,
\end{equation}
where $\{t_{nj}\}$ are given in \eqref{jacbasisfun} and we used  the derivative formula (cf. \cite{szeg75}): 
\begin{equation}\label{derivform}
D P_n^{(\af,\bt)}(x)=\frac 1 2 (n+\af+\bt+1) P_{n-1}^{(\af+1,\bt+1)}(x),\quad n\ge 1. 
\end{equation}
By \eqref{uluk}-\eqref{ucoefrela},   we find that 
\begin{equation}\label{connptbs}
 s_{lj}=\frac 1 2  \sum_{n=1}^N  (n+\alpha+\beta+1)  {}^{(\af+1,\bt+1)\!}C_{l-1,n-1}^{(0,0)}\, t_{nj}=\frac 1 2  \sum_{n=l-1}^N  (n+\alpha+\beta+1)  {}^{(\af+1,\bt+1)\!}C_{l-1,n-1}^{(0,0)}\, t_{nj},
\end{equation}
for $1\le l\le N$ and $0\le j\le N.$  In view of \eqref{Dhj}, we can use   the first formula in 
\eqref{specaseAB} to  derive 
\begin{equation}\label{lastaddedB}
\begin{split}
{}^C\hspace*{-3pt}{\bs D}^{(\mu)}_{ij}&:=\big({}^C\hspace*{-3pt}{D}^{\mu}_-h_j\big)(x_{i})= I^{1-\mu}_- h_j' (x_{i})=
\sum_{l=1}^{N}   s_{lj} I^{1-\mu}_- P_{l-1}(x_i)\\
&=(1+x_{i})^{1-\mu} \sum_{l=1}^N    \frac{(l-1)!}{\Gamma(l+1-\mu)}\, s_{lj}\,  P_{l-1}^{(\mu-1,1-\mu)}(x_{i})\,.
\end{split}
\end{equation}
This ends the derivation of \eqref{DmuJGL}-\eqref{slj2}. 


We now derive  \eqref{tnjnj} for the LGL case.  Using the orthogonality of Legendre polynomials, integration by parts and the exactness of LGL quadrature (cf. \eqref{newquad}), we obtain from \eqref{Dhj} that  
\begin{equation}\label{ttildeformu}
\begin{split}
 s_{lj}&=\frac 1{\gamma_{l-1}}\int_{-1}^1 h_j'(x)P_{l-1}(x)\,dx
= \frac 1{\gamma_{l-1}}\bigg\{h_j(x)P_{l-1}(x)\big|_{-1}^1-\sum_{i=0}^N h_j(x_i)P_{l-1}'(x_i) \omega_i\bigg\}\\
&=\frac 1{\gamma_{l-1}}\big\{ \delta_{jN}+(-1)^l \delta_{j0}- \omega_j P_{l-1}'(x_j)\big\},
\end{split}
\end{equation}
where we used the properties: $h_j(x_i)=\delta_{ij}$  and $P_{l-1}(\pm 1)=(\pm 1)^{l-1}.$ 

\section{Proof of Theorem  \ref{newJacobimat}} \label{AppendixC}
\renewcommand{\theequation}{B.\arabic{equation}}
\setcounter{equation}{0}

Since $\hbar_j\in {\mathcal P}_{N-k},$ we can write 
\begin{equation}\label{ExpandharByLeg}
\hbar_{j}(x)=\displaystyle\sum^{N-k}_{n=0} \xi_{nj}P_{n}^{(\af,\bt)}(x)
=\sum^{N-k}_{l=0}\breve \xi_{lj} P^{(\mu-k,k-\mu)}_{l}(x),\;\;\; 1\le j\le N+1-k.
\end{equation}
As before, if one can work out $\{\xi_{nj}\},$  then by \eqref{uluk}-\eqref{ucoefrela}, 
\begin{align}\label{xilj}
\breve \xi_{lj}=\sum_{n=l}^{N-k} {}^{(\af,\bt)\!}C_{ln}^{(\mu-k,k-\mu)} \xi_{nj}.
\end{align}
As to be shown later, inserting  \eqref{ExpandharByLeg} into \eqref{newidentityA}, we can derive from 
\eqref{specaseAB2} with $\rho=k-\mu$ the desired formulas.  Thus, it remains to find  $\{\xi_{nj}\}$ in 
\eqref{ExpandharByLeg}.  We proceed separately for two cases. 

\vskip 4pt
(i) For $\mu\in (0,1),$ we obtain from  the orthogonality \eqref{jcbiorth}, the exactness of JGL quadrature  \eqref{newquad}, and the interpolating condition \eqref{hbari} that 
\begin{equation}\label{hjaj1}
\begin{split}
\xi_{nj}&=\frac{1}{\gamma_n^{(\af,\bt)}}\int^{1}_{-1} \hbar_{j}(x)P_{n}^{(\af,\bt)}(x)\omega^{(\af,\bt)}(x)dx=\frac{1}{\gamma_n^{(\af,\bt)}}\sum_{i=0}^N \hbar_j(x_i) P_{n}^{(\af,\bt)}(x_i)\omega_i \\
&=\frac{1}{\gamma_n^{(\af,\bt)}} \big\{\hbar_{j}(-1)P_{n}^{(\af,\bt)}(-1)\omega_0
+P_{n}^{(\af,\bt)}(x_j)\omega_j\big\},\quad  1\le j\le N.
\end{split}
\end{equation}
Now, we evaluate $\hbar_j(-1).$  Since $\{\hbar_j\}$ are associated with the interpolating points $\{x_j\}_{j=1}^N,$ which are zeros of $(1-x)DP_N^{(\af,\bt)}(x),$ we have the representation: 
\begin{equation}\label{InterPolyhbarJGL1}
\hbar_{j}(x)=\frac{(1-x)D P^{(\alpha,\beta)}_{N}(x)}
{(x-x_j) D\big\{ (1-x)D P^{(\alpha,\beta)}_{N}(x)\big\}\big|_{x=x_j}},
\quad  1\leq j\leq N.\end{equation}
Recall the Sturm-Liouville equation of Jacobi polynomials (cf. \cite[(4.2.1)]{szeg75}):
\begin{equation}\label{StLiuEQinJa}
-(1-x^2)D^2 P^{(\alpha,\beta)}_{N}(x)=\big\{\beta-\alpha-(\alpha+\beta+2)x\big\}D
P^{(\alpha,\beta)}_{N}(x)+\lambda^{(\alpha,\beta)}_N P^{(\alpha,\beta)}_{N}(x),
\end{equation}
where $\lambda_N^{(\af,\bt)}=N(N+\alpha+\beta+1).$ It follows from \eqref{StLiuEQinJa} that 
\begin{equation}\label{newvalues}
\begin{split}
& 2(\beta+1) DP_N^{(\alpha,\beta)}(-1)=-\lambda^{(\alpha,\beta)}_N  P_N^{(\alpha,\beta)}(-1), \;\;\;  2(\alpha+1) DP_N^{(\alpha,\beta)}(1)=\lambda^{(\alpha,\beta)}_N  P_N^{(\alpha,\beta)}(1), \\
&  -(1-x^2_j)D^2 P^{(\alpha,\beta)}_{N}(x_j)=\lambda^{(\alpha,\beta)}_N P^{(\alpha,\beta)}_{N}(x_j),\quad 1\le j\le N-1.
\end{split}
\end{equation}
 Using the property:   $(1-x_j)DP_N^{(\af,\bt)}(x_j)=0$ and \eqref{newvalues}, we compute from  \eqref{InterPolyhbarJGL1} that 
 \begin{equation}\label{hfu1}
 \hbar_j(-1)=-\frac {c_j} {\beta+1} \frac{P_N^{(\alpha,\beta)}(-1)}{P_N^{(\alpha,\beta)}(x_j)}, \;\;\; 1\le j\le N,
 \end{equation}
 where $c_j=1$ for $1\le j\le N-1,$ and $c_N=\alpha+1.$ Substituting \eqref{hfu1} into \eqref{hjaj1} yields  \eqref{hjaj10}.

Inserting  \eqref{ExpandharByLeg} into \eqref{newidentityA}, we derive from 
\eqref{specaseAB2} with $\rho=1-\mu$ that 
\begin{equation}\label{dQj22}
DQ_j^\mu(x)=\frac{1}{(1+x_j)^{1-\mu}}\sum_{l=0}^{N-1} \frac{\Gamma(l-\mu+2)}{l!}\, \breve\xi_{lj}\,  P_l(x), \quad 1\le j\le N.
\end{equation}
In view of $Q_j^\mu(-1)=0,$ a direct integration of \eqref{dQj22} leads to \eqref{dQj}.

\vskip 4pt 
(ii)  For $\mu\in (1,2),$ \eqref{hjaj1} reads 
\begin{equation}\label{hjaj12}
\begin{split}
\xi_{nj}
&=\frac{1}{\gamma_n^{(\af,\bt)}} \big\{\hbar_{j}(-1)P_{n}^{(\af,\bt)}(-1)\omega_0
+ \hbar_{j}(1)P_{n}^{(\af,\bt)}(1)\omega_N
+P_{n}^{(\af,\bt)}(x_j)\omega_j\big\},\quad  1\le j\le N-1.
\end{split}
\end{equation}
We need to evaluate $\hbar_j(\pm 1).$ Note that in this case, $\{\hbar_j\}$ are associated with the interior JGL points  $\{x_j\}_{j=1}^{N-1},$ which are zeros of $DP_N^{(\af,\bt)}(x),$ so we have 
\begin{equation}\label{Interhj}
\hbar_{j}(x)=\frac{DP^{(\alpha,\beta)}_{N}(x)}
{(x-x_j)D^2P^{(\alpha,\beta)}_{N}(x_j)},
\quad 1\leq j\leq N-1.
\end{equation}
Thus using \eqref{StLiuEQinJa}-\eqref{newvalues} leads to 
\begin{equation}\label{newhjf1}
\hbar_j(-1)=-\frac{1-x_j}{2(\bt+1)}\frac{P_N^{(\af,\bt)}(-1)}{P_N^{(\af,\bt)}(x_j)},\quad
\hbar_j(1)=-\frac{1+x_j}{2(\af+1)}\frac{P_N^{(\af,\bt)}(1)}{P_N^{(\af,\bt)}(x_j)}. 
\end{equation}
Substituting \eqref{newhjf1} into \eqref{hjaj12} yields  \eqref{hjaj102}.

Similar to case (i), inserting  \eqref{ExpandharByLeg} into \eqref{newidentityA}, we derive from 
\eqref{specaseAB2} with $\rho=2-\mu$ that 
\begin{equation}\label{dQj2pf}
D^2Q^\mu_{j}(x)= \frac{1}{(1+x_j)^{2-\mu}} \sum^{N-2}_{l=0}\frac{\Gamma(l-\mu+3)}{l!}
\; \breve \xi_{lj}\, P_{l}(x).
\end{equation}
Solving this equation with the boundary conditions:  $Q_j^\mu(\pm 1)=0,$ we obtain $\Phi_l$ in \eqref{Phisbas}   and the desired formula \eqref{dQj2}.

\section{Proof of Lemma  \ref{zerolmm}} \label{AppendixC0}
\renewcommand{\theequation}{C.\arabic{equation}}
\setcounter{equation}{0}

We carry out the proof by directly verifying that $u(x)$ in \eqref{soluform} is the desired polynomial solution.  It is evident that for any $f\in {}_0{\mathcal P}_N,$ we can  write 
\begin{equation}\label{newpfA}
f(x)=\sum_{n=0}^{N-1} \hat f_n\,(1+x) P_n^{(\mu,1-\mu)}(x), 
\end{equation}
where the coefficients $\{\hat f_n\}$ are uniquely determined.   Using  \eqref{newbatemanam} with 
$\rho=\mu, \alpha=\mu$ and $\beta=1-\mu,$  leads to 
\begin{equation}\label{newpfB}
u(x)=I_-^\mu \big\{(1+x)^{-\mu} f(x)\big\}= \sum_{n=0}^{N-1} \frac{\Gamma(n+2-\mu)}{(n+1)!} \hat f_n\,(1+x) P_n^{(0,1)}(x), 
\end{equation}
which implies $u\in {}_0{\mathcal P}_N.$ 
Recall that  ${^R}\hspace*{-2pt} {\widehat D}^{\mu}_-=(1+x)^{\mu} {^R}\hspace*{-2pt}D_-^\mu.$ Thus, acting  ${^R}\hspace*{-2pt} {\widehat D}^{\mu}_-$ on both sides of  \eqref{newpfB}, we obtain from  \eqref{newbatemanam3s} and \eqref{newpfA} immediately that 
\begin{equation}\label{newpfC}
\begin{split}
{^R}\hspace*{-2pt} {\widehat D}^{\mu}_-u(x) &=\sum_{n=0}^{N-1} \frac{\Gamma(n+2-\mu)}{(n+1)!} \hat f_n\, {^R}\hspace*{-2pt} {\widehat D}^{\mu}_-\big\{(1+x) P_n^{(0,1)}(x)\big\}\\&=\sum_{n=0}^{N-1} \hat f_n\,(1+x) P_n^{(\mu,1-\mu)}(x)=f(x). 
\end{split}
\end{equation}
Therefore, $u(x)$ in \eqref{soluform}  verifies \eqref{newsolu}. The uniqueness follows from  ${^R}\hspace*{-2pt} {\widehat D}^{\mu}_-u(x)=0$ implying 
$u(x)=0.$

We now turn to \eqref{Rrhoeqn}.  The above verification shows that if $f(-1)=0,$ i.e.,  
${^R}\hspace*{-2pt} {\widehat D}^{\rho}_-u(-1)=0,$ then $u(-1)=0.$ Hence, it suffices to show if 
$u(-1)=0,$ then ${^R}\hspace*{-2pt} {\widehat D}^{\rho}_-u(-1)=0.$  For this purpose, we expand 
\begin{equation}\label{newpfD}
u(x)=\sum_{n=0}^{N-1} \hat u_n\, (1+x)P_n^{(0,1)}(x),
\end{equation}
where $\{\hat u_n\}$ can be uniquely determined. Like the derivation of \eqref{newpfC}, 
 acting  ${^R}\hspace*{-2pt} {\widehat D}^{\mu}_-$ and using   \eqref{newbatemanam3s}, we obtain
\begin{equation}\label{newpfE}
{^R}\hspace*{-2pt} {\widehat D}^{\mu}_-u(x) =\sum_{n=0}^{N-1}  \frac {(n+1)!} {\Gamma(n+2-\mu)} \,\hat u_n\,(1+x) P_n^{(\mu,1-\mu)}(x),
\end{equation}
which implies ${^R}\hspace*{-2pt} {\widehat D}^{\rho}_-u(-1)=0.$

\section{Proof of Theorem  \ref{invRLmu01} } \label{Appendixnew}
\renewcommand{\theequation}{D.\arabic{equation}}
\setcounter{equation}{0}

We intend to use the compact identity deduced  from \eqref{newbatemanam3s}, that is, 
\begin{equation}\label{RLmu01}
{}^R\hspace*{-2pt}\widehat D_-^\mu \big\{ P_n(x)\big\}
=\frac{n!} {\Gamma(n-\mu+1)} P_n^{(\mu,-\mu)}(x),\quad n\ge 0,\quad \mu\in(0,1).  
\end{equation}
 This inspires us to expand $\{h_j\}$ (resp. $\widehat Q_j^\mu$) in terms of $\{P_l^{(\mu,-\mu)}\}$  
 (resp. $\{P_l\}$). Following \eqref{matrxA}-\eqref{exastD}, we have 
 \begin{equation}\label{exastD00}
h_j(x)=\sum_{l=0}^N \hat t_{lj} P_l^{(\mu,-\mu)}(x),\quad \hat t_{lj}=
\sum_{n=l}^N {}^{(\af,\bt)\!}C_{ln}^{(\mu,-\mu)} t_{nj},
\end{equation}
and 
\begin{equation}\label{QjLeg}
\widehat Q_j^\mu(x)=\sum_{l=0}^N \hat q_{lj} P_l(x),\quad 0\le j\le N. 
\end{equation}
Inserting \eqref{exastD00}-\eqref{QjLeg} into \eqref{identiyRLj}, we obtain from \eqref{RLmu01} immediately that for $ 0\le l\le N,$
\begin{equation}\label{qljformu}
\hat q_{lj}=\frac{\Gamma(l-\mu+1)} {l!}\, \hat t_{lj}, \quad 1\le j\le N,\quad   \hat q_{l0}=\frac{\Gamma(l-\mu+1)} {l!} \xi\, \hat t_{l0}.
\end{equation}
Thus, it remains to determine the constant $\xi$.  Setting  $\widetilde  Q_0^\mu(x)=\widehat Q_0^\mu(x)-1,$  we have $\widetilde  Q_0^\mu(-1)=0.$ Using \eqref{Rrhoeqn},   the formula \eqref{rlpower}, and definition \eqref{eqnasd}, we obtain from \eqref{identiyRLj} that 
\begin{equation}\label{identiyRL0J}
0=\big({}^R\hspace*{-2pt}\widehat{ D}_-^{\mu}\,\widetilde Q_0^\mu\big)(-1)=\xi-
\big({}^R\hspace*{-2pt}\widehat{ D}_-^{\mu}\,1\big)\big|_{x=1}=\xi-\frac{1}{\Gamma(1-\mu)},\;\; {\rm so}\;\; \xi=\frac{1}{\Gamma(1-\mu)}. 
\end{equation}
This ends the proof.

\section{Proof of Lemma  \ref{Jcbiexplmm} } \label{AppendixD}
\renewcommand{\theequation}{E.\arabic{equation}}
\setcounter{equation}{0}
 
  We first derive the coefficients in \eqref{newconstA}-\eqref{newconstBj}. 
By  the orthogonality \eqref{jcbiorth}, the exactness of JGL quadrature  \eqref{newquad}, and the interpolating condition \eqref{hathj}, we have 
  \begin{equation}\label{hjaj1A0}
\begin{split}
\varrho_{nj}&=\frac{1}{\gamma_n^{(\af,\bt)}}\int^{1}_{-1} \hat h_{j}(x)P_{n}^{(\af,\bt)}(x)\omega^{(\af,\bt)}(x)dx=\frac{1}{\gamma_n^{(\af,\bt)}}\sum_{i=0}^N \hat h_j(x_i) P_{n}^{(\af,\bt)}(x_i)\omega_i \\
&=\frac{1}{\gamma_n^{(\af,\bt)}} \big\{P_{n}^{(\af,\bt)}(x_j)\omega_j+\hat h_{j}(1)P_{n}^{(\af,\bt)}(1)\omega_N\big\},\quad  0\le n,j\le N-1.
\end{split}
\end{equation}
 Since $\{\hat h_j\}$ are associated with the JGL points $\{x_j\}_{j=0}^{N-1},$ which are zeros of $(1+x)DP_N^{(\af,\bt)}(x),$ we have the representation: 
\begin{equation}\label{InterGL1}
\hat h_{j}(x)=\frac{(1+x)D P^{(\alpha,\beta)}_{N}(x)}
{(x-x_j) D\big\{ (1+x)D P^{(\alpha,\beta)}_{N}(x)\big\}\big|_{x=x_j}},
\quad  0\leq j\leq N-1.
\end{equation}
A direct calculation  using \eqref{newvalues} leads to 
\begin{equation}\label{asphas}
\hat h_0(1)=-\frac{\beta+1}{\alpha+1} \frac{P_N^{(\af,\bt)}(1)} {P_N^{(\af,\bt)}(-1)},\quad  \hat h_j(1)
=-\frac{1}{\alpha+1} \frac{P_N^{(\af,\bt)}(1)} {P_N^{(\af,\bt)}(x_j)},\;\;\; 1\le j\le N-1.  
\end{equation} 
  Thus, we obtain \eqref{newconstA}-\eqref{newconstBj} by inserting them into \eqref{hjaj1A0}.

 Thanks to 
  \begin{equation}\label{hathjexp22}
  \hat h_j(x)=\sum_{n=0}^{N-1}\varrho_{nj}\, P_n^{(\af,\bt)}(x)=\sum_{l=0}^{N-1}\tilde \varrho_{lj}\, P_l^{(\mu,1-\mu)}(x),\quad 0\le j\le N-1, 
  \end{equation} 
  we solve the connection problem and obtain from \eqref{ucoefrela}-\eqref{constAB} the formula  
  \eqref{connectjj}.

It remains to derive \eqref{backformula}.  Applying the three-term recurrence relation \eqref{ThreeTermExpress} to the last expansion in \eqref{hathjexp}, we obtain the connection  
\begin{equation}\label{bsTsj00}
 \bs T \bs {\hat \varrho_j}=\bs {\tilde \varrho_j}, \;\;\; \bs {\hat \varrho_j}=(\hat \varrho_{0j},\cdots,\hat \varrho_{N-1,j} )^t,\;\; \bs {\tilde \varrho_j}=(\tilde \varrho_{0j},\cdots,\tilde \varrho_{N-1,j})^t,
\end{equation}
where 
 $\bs T$ is an upper triangular matrix with only nonzero entries on diagonal and two upper diagonals: 
\begin{equation}\label{transeqn00}
\bs T_{00}=1,\;\; \bs T_{ii}=a_{i};\;\;\; \bs T_{i,i+1}=b_{i}+1;\quad    \bs T_{i,i+2}=c_i.
\end{equation}
Solving the linear system by backward substitution leads to  \eqref{backformula}.

\section{Proof of Theorem  \ref{invRLmu12}} \label{AppendixE}
\renewcommand{\theequation}{F.\arabic{equation}}
\setcounter{equation}{0}
 
We first use  Lemma \ref{zerolmm} to solve   \eqref{identiyRL02}-\eqref{identiyRLN2} and find the expressions of the constants therein.  
 It's more convenient to reformulate \eqref{identiyRL02} as: find $\widehat Q_0^\mu(x)=\breve Q_0^\mu(x)+1$ such that  
\begin{equation}\label{eqnAs}
\big({}^R\hspace*{-2pt}\widehat{ D}_-^{\mu}\,\breve Q_0^\mu\big)(x)=
\Big(\tau_0 -\frac 1{\Gamma(1-\mu)} \Big)
  (1+x) \hat h_0(x) +\frac 1{\Gamma(1-\mu)} \big(\hat h_0(x)-1), \quad    \breve Q_0^\mu(1)=-1,
\end{equation}
where we used \eqref{rlpower}, \eqref{fpsdmod} and \eqref{Rrhoeqn} to derive 
\begin{equation}\label{QN1A}
{}^R\hspace*{-2pt}\widehat{ D}_-^{\mu}1=\frac{1}{\Gamma(1-\mu)},\quad \kappa_0=\tau_0-\frac 1 {\Gamma(1-\mu)}.
\end{equation}
Using Lemma \ref{zerolmm} and  \eqref{intformu}, we obtain 
\begin{equation}\label{Q0solu}
\breve Q_0^\mu(x)= \Big(\tau_0 -\frac 1{\Gamma(1-\mu)} \Big)
I_-^\mu \big\{(1+x)^{1-\mu} \hat h_0(x)\big\}+ \frac 1{\Gamma(1-\mu)} I_-^\mu \big\{(1+x)^{-\mu} 
(\hat h_0(x)-1)\big\}. 
\end{equation} 
As $\breve Q_0^\mu(1)=-1$, we have 
 \begin{align}\label{tau0value}
\Gamma(1-\mu)  \,\tau_0=1-\frac {I_-^\mu \big\{(1+x)^{-\mu}  (\hat h_0(x)-1)\big\}\big|_{x=1}+\Gamma(1-\mu)} {I_-^\mu \big\{(1+x)^{1-\mu} \hat h_0(x)\big\}\big|_{x=1}}\,.
 \end{align}
%
Following the same argument, we derive 
\begin{align}
 &\widehat Q_N^\mu(x)= \tau_N\, I_-^\mu \big\{(1+x)^{1-\mu} \hat h_0(x)\big\}, \quad \tau_N=\frac 1 {I_-^\mu \big\{(1+x)^{1-\mu} \hat h_0(x)\big\}\big|_{x=1}},\label{QNsolu}
 \end{align}
 and for $1\le j\le N-1,$
 \begin{align}
 &\widehat Q_j^\mu(x)= \frac {1}{x_j+\tau_j}\Big(\tau_j I_-^\mu \big\{(1+x)^{-\mu} \hat h_j(x)\big\} + 
 I_-^\mu \big\{(1+x)^{-\mu}x \hat h_j(x)\big\}\Big), \label{Qjsolu}\\
 &\tau_j=-\frac {I_-^\mu \big\{(1+x)^{-\mu} x \hat h_j(x)\big\}\big|_{x=1}}  
{ I_-^\mu \big\{(1+x)^{-\mu} \hat h_j(x)\big\}\big|_{x=1}}. \label{taujsolu}
 \end{align}

 We now evaluate fractional integrals of $\hat h_j$. 
 Using the last two expansions with $j=0$ in \eqref{hathjexp}, and the identity   \eqref{newbatemanam} with 
$\rho=\mu, \alpha=\mu$ and $\beta=1-\mu$, we obtain
\begin{equation}\label{newpfBJ}
\begin{split}
& I_-^\mu \big\{(1+x)^{1-\mu} \hat h_0(x)\big\}= \sum_{l=0}^{N-1} \frac{\Gamma(l+2-\mu)}{(l+1)!} 
\tilde  \varrho_{l0}\,(1+x) P_l^{(0,1)}(x), \\
& I_-^\mu \big\{(1+x)^{-\mu} (\hat h_0(x)-1)\big\}= \sum_{l=0}^{N-2} \frac{\Gamma(l+2-\mu)}{(l+1)!} 
\hat  \varrho_{l+1,0}\,(1+x)P_l^{(0,1)}(x). 
\end{split}
\end{equation}
Noting that $P_n^{(0,1)}(1)=1$ (cf.  \cite{szeg75}),  we obtain from  \eqref{tau0value} and \eqref{newpfBJ} the value of $\tau_0$ in 
\eqref{Q0solunewm}, and the expression of $\widehat Q_0^\mu(x)$ follows from \eqref{Q0solu} immediately.

Similarly, we obtain from \eqref{QNsolu} and \eqref{newpfBJ}  the expression of $\widehat Q_N^\mu(x)$ in \eqref{QNformula}.

We now turn to  $\widehat Q_j^\mu(x)$ with $1\le j\le N-1.$ 
Once again,  using  \eqref{newbatemanam} (with 
$\rho=\mu, \alpha=\mu$ and $\beta=1-\mu$)  and \eqref{hathjexp},  leads to 
\begin{equation}\label{newpfBJj}
\begin{split}
& I_-^\mu \big\{(1+x)^{-\mu} \hat h_j(x)\big\}= (1+x) \sum_{l=0}^{N-2} \frac{\Gamma(l+2-\mu)}{(l+1)!} 
\hat  \varrho_{l+1,j}\, P_l^{(0,1)}(x),  \\
&I_-^\mu \big\{(1+x)^{-\mu} \hat h_j(x)\big\}\big|_{x=1}= 2\sum_{l=0}^{N-2} \frac{\Gamma(l+2-\mu)}{(l+1)!} 
\hat  \varrho_{l+1,j}\,,
\end{split}
\end{equation}
where we used $P_l^{(0,1)}(1)=1.$
Moreover, by  \eqref{newbatemanam},  \eqref{hathjexp} and  \eqref{ThreeTermExpress}-\eqref{3termcoef}, 
\begin{equation}\label{newpfBJj2}
\begin{split}
& I_-^\mu \big\{(1+x)^{-\mu} x\hat h_j(x)\big\}= \sum_{l=0}^{N-2} 
\hat  \varrho_{l+1,j}\, I_-^\mu \big\{(1+x)^{1-\mu} x P_l^{(\mu,1-\mu)}(x)\big\}=(1+x)\times  \\
& \;\; \sum_{l=0}^{N-2} \frac{\Gamma(l+2-\mu)}{(l+1)!} 
\hat  \varrho_{l+1,j}\,\bigg\{\frac{l+2-\mu}{l+2} a_{l+1} P_{l+1}^{(0,1)}+b_l P_{l}^{(0,1)}+  \frac{l+1}{l+1-\mu} c_{l-1}P_{l-1}^{(0,1)}\bigg\}(x),
\end{split}
\end{equation}
where $c_{-1}=0.$ Using the property: $P_{l-1}^{(0,1)}(1)=1$ and \eqref{3termcoef}, we find from a direct calculation and \eqref{newpfBJj} that 
\begin{equation}\label{value1}
\begin{split}
I_-^\mu \big\{(1+x)^{-\mu} x\hat h_j(x)\big\}\big|_{x=1}&= 2(1-\mu)\Gamma(2-\mu) \hat \varrho_{1j}+ 2 \sum_{l=1}^{N-2} \frac{\Gamma(l+2-\mu)}{(l+1)!} 
\hat  \varrho_{l+1,j}\\
&=I_-^\mu \big\{(1+x)^{-\mu} \hat h_j(x)\big\}\big|_{x=1}- 2\mu\,\Gamma(2-\mu) \hat \varrho_{1j}.
\end{split}
\end{equation}
Inserting \eqref{newpfBJj}-\eqref{value1} into \eqref{Qjsolu}-\eqref{taujsolu}, we derive the forumlas \eqref{Qjsoluform}-\eqref{tauj}.

\end{appendix}

\bibliography{RefFractional,ref,Refcol}

 \end{document}